\documentclass[reqno]{amsart}
\usepackage{amsmath}
\usepackage{paralist}
\usepackage{amssymb}
\usepackage{amsthm}
\usepackage{graphicx}
\usepackage{tikz}
\usepackage{float}
\usepackage[colorlinks=true]{hyperref}
\hypersetup{urlcolor=blue, citecolor=red}
\usepackage{hyperref}
\textheight=8.2 true in
\textwidth=5.0 true in
\topmargin 30pt
\setcounter{page}{1}

\newtheorem{theorem}{Theorem}[section]

\newtheorem{lemma}[theorem]{Lemma}
\newtheorem{proposition}[theorem]{Proposition}
\theoremstyle{definition}

\newtheorem{remark}{Remark}[section]

\numberwithin{equation}{section}

%    Absolute value notation

%    Blank box placeholder for figures (to avoid requiring any
%    particular graphics capabilities for printing this document).

\begin{document}
\title[Normalized solutions to Kirchhoff equation]
{Normalized solutions to Kirchhoff equation with the Sobolev critical exponent in high dimensional spaces}
\author[Ruikang Lu, Qilin Xie and Jianshe Yu]{}
%\subjclass[2010]{Primary: 35J60; Secondary: 47J30, 35J20.}

\keywords{Kirchhoff equation; variational methods; high dimensional spaces; Palais-Smale condition}
\maketitle
\medskip
{\footnotesize
\medskip
\centerline{\scshape Ruikang Lu, Qilin Xie\footnote{E-mail address: \ xieql@gdut.edu.cn}}
\medskip
{\footnotesize
\centerline{School of Mathematics and Statistics,}
\centerline{Guangdong University of Technology, Guangzhou, Guangdong 510520, China} }
\medskip
\centerline{and}
\medskip
\centerline{\scshape Jianshe Yu\footnote{E-mail address: \ jsyu@gzhu.edu.cn.}\footnote{This work was supported by National Natural Science Foundation of China (No: 12331017, 12171110)}}

{\footnotesize
\centerline{Guangzhou Center for Applied Mathematics,}
\centerline{Guangzhou University, Guangzhou, Guangdong 510006, China} }
\maketitle

%\bigskip
%\centerline{(Communicated by the associate editor name)}

\begin{abstract}
	The following well-known Kirchhoff equation with the Sobolev critical exponent has been extensively studied,
	\begin{equation*}
		-\Big(a+b\int_{\mathbb R^N} | \nabla u|^2dx\Big) \Delta u+\lambda u=\mu |u|^{q-2}u+|u|^{2^*-2}u \ \ {\rm in}\ \ \mathbb{R}^N, \ \ N\geq4,	
        \end{equation*}
having prescribed mass $\int_{\mathbb R^N}|u|^2dx=c$,
where $a$, $c$ are two positive constants, $b,\mu$ are two parameters, $\lambda$ appears as a real Lagrange multiplier and $2<q<2^*$,  $2^*$ is the Sobolev critical exponent. Firstly, for the special case $\mu=0$ and $N\geq4$, the above equation reduces to a pure critical Kirchhoff equation, we obtain a complete conclusion including the existence, nonexistence and multiplicity of the normalized solutions by the variational methods. Secondly, when $\mu>0$, $N\geq5$ and $2<q<2+\frac{4}{N}$, we investigate the existence of the positive normalize solution under suitable assumptions on  parameter $b$ and mass $c$. To the best of our knowledge, it is the first time to consider the above case, which is a more complicated case not only the difficulties on checking the Palais-Smale condition, but also the constraint functional requesting the intricate concave-convex structure. Lastly, when $\mu>0$ and $N=4$, we obtain a local minimizer solution and a mountain pass solution under explicit conditions on $b$ and $c$. It is worth noting that the second solution is obtained by introducing a new functional to establish a threshold for the mountain pass level, which is the key step for the fulfillment of the Palais-Smale condition.  This paper provides a refinement and extension of the results of the normalized solutions for Kirchhoff type problem in high-dimensional spaces.
\end{abstract}
{\bf 2020 Mathematics Subject Classification.} Primary: 35J60; Secondary: 47J30, 35J20.

\section{Introduction}
\setcounter{section}{1}
\setcounter{equation}{0}

 As is well-known, the Kirchhoff type equation appears as models of some physical phenomena. For example, it is related to the stationary analogue of the equation,
\begin{equation*}
	\rho\frac{\partial ^2 u}{\partial t^2}-(\frac{P_0}{h}+\frac{E}{2L}\int_{0}^{L}|\frac{\partial u}{\partial x}|^2dx)\frac{\partial ^2 u}{\partial x^2}=0,
\end{equation*}
where $u$ is the lateral displacement at $x$ and $t$, $E$ is the Young modulus, $\rho$ is the mass density, $h$ is the cross-section area, $L$ is the length, $P_0$ is the initial axial tension. For more background, refer to \cite{al2014bending, kirchhoffvorlesungen}. Compared with the Schr\"odinger equation, the Kirchhoff equation contains a nonlocal term $(\int_{\mathbb R^N}|\nabla u|^2dx) \Delta u$, which makes it no longer a pointwise identity. Thus, the nonlocal problem causes some mathematical difficulties and makes the search for solutions to the Kirchhoff equation more challenging and interesting. However, after Lions \cite{lions1978some} introduced an abstract functional analysis framework for the equation, $$u_{tt}-\Big(a+b\int_{\Omega} | \nabla u|^2dx\Big) \Delta u=f(u)\ \ {\rm in}\ \mathbb R^+\times\Omega,$$the Kirchhoff equation has attracted much attention from mathematicians. For the relevant results, see \cite{chipot1997some,correa2005class}.

A special form of the following Kirchhoff type equation,
\begin{equation}\label{p1.3}
    -\Big(a+b\int_{\mathbb R^N} | \nabla u|^2dx\Big) \Delta u+\lambda u=f(u)\ \ {\rm in}\ \ \mathbb{R}^N,
\end{equation}
has been widely studied in the past several decades, where $a,b$ are two positive constants, $\lambda$ is a fixed constant or  an unknown parameter, $N\geq 1$ and $f\in C(\mathbb R,\mathbb R)$. On the one hand, the equation \eqref{p1.3} with a fixed constant $\lambda$ is called fixed frequency problem. In this case, one can obtain the solutions to equation \eqref{p1.3} by finding the critical points of the related energy functional without constraints. Many results have been established in this case, and the reader may refer to \cite{chen2011nehari,figueiredo2014existence,he2012existence,xie2016bound,Xie2023existence} and the references therein for related work.

On the other hand, inspired by some physical phenomena, physicists are more interested in solutions to equation \eqref{p1.3} with a prescribed mass, which are commonly referred to as normalized solutions. In this case, $\lambda$ is an unknown parameter and appears as a Lagrange multiplier with respect to the constraint,
$$\bar{S}_c:=\{u\in H^1(\mathbb R^N): \int_{\mathbb R^N}|u|^2dx=c\} \ \ {\rm\  or }
\ \ S_c:=\{u\in H_{rad}^1(\mathbb R^N): \int_{\mathbb R^N}|u|^2dx=c\},$$
 where $c$ is a fixed constant. To obtain the normalized solutions to equation \eqref{p1.3}, we can find the critical points of the energy functional on the constraint
$\bar{S}_c$. Now, let us review the existing work on normalized solutions to equation \eqref{p1.3}.

When $a=1$, $b=0$ and $f(u)=\mu |u|^{q-2}u+|u|^{p-2}u$, equation \eqref{p1.3} reduces to the Schr\"odinger equation,
\begin{equation}\label{p1.4}
		-\Delta u+\lambda u=\mu |u|^{q-2}u+|u|^{p-2}u\ \  {\rm in}\ \ \mathbb{R}^N,
\end{equation}
where $u$ satisfies $\int_{\mathbb R^N}|u|^2dx=c$.  Recently, there are many important works on normalized solutions to this Schr\"odinger equation. For $N\geq3$, Soave \cite{soave2020normalized} first studied equation \eqref{p1.4} for $2<q<2+\frac{4}{N}$ and $2<q\leq2+\frac{4}{N}\leq p<2^*$, where $2+\frac{4}{N}$ is the $L^2$-critical exponent for the Schr\"odinger type equation, which is always strictly smaller than the Sobolev critical exponent  $2^*$ . The author proved the existence and asymptotic properties of normalized ground state solutions, as well as stability and instability results. In the Sobolev critical case \cite{soave2020sobolev}, the existence and asymptotic properties of normalized ground state solutions were established for $2<q<2+\frac{4}{N}$, $q=2+\frac{4}{N}$, $2+\frac{4}{N}<q<2^*$, respectively. It is worth noting that Jeanjean et al. \cite{jeanjean2022multiple} solved an open solution raised by Soave \cite{soave2020sobolev} and obtained a second normalized solution of equation \eqref{p1.4} with $N\geq4$ for $2<q<2+\frac{4}{N}$. Moreover, Wei and Wu \cite{wei2022normalized} further extended the results and obtained a second normalized solution of equation \eqref{p1.4} with $N=3$ for $2<q<2+\frac{4}{N}$.

When $a,b>0$ and $f(u)=|u|^{q-2}u$, equation \eqref{p1.3} becomes the following equation,
\begin{equation}\label{p1.5}
    -\Big(a+b\int_{\mathbb R^N} | \nabla u|^2dx\Big) \Delta u+\lambda u=|u|^{q-2}u \ \ {\rm in}\ \ \mathbb{R}^N,
\end{equation}
where $u$ satisfies $\int_{\mathbb R^N}|u|^2dx=c$. There are many results of equation \eqref{p1.5} in the past several decades. Ye \cite{ye2015sharp} pointed out that $2+\frac{8}{N}$ is the $L^2$-critical exponent for this equation, which plays a dominant role in the study of normalized solutions to equation \eqref{p1.5}. This exponent determines whether the constrained functional $I$ remains bounded from below on $\bar S_c$. By employing scaling techniques and the concentration-compactness principle, Ye \cite{ye2015existence,ye2015sharp,ye2016mass} proved the existence, nonexistence and concentration behavior of normalized solutions in three subcases $2<q<2+\frac{8}{N}$, $q=2+\frac{8}{N}$ and $2+\frac{8}{N}<q<2^*$. Additionally, Zeng and Zhang \cite{zeng2017existence} also established the existence and uniqueness of normalized solutions in this case.

If $f(u)=\mu |u|^{q-2}u+|u|^{p-2}u$, equation \eqref{p1.3} becomes the following equation,
\begin{equation}\label{p1.6}
    -\Big(a+b\int_{\mathbb R^N} | \nabla u|^2dx\Big) \Delta u+\lambda u=\mu |u|^{q-2}u+|u|^{p-2}u \ \ {\rm in}\ \ \mathbb{R}^N,
\end{equation}
where $u$ satisfies $\int_{\mathbb R^N}|u|^2dx=c$. For $\mu>0$ and $N=3$, Li et al. \cite{li2022normalized} considered the existence and asymptotic properties of normalized solutions. In the case $2<q<\frac{10}{3}$ and $\frac{14}{3}<p<6$, they obtained a local minimum solution and a mountain pass solution, respectively, by using the Ekeland's variational principle and constructing a special Palais-Smale sequence. Besides, in the Sobolev case, Li
 et al. \cite{li2022normalized} only obtained a local minimum solution to equation \eqref{p1.6} for $2<q<\frac{10}{3}$. However, it is worth to point that the energy functional also has a convex-concave structure in this case, which naturally guides us to consider the existence of the second solution. Recently, using different technical approaches, Feng et al. \cite{feng2023normalized} and Chen et al. \cite{chen2024normalized} obtained the second solution, which is a mountain pass solution. In \cite{hu2023normalized}, Hu and Mao proved the existence and nonexistence of normalized solutions by using constraint minimization and concentration compactness principle, which partially extend the results of \cite{li2022normalized} and \cite{soave2020normalized}. The case that $\mu<0$ also has been considered by the authors in \cite{carriao2022normalized}.  For more results,  the reader may refer to \cite{xu2024existence,xu2024multiplicity}.

As we know, the $L^2$-critical exponent $2+\frac{8}{N}$ and the Sobolev critical exponent $2^*=\frac{2N}{N-2}$ play a special role on the study of the Kirchhoff equation. The former serves as the threshold for the existence of normalized solutions, while the latter determines whether the Kirchhoff equation has a variational framework. It is easy to see that $2+\frac{8}{N}<\frac{2N}{N-2}$ for $N\leq3$, which explains why the previously mentioned works on the normalized solutions of the Kirchhoff equation focused on dimensions $N\leq3$. Therefore, there are few results on the normalized solutions of equation \eqref{p1.3} for $N\geq4$. When $1\leq N\leq 4$ and $f(u)=|u|^{p-2}u$ with $2<p<2^*$, Guo et al. \cite{guo2018blow} showed the existence and blow-up behavior of normalized solutions to equation \eqref{p1.3}
with trapping potential or constant vanishing potential. When $2^*=4$ for $N=4$, and  $f(u)=|u|^{p-2}u$ with $2<p\leq4$, Li et al. \cite{li2019existence} proved the existence and nonexistence of normalized minimizer and showed the delicate difference brought by the different potential cases.

In the present paper, we investigate the following Kirchhoff equation with a Sobolev critical exponent in high dimensional spaces,
\begin{equation}\label{p1.1}
		-\Big(a+b\int_{\mathbb R^N} | \nabla u|^2dx\Big) \Delta u+\lambda u=\mu |u|^{q-2}u+|u|^{2^*-2}u \ \ {\rm in}\ \ \mathbb{R}^N,	\ \ \  N\geq4,
\end{equation}
having prescribed mass $\int_{\mathbb R^N}|u|^2dx=c,$
where $a$, $c$ are two positive constants, $b,\mu$ are two parameters, $\lambda$ appears as a real Lagrange multiplier and $2<q<2^*$. From a variational point of view, the normalized solutions to equation \eqref{p1.1} can be obtained by finding the critical points of the following  energy functional $I_{b,\mu}:H^1(\mathbb{R}^N)\to \mathbb{R}$ defined by
\begin{equation}\label{I}
    I_{b,\mu}(u)=\frac{a}{2}|\nabla u|^2_2+\frac{b}{4}|\nabla u|^4_2-\frac{\mu}{q}|u|^q_q-\frac{1}{2^*}|u|^{2^*}_{2^*},
\end{equation}
on the constraint $\bar{S}_c$ or $S_c.$

Firstly, we consider equation \eqref{p1.1} with $N=4$,  it is great interest to see that $2+\frac{8}{N}=\frac{2N}{N-2}=4$, which is referred to as the mass-energy doubly critical phenomenon, both the mass critical exponent and the Sobolev critical exponent. This phenomenon leads to new difficulties  in the study of equation \eqref{p1.1}. Specifically, the competition between the nonlocal term and the Sobolev critical term in the functional \eqref{I} makes the analysis of its geometric structure and the proof of boundedness for Palais-Smale sequences more challenging. Moreover, if $q>2+\frac{8}{N}=4$, then equation \eqref{p1.1} will no longer have a variational framework. Therefore, $2+\frac{4}{N}=3$ is often considered the critical threshold instead of $2+\frac{8}{N}=4$ in the study of equation \eqref{p1.1}. Recalling the best Sobolev constant defined in \eqref{S}, we obtain the relationship between the nonlocal term and the Sobolev term of functional \eqref{I} as follows,
$$S^{-2}|\nabla u|_2^4\geq |u|_4^4,$$ 
which motivates us to establish  a connection between $b$ and $S^{-2}$ to investigate the geometric structure of functional \eqref{I}. For $0<b<S^{-2}$ and $\mu>0$, it follows from \eqref{I}, \eqref{S} and \eqref{G-N} that for all $u\in \bar{S}_c$, \begin{equation*}
            I_{b,\mu}(u)\geq\frac{a}{2}|\nabla u|_2^2-\frac{1-bS^2}{4S^2}|\nabla u|_2^4-\frac{\mu}{q}C_q^qc^{\frac{4-q}{2}}|\nabla u|_2^{2q-4}.
    \end{equation*} 
When $2<q<3$, $I_{b,\mu}(u)$ has  a convex-concave structure, suggesting that equation \eqref{p1.1} may have two normalized solutions: one is a local minimizer solution, and the other one is a mountain pass solution. Recently, Kong and Chen \cite{kong2023normalized} noted this case and prove the existence of the first solution under suitable conditions on $\mu$ and $c$, but the second solution remains unproven. Subsequently, using some energy estimations, Fang et al. \cite{fang2024normalized} obtained a mountain pass solution for $q=3$ and $\mu$ small enough. Additionally, Zhang et al. \cite{zhang2022normalized} considered equation \eqref{p1.1} with $\mu=1$ and $b>S^{-2}$, investigating the existence and nonexistence of normalized solutions in three subcases $2<q<3$, $q=3$ and $3<q<4$.

It is worth mentioning that there is little work on normalized solutions to equation \eqref{p1.1} for $N\geq5$. This is primarily due to the fact that $2^*<4$,  which makes the nonlocal term have a stronger impact on the energy functional than the Sobolev critical term. Consequently, some technical compactness arguments developed for $N\leq4$ are not applicable to $N\geq5$. In the case $N \geq5$,  the authors in \cite{zhang2022normalized} proved the existence and nonexistence of normalized solutions to equation \eqref{p1.1} with $\mu=1$. More precisely, their Theorem 1.5 obtained a normalized solution of equation \eqref{p1.1} as to the case of  $q$ in different specified intervals $(2,2^*)$  if  $a$, $b$ and $S$ satisfy the following crucial condition, \
\begin{equation}\tag{$b^*$}\label{b*}
(\frac{2a}{4-2^*})^{\frac{4-2^*}{2}}(\frac{2b}{2^*-2})^{\frac{2^*-2}{2}}>
\frac{1}{S^{\frac{2^*}{2}}}.
\end{equation}
Indeed, the condition \eqref{b*} equivalents to
\begin{equation}\label{b_0}
    b>b_0:=\frac{2}{N-2}
			\left(\frac{N-4}{a(N-2)}\right)^{\frac{N-4}{2}}S^{-\frac{N}{2}}.
\end{equation}
This vital constant $b_0$, which is given in \cite{xie2022study},  plays an significant role in structure of the energy functional and the limit equation. Based on the analysis of the preceding work, an intriguing question arises: For $b<b_0$, does equation \eqref{p1.1} have a normalized solution for $q\in(2,2^*)$,  $N\geq5$ and $\mu>0$ ? This question is one of the main aim of this paper. Before addressing it, we first study the existence, nonexistence and multiplicity of normalized solutions to equation \eqref{p1.1} with $\mu=0$. Subsequently, inspired by \cite{feng2023normalized,li2022normalized, Qizou2022, soave2020sobolev,xulin2024}, we establish partial results for the aforementioned question. Moreover, we prove the nonexistence results of equation \eqref{p1.1} with $\mu<0$. In the case $N=4$, this paper also address the open question raised in \cite{kong2023normalized}: Is there the second solution (mountain pass solution) to equation \eqref{p1.1} for $2<q<3$, $\mu>0$ and $0<b<S^{-2}$? Furthermore, we will use a different approach from that in \cite{kong2023normalized} to prove the existence of the first normalized solution (local minimizer).
\section{Main results}
\setcounter{section}{2}
\setcounter{equation}{0}

Before stating the main theorems, we first introduce some preliminary results. For $N\geq3$, the best Sobolev embedding constant $S$ is given by
\begin{equation}\label{S}
    S=\inf_{u\in D^{1,2}(\mathbb R^N)\setminus \{0\}}\frac{|\nabla u|_2^2}{|u|_{2^*}^2},
\end{equation}
and it is achieved by
\begin{equation}\label{U}
	U_{\varepsilon, y}(x)=\left(\frac{\sqrt{N(N-2)\varepsilon}}
	{\varepsilon+|x-y|^2}\right)^{\frac{N-2}{2}},\ \ \varepsilon>0,\ \  x\in\mathbb{R}^N,
\end{equation}
which is the unique positive solution of \eqref{S*},
\begin{equation}\tag{$S^*$}\label{S*}
	\begin{cases}
		-\Delta u=|u|^{2^*-2}u&{\rm in}\ \mathbb{R}^N,\\
		u\in D^{1,2}(\mathbb R^N).&
	\end{cases}
\end{equation}
Moreover, $S^{N/2}=|U|_{2^*}^{2^*}=|\nabla U|_2^{2}$.  In order to obtain the nontrivial solutions, we introduce the Pohozaev mainifold
$$\mathcal{P}_{c,b,\mu}:=\{u\in S_c: P_{b,\mu}(u)=0\},$$where  $$P_{b,\mu}(u):=a|\nabla u|_2^2+b|\nabla u|_2^4-\mu\delta_q|u|_q^q-|u|_{{2^*}}^{{2^*}}$$
with $\delta_q:=\frac{N(q-2)}{2q}$. By the Pohozaev inequality, it is easy to conclude that all critical points of the functional $I_{b,\mu}|_{S_c}$ belongs to $\mathcal{P}_{c,b,\mu}$. Thus, if $\inf_{u\in\mathcal{P}_{c,b,\mu}}I_{b,\mu}(u)$ is achieved by the normalized solution for equation \eqref{p1.1}, then it is a ground state radial solution of equation \eqref{p1.1} on $S_c$. For any $u\in S_c$ and $s\in\mathbb R$, the function $s\ast u$ defined by
$$(s\ast u)(x):=e^{\frac{N}{2}s}u(e^sx)\ \  {\rm for\ \  a.e. }\ x\in\mathbb R^N,$$
then $s\ast u\in S_c$. We also know that the map $(s,u)\in \mathbb R \times H^1(\mathbb R^N)\mapsto (s*u)\in H^1(\mathbb R^N)$ is continuous by Lemma 3.5 in \cite{bartsch2019multiple}. Moreover, we define the fiber map
$$\Psi_u^{b,\mu}(s):=I_{b,\mu}(s\ast u)=\frac{ae^{2s}}{2}|\nabla u|_2^2+\frac{be^{4s}}{4}|\nabla u|_2^4-\mu\frac{e^{q\delta_q s}}{q}|u|_{q}^{q}-\frac{e^{2^*s}}{2^*}|u|_{2^*}^{2^*}.$$
By some direct calculations, we have
$$(\Psi_u^{b,\mu})'(s)=ae^{2s}|\nabla u|_2^2+be^{4s}|\nabla u|_2^4 -\mu \delta_qe^{q\delta_q s}|u|_{q}^{q}-e^{2^*s}|u|_{2^*}^{2^*}=P_{b,\mu}(s\ast u),$$ and the Pohozaev mainifold can be rewrite as $$\mathcal{P}_{c,b,\mu}=\{u\in S_c: (\Psi_u^{b,\mu})'(0)=0\}.$$
Obviously, $s$ is a critical point of $\Psi_u^{b,\mu}(s)$ if and only if $s*u\in\mathcal{P}_{c,b,\mu}$, and $u\in\mathcal{P}_{c,b,\mu}$ if and only if 0 is a critical point of $\Psi_u^{b,\mu}(s)$.
To determine the exact location and types of some critical points for $I_{b,\mu}|_{S_c}$, we know that $\mathcal{P}_{c,b,\mu}$ can be split into the disjoint union $\mathcal{P}_{c,b,\mu}=\mathcal{P}^+_{c,b,\mu}\cup\mathcal{P}^0_{c,b,\mu}\cup\mathcal{P}^-_{c,b,\mu}$, where
$$\mathcal{P}^+_{c,b,\mu}:=\{u\in \mathcal{P}_{c,b,\mu}: ((\Psi_u^{b,\mu})''(0)>0 \},$$ $$\mathcal{P}^-_{c,b,\mu}:=\{u\in \mathcal{P}_{c,b,\mu}: ((\Psi_u^{b,\mu})''(0)<0 \},$$
$$\mathcal{P}^0_{c,b,\mu}:=\{u\in \mathcal{P}_{c,b,\mu}: ((\Psi_u^{b,\mu})''(0)=0 \},$$ for
$(\Psi_u^{b,\mu})''(0)=2a|\nabla u|_2^2+4b|\nabla u|_2^4-\mu q\delta^2_q|u|_{q}^{q}-2^*|u|_{2^*}^{2^*}.$ We recall the Gagliardo-Nirenberg inequality that exists an optimal constant $C_{N,q}$ depending on $N$ and $q$ such that
\begin{equation}\label{G-N}
	|u|_q\leq C_{N,q}|\nabla u|_2^{\delta_{q}}|u|_2^{1-\delta_q},\ \ u\in H^1(\mathbb{R}^N),
\end{equation}where $C_q:=C_{N,q}$ simplifies the notation.

Next, we simplify some notations. When $\mu=0$, we will replace $I_{b,\mu}$, $P_{b,\mu}$, $\Psi_u^{b,\mu}$, $\mathcal{P}_{c,b,\mu}$, $\mathcal{P}^{\pm}_{c,b,\mu}$ and $\mathcal{P}^0_{c,b,\mu}$ with $I_{0}$, $P_{0}$, $\Psi_u^{0}$, $\mathcal{P}_0$, $\mathcal{P}^{\pm}_0$ and $\mathcal{P}^0_0$, respectively. When $\mu\neq0$, we will replace $I_{b,\mu}$, $P_{b,\mu}$, $\Psi_u^{b,\mu}$, $\mathcal{P}_{c,b,\mu}$, $\mathcal{P}^{\pm}_{c,b,\mu}$ and $\mathcal{P}^0_{c,b,\mu}$ with $I$, $P$, $\Psi_u$, $\mathcal{P}$, $\mathcal{P}^{\pm}$ and $\mathcal{P}^0$, respectively. In the case $N\geq 5$, before addressing the perturbation case, it is necessary to study the existence, nonexistence and multiplicity of normalized solutions to equation \eqref{p1.1} with $\mu=0$.

\begin{theorem}\label{th1.1}
    Assume that $N\geq5$ and $\mu=0$, equation \eqref{p1.1} has the following results.
        \begin{itemize}
        \item [$(i)$] If $0<b<b_0$, equation \eqref{p1.1} exists two solutions satisfying $\varphi_{\pm}\in\mathcal{P}^{\pm}_{0}$ such that
		\begin{equation}\label{th1eq1}
			\inf_{u\in\mathcal{P}^-_{0}}I_0(u)=I_0(\varphi_{-})=c_{N,-}>0, \ \
			\inf_{u\in\mathcal{P}^+_{0}}I_0(u)=I_0(\varphi_{+})=c_{N,+},\ \
		\end{equation}
		and $c_{N,-}>c_{N,+}$.
		Moreover, $c_{N,+}\leq0$ for $b\in(0,b_1]$ and $c_{N,+}>0$ for $b\in(b_1,b_0)$, where $b_0$ is defined in \eqref{b_0} and  $b_1$ is defined by
		\begin{equation*}\label{b_0_1}
			b_1:=\frac{4}{N}\left(\frac{N-4}{aN}\right)^{\frac{N-4}{2}}
			S^{-\frac{N}{2}}.
		\end{equation*}
		In particular, $\varphi_-$ is a mountain pass solution in $S_c$ and
        $$\inf_{u\in S_c}I_0(u)=\begin{cases}
		I_0(\varphi_+), & {\rm if}\ \ b\in(0,b_1],\\
		0,&{\rm if}\ \ b\in(b_1,b_0).
	\end{cases}$$
        \item[$(ii)$] If $b<0$, there exists $\varphi_1\in\mathcal{P}_{0}$ such that
        \begin{equation}\label{th1eq2}
        \inf_{u\in\mathcal{P}_{0}}I_0(u)=\inf_{u\in\mathcal{P}^-_{0}}I_0(u)=I_0(\varphi_1)=c_N>0.
        \end{equation}
       In particular, $\varphi_1$ is a mountain pass solution in $S_c$.
       \item[$(iii)$] If $b>b_0$, then $I_0|_{S_c}$ has no critical points. Consequently, equation \eqref{p1.1} has no solutions in $S_c$.
       \item[$(iv)$] If  the equation \eqref{S*} is equipped with $n$ solutions in $L^2(\mathbb R^N)$, $n\in\mathbb{N} $, then  there exist $b_n>0$,($b_n\leq b_0$),  such that equation \eqref{p1.1} with $b\in(0,b_n)$ has $2n$ solutions $\varphi_{i,\pm}\in\mathcal{P}_{0}^\pm$, $i\in\{1,2,\cdots,n\}$ and there holds,
		\begin{align*}
                I_0(\varphi_{i,-})&>I_0(\varphi_{i,+}), \\
              I_0(\varphi_{1,\pm})&<I_0(\varphi_{2,\pm})<\cdots<I_0(\varphi_{n,\pm})<\frac{a^2}{N(N-4)b},
		\end{align*}
        where $\varphi_\pm=\varphi_{1,\pm}$.
    \end{itemize}
\end{theorem}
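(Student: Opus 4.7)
\emph{Overall strategy.} The plan is to reduce the problem on $S_c$ to an analysis of the scaling fiber map $\Psi_u^0(s)=I_0(s\ast u)$ together with the Pohozaev decomposition $\mathcal{P}_0=\mathcal{P}_0^+\cup\mathcal{P}_0^0\cup\mathcal{P}_0^-$. Writing $A=|\nabla u|_2^2$, $B=|u|_{2^*}^{2^*}$ and $y=e^{2s}$, the critical points of $\Psi_u^0$ correspond to the zeros of
\[
f_u(y)=a+bAy-\frac{B}{A}\,y^{2/(N-2)},
\]
and a direct computation gives $(\Psi_u^0)''(s)=yA\bigl((4-2^*)bAy-(2^*-2)a\bigr)$ at any such zero. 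Since $N\ge 5$ forces $2/(N-2)\in(0,1)$, the map $f_u$ is \emph{strictly convex} in $y$ when $b>0$ and \emph{strictly decreasing} when $b<0$; this dimension-dependent change of concavity is exactly why $b_0$ is the sharp threshold.

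\emph{Parts (i) and (iii).} For $b>0$, the Sobolev inequality gives $B\le S^{-2^*/2}A^{2^*/2}$, and taking $u\in S_c$ to be a mass-adjusted cutoff of the Aubin--Talenti profile $U_\varepsilon$ realizes $B/A^{2^*/2}$ arbitrarily close to $S^{-2^*/2}$; the condition $\min_y f_u<0$ is then attainable for some $u\in S_c$ precisely when $b<b_0$, which yields $\mathcal{P}_0^\pm\ne\emptyset$ and $\mathcal{P}_0^0=\emptyset$ in that range. When $b>b_0$, $\min f_u>0$ for every $u\in S_c$, so $\mathcal{P}_0=\emptyset$, proving part (iii). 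For $0<b<b_0$, on each fiber the smaller zero of $f_u$ lies in $\mathcal{P}_0^-$ (local max of $\Psi_u^0$) and the larger in $\mathcal{P}_0^+$ (local min), so comparing fiber heights gives $c_{N,-}>c_{N,+}$. I realize $c_{N,+}$ via the coercivity bound
\[
I_0(u)\ge \tfrac a2|\nabla u|_2^2+\tfrac b4|\nabla u|_2^4-\tfrac 1{2^*}S^{-2^*/2}|\nabla u|_2^{2^*},
\]
whose leading $\tfrac b4|\nabla u|_2^4$ dominates $|\nabla u|_2^{2^*}$ (since $4>2^*$), bounding a radial minimizing sequence; the $\mathcal{P}_0^+$ constraint pins $|\nabla u_n|_2$ away from the bubble scale, and a Brezis--Lieb argument together with the compact radial embedding give strong convergence. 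I realize $c_{N,-}$ via the min--max $c_{N,-}=\inf_{u\in S_c}\max_{s\in\mathbb{R}}I_0(s\ast u)$, verifying the sub-bubble estimate $c_{N,-}<a^2/(N(N-4)b)$ by testing on $U_\varepsilon$. The secondary threshold $b_1$ governing the sign of $c_{N,+}$ arises from requiring the local-minimum value of
\[
g(t):=\tfrac a2 t+\tfrac b4 t^2-\tfrac 1{2^*}S^{-2^*/2}\,t^{2^*/2}
\]
to vanish.

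\emph{Parts (ii) and (iv).} For (ii), $b<0$ makes $f_u$ strictly decreasing, so it has exactly one zero for each $u$ and $\mathcal{P}_0=\mathcal{P}_0^-$; the Pohozaev identity rewrites $I_0|_{\mathcal{P}_0}$ as a positive combination of $|\nabla u|_2^2$, $|\nabla u|_2^4$ and $|u|_{2^*}^{2^*}$, giving $c_N>0$, and a radial minimizer is extracted as before (concentration to a Kirchhoff bubble is impossible when $b<0$). For (iv), given $n$ distinct $L^2$-solutions $W_1,\dots,W_n$ of \eqref{S*}, I renormalize each to $L^2$-mass $c$ and run the fiber-map analysis profile by profile; since $|\nabla W_i|_2^2=|W_i|_{2^*}^{2^*}$ by the Pohozaev identity for \eqref{S*}, the quantity $B_i/A_i^{2^*/2}$ depends only on the profile, so there is an explicit $b_i>0$ below which $f_{W_i}$ has two roots. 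Setting $b_n:=\min_{1\le i\le n}b_i\le b_0$ produces the $2n$ critical points $\varphi_{i,\pm}$; their ordering follows from the distinct values of $|\nabla W_i|_2^2$, and the uniform bound $I_0(\varphi_{i,\pm})<a^2/(N(N-4)b)$ follows by direct computation of the Kirchhoff energy on the Aubin--Talenti profile.

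\emph{Main obstacle.} The delicate step will be proving compactness at the mountain pass level $c_{N,-}$: a Palais--Smale sequence can leak mass to a concentrating Kirchhoff bubble, and ruling this out requires an \emph{explicit} estimate $c_{N,-}<a^2/(N(N-4)b)$ obtained by constructing a test path $s\mapsto s\ast u_\varepsilon$ with $u_\varepsilon$ a mass-adjusted $U_\varepsilon$. Because $2^*<4$ for $N\ge 5$, the nonlocal term $\tfrac b4|\nabla u|_2^4$ and the critical term $\tfrac 1{2^*}|u|_{2^*}^{2^*}$ must be balanced simultaneously against the Lagrange multiplier coming from the mass constraint, and this is precisely where the formula for $b_0$ enters.
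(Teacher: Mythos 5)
Your fiber-map/Pohozaev skeleton matches the paper's, and your formula for $(\Psi_u^{0})''$ at a critical point, the role of $2/(N-2)<1$, and the treatment of parts (iii) and (iv) are essentially the paper's arguments. The genuine gap is in how you propose to \emph{attain} the infima $c_{N,\pm}$ (and the level in (ii)). The paper never runs a minimizing sequence or a Palais--Smale argument here: on $\mathcal{P}_0$ one has $I_0(u)=\tfrac{a}{N}|\nabla u|_2^2-\tfrac{N-4}{4N}b|\nabla u|_2^4$, a monotone function of $|\nabla u|_2$ on each of $\mathcal{P}_0^-=\{\xi_-\le|\nabla u|_2<\eta\}$ and $\mathcal{P}_0^+=\{\eta<|\nabla u|_2\le\xi_+\}$, so the infima are the endpoint values $\mathcal{I}_0(\xi_\pm^2)$, and the endpoints $|\nabla u|_2=\xi_\pm$ are attained exactly by the dilated Aubin--Talenti bubbles $\varphi_\pm=s_\pm\ast U_{\varepsilon_c,0}$ --- which lie in $S_c$ precisely because $N\ge 5$ forces $U_{\varepsilon,y}\in L^2(\mathbb R^N)$, so no cutoff is needed and $|U_{\varepsilon_c,0}|_2=c$ can be arranged exactly. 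Moreover $g(t_\pm)=0$ means $a+b|\nabla\varphi_\pm|_2^2=t_\pm^{4/(N-2)}$, so $\varphi_\pm$ solve the Kirchhoff equation with $\lambda=0$ by direct computation. Your proposal instead takes a ``mass-adjusted cutoff'' of $U_\varepsilon$, which only realizes $|u|_{2^*}^{2^*}/|\nabla u|_2^{2^*}$ \emph{arbitrarily close} to $S^{-2^*/2}$; with that alone the endpoint values are infima that need not be attained, and you are forced into the compactness argument you sketch.

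That compactness argument does not work as stated. First, the radial embedding $H^1_{rad}(\mathbb R^N)\hookrightarrow L^{2^*}(\mathbb R^N)$ is \emph{not} compact at the Sobolev critical exponent, so ``Brezis--Lieb plus compact radial embedding'' cannot give strong convergence of a minimizing sequence on $\mathcal{P}_0^+$. Second, your ``main obstacle'' is misdiagnosed: for $\mu=0$ the Palais--Smale sequence at level $c_{N,-}$ converges \emph{to} a rescaled bubble (whose energy is $c_{N,-}$ itself), so concentration is not something to be ruled out by a strict inequality against $a^2/(N(N-4)b)$ --- that number is merely $\max\mathcal I_0=\mathcal I_0(\eta^2)$ and plays no compactness role here (the paper only develops the Palais--Smale machinery in Section 4, for the perturbed case $\mu>0$). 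Third, your claim that $\mathcal{P}_0^0=\emptyset$ for $0<b<b_0$ is unjustified (one can build $u\in S_c$ with $|\nabla u|_2=\eta$ and the sub-optimal Sobolev ratio required for $P_0(u)=0$); the paper does not need it because the critical points are exhibited explicitly rather than obtained as constrained minimizers via a natural-constraint argument. To repair your proof, replace the minimizing-sequence step by the observation that $I_0|_{\mathcal{P}_0}$ depends only on $|\nabla u|_2$ and that the exact $L^2$-normalized bubbles realize the extremal gradient norms $\xi_\pm$; the mountain-pass characterization of $\varphi_-$ then follows, as in the paper, by comparing paths crossing $\partial A_{\xi_-}$ with the explicit path $t\mapsto u_{ts_++(1-t)s_0}$.
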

\begin{remark}
    More details about Theorem \ref{th1.1} are as following.
   \begin{itemize}
       \item [$(i)$] If $b=0$, equation \eqref{p1.1} reduces to the Schr\"odinger equation with Sobolev critical nonlinearity, which has normalized solutions in $S_c$ in \cite{soave2020sobolev}. By using the norm-preserving scaling, Theorem \ref{th1.1} extends the results of \cite{xie2022study}, which considered the general solution $u\in D^{1,2}(\mathbb R^N)$, to the normalized solution $u\in S_c$. Compared to the results of \cite{soave2020sobolev}, which considered the case $b=0$, the study of equation \eqref{p1.1} with $b>0$ is more difficult due to the additional impact of the nonlocal term $(\int_{\mathbb R^N}|\nabla u|^2dx) \Delta u$. Specifically, the energy functional $I(u)$ consists of four distinct terms, and their interplay significantly influences its structure, making it more difficult to determine the types of critical points on $S_c$. Moreover, the presence of the nonlocal term complicates the recovery of compactness for the Palais-Smale sequence. This is because the weak convergence $u_n\rightharpoonup u$ in $H^1(\mathbb R^N)$ does not guarantee the convergence $$
        |\nabla u_n|_2^2\int_{\mathbb R^N}\nabla u_n \nabla\varphi dx\to|\nabla u|_2^2\int_{\mathbb R^N}\nabla u\nabla\varphi dx \ \ {\rm for}\ \ {\rm all}\ \ \varphi \in \mathcal C_0^{\infty}(\mathbb R^N).$$
       \item [$(ii)$] In the proof of Theorem \ref{th1.1}, we constrain the functional $I_0$ on the Pohozaev mainifold $\mathcal{P}_{0}$ and know that $I_0|_{\mathcal{P}_{0}}$ is bounded from below. As we know, the structure of $I_0$ changes with the variation of $b$. It is obvious to check that $I_0$ has a concave-convex structure for $0<b<b_0$, and then we obtain two normalized solutions of equation \eqref{p1.1} by decomposing the Pohozaev mainifold and using a fiber map. Moreover, by decomposing the $S_c$ and constructing a minimax characterization, we find that one solution is a minimizer and the other is a mountain pass solution. Similarly, we obtain a mountain pass solution for $b<0$ and prove that there is no solution for $b> b_0$. Actually, the existence of two normalized solutions has been obtained in \cite{Qizou2022}, however, we not only obtain the existence of the normalized solutions, but also investigate some properties of those solutions.
   \end{itemize}

\end{remark}

Now we consider the perturbation case $\mu>0$.
\begin{theorem}\label{th2}
    Assume that $N\geq5$, $b_1<b<b_0$, $\mu>0$ and $2<q<2+\frac{4}{N}$.
    \begin{itemize}
        \item [$(i)$] There exists $\mu_2>0$ such that  if $\mu c^\frac{q(1-\delta_q)}{2}<\mu_2$, equation \eqref{p1.1} has a positive radial normalized solution with negative energy level, i.e., $m(b,\mu)<0$.
        \item [$(ii)$] If $u_{b,\mu}$ is a solution of equation \eqref{p1.1} obtained by $(i)$, then
    $$m(b,\mu)\to {0}\ \ and\ \ |\nabla u_{b,\mu}|_2^2\to 0 \ \ as\ \ \mu\to 0^+.$$
\end{itemize}
\end{theorem}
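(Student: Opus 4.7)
The plan is to produce a positive local minimizer of $I_{b,\mu}|_{\bar S_c}$ inside a ``well'' of small radius via Ekeland plus Brezis-Lieb, then read the asymptotics straight off the well geometry.

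\emph{Well geometry and Palais-Smale sequence.} For $u\in\bar S_c$, \eqref{G-N} and \eqref{S} give $I_{b,\mu}(u)\ge g_\mu(|\nabla u|_2)$ with
\[
g_\mu(t):=\tfrac{a}{2}t^2+\tfrac{b}{4}t^4-\tfrac{\mu C_q^q c^{q(1-\delta_q)/2}}{q}t^{q\delta_q}-\tfrac{t^{2^*}}{2^*S^{2^*/2}}.
\]
Since $N\ge 5$ and $2<q<2+\tfrac{4}{N}$ force $q\delta_q<2<2^*<4$, studying $g_0'(t)/t=a+bt^2-S^{-2^*/2}t^{2^*-2}$ together with the formulae for $b_0$, $b_1$ shows that, for $b\in(b_1,b_0)$, $g_0$ is strictly positive on $(0,\infty)$ with a local maximum at some $t_\ast>0$ (essentially $c_{N,+}>0$ in Theorem \ref{th1.1}). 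Fix $R\in(0,t_\ast)$ further restricted to $R^2<\tfrac{4a}{Nb}$, and choose $\mu_2>0$ so that $\mu c^{q(1-\delta_q)/2}<\mu_2$ ensures both $g_\mu(R)>\tfrac12 g_0(R)>0$ and $\{g_\mu<0\}\subset(0,t_\mu^\sharp)$ with $t_\mu^\sharp=O(\mu^{1/(2-q\delta_q)})\to 0^+$. Put $A_R:=\{u\in\bar S_c:|\nabla u|_2<R\}$ and $m(b,\mu):=\inf_{A_R}I_{b,\mu}$. Then $I_{b,\mu}\ge g_\mu(R)>0$ on $\partial A_R$, while for any $u_0\in\bar S_c$ the fiber $\Psi_{u_0}(s)=I_{b,\mu}(s\ast u_0)$ has its slowest-decaying exponential $e^{q\delta_q s}$ attached to the negative term $-\tfrac{\mu}{q}|u_0|_q^q$, so $\Psi_{u_0}(s)\to 0^-$ and $s\ast u_0\in A_R$ has negative energy for $s\ll 0$, giving $m(b,\mu)<0$. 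Ekeland's principle on $\overline{A_R}$ produces a bounded Palais-Smale sequence $(u_n)\subset A_R$ at level $m(b,\mu)$ with bounded Lagrange multipliers $(\lambda_n)$; Schwarz symmetrization makes each $u_n$ nonnegative, radial, decreasing.

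\emph{Compactness.} Along a subsequence $u_n\rightharpoonup u$ in $H^1_{\mathrm{rad}}$, $u_n\to u$ in $L^q$ (Strauss compactness), $\lambda_n\to\lambda$, $|\nabla u_n|_2^2\to B^2\le R^2$, and $u$ solves $-(a+bB^2)\Delta u+\lambda u=\mu|u|^{q-2}u+|u|^{2^*-2}u$. If $u\equiv 0$, $|u_n|_q\to 0$ and $I_{b,\mu}(u_n)\ge g_0(|\nabla u_n|_2)\ge 0$ would contradict $m(b,\mu)<0$; combining Pohozaev with testing by $u$ then gives $\lambda|u|_2^2=\mu(1-\delta_q)|u|_q^q>0$, so $\lambda>0$. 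Setting $v_n:=u_n-u$, the Brezis-Lieb splittings of $|\nabla u_n|_2^2$ and $|u_n|_{2^*}^{2^*}$, combined with the PS identity and the energy, give
\[
(a+bB^2)|\nabla v_n|_2^2=|v_n|_{2^*}^{2^*}+\lambda_n|v_n|_2^2+o(1),
\]
\[
I_{b,\mu}(u_n)-I_{b,\mu}(u)=|\nabla v_n|_2^2\Bigl(\tfrac{a+bB^2}{N}-\tfrac{b|\nabla v_n|_2^2}{4}\Bigr)+\tfrac{\lambda_n(N-2)}{2N}|v_n|_2^2+o(1).
\]
The $L^2$-mass of radial critical Sobolev bubbles vanishes in dimension $N\ge 5$ (one has $|U_{\varepsilon,0}|_2^2=O(\varepsilon^{N/2})\to 0$), so any concentration of $v_n$ carries no $L^2$-mass; the PS identity with $\lambda>0$ handles any spreading part; together these give $|v_n|_2\to 0$, so $u\in\bar S_c\cap\overline{A_R}$ and $I_{b,\mu}(u)\ge m(b,\mu)$. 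On the other hand, $|\nabla v_n|_2^2\le R^2<\tfrac{4a}{Nb}\le\tfrac{4(a+bB^2)}{Nb}$ makes the bracketed parenthesis strictly positive and the $\lambda_n$-term is nonnegative, so passing to the limit yields $m(b,\mu)-I_{b,\mu}(u)>0$ whenever $|\nabla v_n|_2\not\to 0$, contradicting $I_{b,\mu}(u)\ge m(b,\mu)$. Hence $|\nabla v_n|_2\to 0$ and $u_n\to u$ in $H^1$; $u\in A_R$ solves \eqref{p1.1} at level $m(b,\mu)<0$, and the strong maximum principle yields $u>0$.

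\emph{Part (ii) and main obstacle.} From $g_\mu(|\nabla u_{b,\mu}|_2)\le m(b,\mu)<0$ the gradient lies in $\{g_\mu<0\}\subset(0,t_\mu^\sharp)$, and $t_\mu^\sharp\to 0^+$ gives $|\nabla u_{b,\mu}|_2\to 0$ as $\mu\to 0^+$. Moreover $g_\mu\to g_0\ge 0$ uniformly on $[0,R]$, so $\min g_\mu\to 0$ and $m(b,\mu)\ge\min g_\mu$ yields $m(b,\mu)\to 0^-$. The main obstacle is the compactness step: the nonlocal quartic produces the subtracted $-\tfrac{b}{4}|\nabla v_n|_2^4$ in the energy identity, which must be dominated by the positive Sobolev-bubble contribution $\tfrac{1}{N}|v_n|_{2^*}^{2^*}$; pinning $R^2<\tfrac{4a}{Nb}$ at the outset is precisely what makes this domination work, and is the structural reason behind the explicit smallness threshold on $\mu$.
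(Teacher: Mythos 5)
Your strategy is genuinely different from the paper's. The paper minimizes $I$ globally on $S_c$ (the functional is coercive there since $q\delta_q<2<2^*<4$), projects the minimizing sequence onto the Pohozaev manifold so that $P(u_n)\to0$, and then rules out loss of compactness by an energy quantization: Proposition \ref{pro1} shows a surviving bubble has gradient norm in $[\xi_-,\xi_+]$ and costs at least $c_{N,+}$, while Lemma \ref{lemma3.2} shows the weak limit's contribution $K(u)$ is at least $-c_{N,+}$, so the total level would be $\ge 0$, contradicting $m(b,\mu)<0$. You instead confine everything to a gradient ball $A_R$ with $R<\xi_-$; since any nontrivial bubble must satisfy $a l^2+bl^4\le S^{-2^*/2}l^{2^*}$, i.e.\ $l\in[\xi_-,\xi_+]$, bubbling is excluded a priori and the delicate estimate of Lemma \ref{lemma3.2} is never needed. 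This is a legitimate and in some ways more elementary route (you should still note that your local infimum over $A_R$ coincides with the paper's $m(b,\mu)=\inf_{S_c}I$ for small $\mu$, which follows from $I\ge h_1(|\nabla u|_2)\ge0$ outside $A_{\xi_0^{\mu,1}}$). Part (ii) is essentially the paper's argument.

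However, the compactness step as you wrote it has a concrete error and a gap. Testing the equations for $u_n$ and for the weak limit $u$ with $u_n$ and $u$ respectively and subtracting gives
\begin{equation*}
(a+bB^2)|\nabla v_n|_2^2+\lambda|v_n|_2^2=|v_n|_{2^*}^{2^*}+o(1),
\end{equation*}
not $(a+bB^2)|\nabla v_n|_2^2=|v_n|_{2^*}^{2^*}+\lambda_n|v_n|_2^2+o(1)$: the multiplier term sits on the gradient side. Consequently the $\lambda$-contribution to your energy expansion is $-\tfrac{\lambda(N-2)}{2N}\lim|v_n|_2^2\le0$, and your assertion that ``the $\lambda_n$-term is nonnegative'' is false. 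This matters because your claim $|v_n|_2\to0$ --- on which the whole contradiction rests --- is only justified by the heuristic ``bubbles carry no $L^2$-mass and the PS identity handles the spreading part,'' which is not a proof (it presupposes a profile decomposition), and with your sign the identity points the wrong way: it yields $\lambda\lim|v_n|_2^2\ge l^2 f(l)\ge0$, i.e.\ no information. The repair is immediate once the sign is fixed: combining the correct identity with $|v_n|_{2^*}^{2^*}\le S^{-2^*/2}|\nabla v_n|_2^{2^*}+o(1)$ and $B^2\ge l^2$ gives $\lambda\lim|v_n|_2^2\le -l^2f(l)$, and since $l\le R<\xi_-$ forces $f(l)>0$ unless $l=0$, you obtain $l=0$ and $|v_n|_2\to0$ in one stroke, after which strong $H^1$ convergence follows as in the paper. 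So the architecture is sound, but as written the key step is not established.
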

\begin{remark}
     There are some explanations about Theorem \ref{th2}.
    \begin{itemize}
        \item [$(i)$] Theorem \ref{th2} extends the results of \cite{feng2023normalized,kong2023normalized}, which studied equation \eqref{p1.1} in the cases $N=3,4$. Compared with the cases $N=3,4$, the nonlocal term $|\nabla u|_2^4$ always plays a dominant role in the four terms of $I(u)$ for the cases $N\geq5$, which makes the problem more difficult and interesting. For $2<q<2+\frac{4}{N}$, to recover the compactness of the Palais-Smale sequence, \cite{feng2023normalized} employed the continuity and monotonicity of the energy level for $N=3$, while \cite{kong2023normalized} added an additional constraint on $\mu,c$ for $N=4$. However, these methods are not applicable for $N\geq5$, we need to find a new method to overcome this difficulty.
        \item [$(ii)$]
       One of the most crucial steps in processing with the critical Sobolev problem is  overcoming the lack of compactness. The case $b<b_0$ is totally  different from the case $b>b_0$ in recovering the  compactness results.
        Recalling the Theorem 1.5 in Zhang et al. \cite{zhang2022normalized},  the authors recover the compactness results by using the essential condition $b>b_0$.  Actually, the limit equation (E.q. \eqref{p1.1} with $\mu=0$),  has no normalized solution, see Theorem \ref{th1.1} (iii) and it is a intrinsic condition in recovering the compactness results. More precisely, for $b>b_0$,  the function $h(t)$ is strictly monotonically increasing for any $t>0$, where
        \begin{equation}\label{h}
           h(t):=\frac{a}{2}t^2+\frac{b}{4}t^4-\frac{S^{-\frac{2^*}{2}}}{2^*}t^{2^*}.
        \end{equation}
        For any $u\in S_c$, by a simple calculation, one obtains that
        \begin{equation*}
		I(u)=\frac{a}{2}|\nabla u|_2^2+\frac{b}{4}|\nabla u|_2^4-\frac{1}{2^*}|u|_{2^*}^{2^*}-\frac{1}{q}|u|_q^q\ge h(|\nabla u|_2)-\frac{1}{q}C_q^qc^\frac{q(1-\delta_q)}{2}|\nabla u|_2^{q\delta_q}.
\end{equation*}
        Since $2<q<2+\frac{4}{N}$, it is obvious to check that the constraint functional $I|_{S_c}$ has a convex structure. Then, by using the minimization method and Ekeland's variational principle, one obtains a Palais-Smale sequence $\{u_n\}$ of the constraint functional $I|_{S_c}$ with $P(u_n)\to0$ as $n\to +\infty$. Last but no least, the Palais-Smale condition essential follows from  that the limit equation possesses no normalized solution.
        However, in the case $0<b<b_0$, it will be a more complicated case not only the difficulties on checking the Palais-Smale condition, but also the constraint functional requesting the intricate convex-concave-convex structure. In this paper, we establish the existence of a normalized solution for $b_1<b<b_0$. It is a interesting question that whether equation \eqref{p1.1} possesses more normalized solutions for $0<b<b_0$.
    \end{itemize}
\end{remark}
\begin{theorem}\label{th3}
Assume that $N\geq5$, $b>0$, $\mu<0$ and $2<q<2^*$.
\begin{itemize}
        \item [$(i)$] If $u$ is a critical point for $I|_{S_c}$ (not necessarily positive, or even real-valued), then the associated Lagrange multiplier $\lambda>0$ and $I(u)>c_{N,+}.$
        \item[$(ii)$]The equation
        \begin{equation}\label{th1.4eq}
            -\Big(a+b\int_{\mathbb R^N} | \nabla u|^2dx\Big) \Delta u=\lambda u+\mu |u|^{q-2}u+|u|^{2^*-2}u, \ \ u>0 \ \ {\rm in}\ \ \mathbb{R}^N
        \end{equation} has no solution $u\in H^1(\mathbb R^N)\cap u\in L^p(\mathbb R^N)$ for $\lambda>0$, where $p\in(0,\frac{N}{N-2}]$.
    \end{itemize}
\end{theorem}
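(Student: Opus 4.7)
The plan is to establish (i) and (ii) in turn, both by exploiting the interplay of the Nehari and Pohozaev identities, together with a fiber-map comparison against the pure-critical problem for (i), and a decay / supersolution argument for (ii).

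For (i), I would test the Euler--Lagrange equation $I'(u)=\lambda u$ (equivalently $-A\Delta u=\lambda u+\mu|u|^{q-2}u+|u|^{2^*-2}u$, in the sign convention of (ii), with $A:=a+b\|\nabla u\|_2^2$) against $u$ to obtain the Nehari identity
\[
A\|\nabla u\|_2^2=\lambda\|u\|_2^2+\mu\|u\|_q^q+\|u\|_{2^*}^{2^*},
\]
and combine it with the Pohozaev identity $A\|\nabla u\|_2^2=\mu\delta_q\|u\|_q^q+\|u\|_{2^*}^{2^*}$ coming from $u\in\mathcal{P}_{c,b,\mu}$. Subtracting cancels both the gradient and Sobolev-critical terms and leaves
\[
\lambda\|u\|_2^2=-\mu(1-\delta_q)\|u\|_q^q.
\]
Since $\mu<0$, $1-\delta_q>0$ for $q<2^*$, and $\|u\|_q>0$ on $S_c$, the right-hand side is strictly positive, hence $\lambda>0$.

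For the energy comparison $I(u)>c_{N,+}$ I would run the fiber map $\Psi_u^0(s)=I_0(s\ast u)$ associated with the pure-critical limit functional. A direct calculation using $P_{b,\mu}(u)=0$ gives $(\Psi_u^0)'(0)=P_0(u)=P_{b,\mu}(u)+\mu\delta_q\|u\|_q^q=\mu\delta_q\|u\|_q^q<0$. Because $N\geq 5$ enforces $2<2^*<4$, the two positive summands of
\[
(\Psi_u^0)'(s)=ae^{2s}\|\nabla u\|_2^2+be^{4s}\|\nabla u\|_2^4-e^{2^*s}\|u\|_{2^*}^{2^*}
\]
sit at the two extreme exponents, so they dominate as $s\to-\infty$ and as $s\to+\infty$ respectively. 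Together with $(\Psi_u^0)'(0)<0$, this forces two zeros $s_-<0<s_+$; the sign of $(\Psi_u^0)''$ at $s_+$ identifies $s_+\ast u\in\mathcal{P}_0^+\cap S_c$. On $(s_-,s_+)$ the function $\Psi_u^0$ is strictly decreasing, so
\[
I_0(u)=\Psi_u^0(0)>\Psi_u^0(s_+)=I_0(s_+\ast u)\geq c_{N,+}.
\]
Finally, $\mu<0$ yields $I_{b,\mu}(u)=I_0(u)-\tfrac{\mu}{q}\|u\|_q^q>I_0(u)$, and chaining the inequalities gives $I(u)>c_{N,+}$.

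For (ii), assume toward contradiction that $u>0$ solves the displayed equation with $\lambda>0$ and $u\in H^1(\mathbb{R}^N)\cap L^p(\mathbb{R}^N)$ for some $p\in(0,N/(N-2)]$. The integrability hypothesis is precisely calibrated to force $u$ to decay faster than the fundamental solution $|x|^{-(N-2)}$, which permits integrating the equation against a cutoff $\eta_R$: one integration by parts yields $|\int u\Delta\eta_R\,dx|\leq CR^{N-N/p-2}\to 0$, so that $\int_{\mathbb{R}^N}(-A\Delta u)\,dx=0$ and one extracts
\[
\lambda\|u\|_1+\mu\|u\|_{q-1}^{q-1}+\|u\|_{2^*-1}^{2^*-1}=0.
\]
Coupled with the Pohozaev--Nehari relation from (i), this system must be forced incompatible with $u>0$, $\mu<0$, $\lambda>0$. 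A more geometric route, which I expect to be cleaner, notes that since $u\to 0$ at infinity and $q>2$, the lower-order nonlinearities satisfy $\mu u^{q-1}+u^{2^*-1}=o(u)$ for large $|x|$, so the equation produces $-\Delta u\geq(\lambda/(2A))u$ on an exterior domain, and a positive decaying supersolution of such a threshold Schr\"odinger operator is excluded by an Agmon--Allegretto--Piepenbrink or Sturm-comparison argument. The main obstacle I foresee is precisely this last step: ruling out positive decaying supersolutions rigorously, where the specific exponent $N/(N-2)$ appears as the critical integrability matching the fundamental-solution decay rate.
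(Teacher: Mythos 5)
Your part (i) is sound. The identity $\lambda\|u\|_2^2=-\mu(1-\delta_q)\|u\|_q^q>0$ obtained by subtracting the Pohozaev identity from the Nehari identity is exactly the paper's argument (the paper works in the sign convention of \eqref{p1.1} and therefore states the conclusion as $\lambda<0$ there; your convention matches the one used in \eqref{th1.4eq}, so the two agree). For the energy bound your route differs from the paper's: you project $u$ onto $\mathcal{P}_0^+$ along the fiber map and invoke $\inf_{\mathcal{P}_0^+}I_0=c_{N,+}$ from Theorem \ref{th1.1}, whereas the paper argues directly that $P(u)=0$ and $\mu<0$ force $f(|\nabla u|_2)<0$, hence $\xi_-<|\nabla u|_2<\xi_+$, and then minimizes $t\mapsto\frac{a}{N}t-\frac{N-4}{4N}bt^2$ over $[\xi_-^2,\xi_+^2]$ (the minimum sits at $\xi_+^2$ because $\xi_+^2-\eta^2>\eta^2-\xi_-^2$). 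Both work; yours trades that endpoint comparison for the monotonicity analysis of $\Psi_u^0$, and since $2<2^*<4$ for $N\geq5$ the claimed shape of $(\Psi_u^0)'$ (positive at both ends, negative at $0$, exactly two zeros) is correct, so $I_0(u)>I_0(s_+\ast u)\geq c_{N,+}$ and $I(u)=I_0(u)-\frac{\mu}{q}\|u\|_q^q>I_0(u)$ closes the argument. (For $b\geq b_0$ the set $\mathcal{P}_0$ is empty, so your construction of $s_+\ast u\in\mathcal{P}_0$ simply shows no critical point exists and (i) is vacuous; for $0<b<b_0$ everything is as you say.)

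Part (ii) has a genuine gap. Your first route fails: integrating the equation over $\mathbb{R}^N$ yields $\lambda\|u\|_1+\|u\|_{2^*-1}^{2^*-1}=-\mu\|u\|_{q-1}^{q-1}$, in which every term is positive when $\mu<0$, $\lambda>0$, $u>0$, so there is no sign incompatibility and no contradiction (quite apart from the unjustified integrability of $u$, $u^{q-1}$, $u^{2^*-1}$). Your second route is left as an acknowledged obstacle, and it is also not where the hypothesis $p\in(0,\frac{N}{N-2}]$ enters. The paper's argument is the following chain: Br\'ezis--Kato regularity gives $u,|\nabla u|\in L^\infty$ and $u(x)\to0$ as $|x|\to\infty$; since $q>2$ and $2^*>2$, the terms $\mu u^{q-1}+u^{2^*-1}$ are $o(u)$ at infinity, so $-\Delta u>0$ on $\{|x|>R_0\}$, i.e.\ $u$ is superharmonic at infinity; the Hadamard three-spheres theorem applied to $m(r)=\min_{|x|=r}u$ (letting the outer radius tend to infinity, where $m\to0$) shows $r\mapsto r^{N-2}m(r)$ is non-decreasing, hence $u(x)\geq m(R_0)R_0^{N-2}|x|^{2-N}$ for $|x|>R_0$; finally $\int_{R_0}^\infty r^{p(2-N)+N-1}dr=\infty$ precisely when $p\leq\frac{N}{N-2}$, contradicting $u\in L^p$. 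So the exponent $\frac{N}{N-2}$ is the integrability threshold of the fundamental-solution lower bound produced by superharmonicity, not of an oscillation argument. If you want to pursue your Allegretto--Piepenbrink route instead, note that it would have to be carried out in full (ground-state substitution $\psi^2/u$ against $-\Delta u\geq\frac{\lambda}{2A}u$ on an exterior domain, contradicting $\inf\sigma(-\Delta)=0$ there); as written, part (ii) is not proved.
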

\begin{remark}
    The proof of Theorem \ref{th3} is inspired by Theorem 1.2 in \cite{soave2020sobolev}, which considers the Schr\"odinger equation with $\mu<0$.
\end{remark}

 In the case $N=4$, we first give some results of equation \eqref{p1.1} with $\mu=0$.
\begin{theorem}\label{th5}
    Assume that $N=4$ and $\mu=0$,  it follows that
    $$\inf_{u\in S_c}I_0(u)=\begin{cases}
		0,&{\rm if}\ \ b\geq S^{-2},\\
	-\infty, &{\rm if}\ \ 0<b<S^{-2},
	\end{cases}$$
    where $\inf_{u\in S_c}I_0(u)$ is not achieved for $b\geq S^{-2}$.   Moreover,  for $0<b<S^{-2}$,    there holds that $$\inf_{u\in\mathcal{P}_{0}}I_0(u)=\inf_{u\in\mathcal{P}^-_{0}}I_0(u)=\Lambda
    =\frac{a^2S^2}{4(1-bS^2)},$$
which can not be achieved.
\end{theorem}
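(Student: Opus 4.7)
The plan is to exploit the $L^2$-preserving dilation $s\ast u$ together with the fact that in dimension $N=4$ both the nonlocal term and the Sobolev critical term scale identically as $e^{4s}$, and that the Sobolev constant provides the sharp estimate $|u|_4^4\leq S^{-2}|\nabla u|_2^4$, with equality only for Talenti functions $U_{\varepsilon,y}$, which fail to belong to $L^2(\mathbb R^4)$.

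For the first assertion, I would write
$$I_0(s\ast u)=\frac{ae^{2s}}{2}|\nabla u|_2^2+\frac{e^{4s}}{4}\bigl(b|\nabla u|_2^4-|u|_4^4\bigr),$$
so the global geometry of $I_0|_{S_c}$ is determined by the sign of $b|\nabla u|_2^4-|u|_4^4$. If $b\geq S^{-2}$, Sobolev gives $I_0(u)\geq\frac{a}{2}|\nabla u|_2^2\geq 0$ on $S_c$, while taking any fixed $u\in S_c$ and sending $s\to-\infty$ shows $I_0(s\ast u)\to 0$; equality in the lower bound would force $|\nabla u|_2=0$, incompatible with $u\in S_c$. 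If $0<b<S^{-2}$, a cutoff of the Talenti bubble $U_{\varepsilon,0}$, rescaled into $S_c$, produces $u\in S_c$ with $|u|_4^4/|\nabla u|_2^4$ arbitrarily close to $S^{-2}$ and in particular strictly larger than $b$; sending $s\to+\infty$ then drives $I_0(s\ast u)\to-\infty$.

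For the Pohozaev part, substituting the constraint $|u|_4^4=a|\nabla u|_2^2+b|\nabla u|_2^4$ into $I_0$ collapses the functional to $I_0(u)=\frac{a}{4}|\nabla u|_2^2$, and a direct computation gives $(\Psi_u^0)''(0)=-2a|\nabla u|_2^2<0$, so $\mathcal{P}_0=\mathcal{P}_0^-$. Coupling $P_0(u)=0$ with Sobolev yields $a|\nabla u|_2^2\leq(S^{-2}-b)|\nabla u|_2^4$, hence $|\nabla u|_2^2\geq aS^2/(1-bS^2)$ and $I_0(u)\geq\Lambda$. Sharpness I would establish by taking a concentrating family $w_n\in S_c$ built from truncated bubbles $U_{\varepsilon_n,0}$ with $|w_n|_4^4/|\nabla w_n|_2^4\to S^{-2}$, then projecting onto $\mathcal{P}_0$ via the unique $s_n$ with $s_n\ast w_n\in\mathcal{P}_0$; from the explicit formula $e^{2s_n}=a|\nabla w_n|_2^2/(|w_n|_4^4-b|\nabla w_n|_2^4)$ one reads off $I_0(s_n\ast w_n)\to\Lambda$.

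The main obstacle is the non-attainment, but it becomes transparent once the reduction is in place: if $\Lambda$ were attained at some $u_0\in\mathcal{P}_0$, tracing equality back through the chain would force $|u_0|_4^4=S^{-2}|\nabla u_0|_2^4$, i.e.\ $u_0$ would be a Sobolev extremizer on $\mathbb{R}^4$. But all such extremizers are translates and dilates of $U_{\varepsilon,y}$, which decay too slowly at infinity to belong to $L^2(\mathbb{R}^4)$, contradicting $u_0\in S_c$. The one delicate step in the plan is organizing the concentration family so that the truncation, the $L^2$-rescaling, and the projection onto $\mathcal{P}_0$ can be performed simultaneously while both the Sobolev quotient converges to $S^{-2}$ and the mass stays fixed at $c$; this is a Brezis--Nirenberg style bookkeeping tailored to the critical dimension.
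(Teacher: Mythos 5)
Your argument is correct and rests on the same mechanism as the paper's proof: the Sobolev inequality $|u|_4^4\leq S^{-2}|\nabla u|_2^4$ for the lower bounds, truncated Aubin--Talenti bubbles renormalized into $S_c$ for the upper bounds, and the dilation $s\ast u$ to read off the geometry. The differences are organizational, and in each case you are more complete. Where the paper evaluates $\inf_{\mathcal{P}_{0}}I_0$ through the minimax identity $\inf_{u\in S_c^+}\max_{s}I_0(s\ast u)=\inf_{u\in S_c^+}\frac{a^2}{4}\frac{|\nabla u|_2^4}{|u|_4^4-b|\nabla u|_2^4}$ over the set $S_c^+$ where the fiber map admits a maximum, you use the equivalent but more direct reduction $I_0(u)=\frac{a}{4}|\nabla u|_2^2$ on $\mathcal{P}_{0}$ together with the bound $|\nabla u|_2^2\geq aS^2/(1-bS^2)$ forced by $P_0(u)=0$ and Sobolev; your projection formula $e^{2s_n}=a|\nabla w_n|_2^2/(|w_n|_4^4-b|\nabla w_n|_2^4)$ reproduces exactly the quotient the paper minimizes, so the two computations coincide. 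You prove $\inf_{S_c}I_0=-\infty$ for $0<b<S^{-2}$ directly (a bubble whose Sobolev quotient exceeds $b$, then $s\to+\infty$), whereas the paper outsources this to Theorem 1.1(a) of Li et al.; and you make explicit the non-attainment of $\Lambda$ via the equality case of the Sobolev inequality and the failure of $U_{\varepsilon,y}\in L^2(\mathbb{R}^4)$, a point the theorem asserts but the paper's proof leaves implicit. The bookkeeping you flag at the end is harmless: the quotient $|u|_4^4/|\nabla u|_2^4$ is invariant under both the $L^2$-renormalization and the dilation $s\ast u$, so truncation, mass normalization, and projection onto $\mathcal{P}_{0}$ can indeed be performed simultaneously.
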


Before stating the results on $N=4$ of the perturbation case $\mu>0$, we define some numbers as follows
\begin{equation}\label{k0}
    k_0:=\frac{2aS^2(3-q)}{(1-bS^2)(4-q)},
\end{equation}
\begin{equation}\label{c0}
    c_0:=\left[\frac{2aS^2(3-q)}{(1-bS^2)(4-q)}\right]^2\left[\frac{q(1-bS^2)}{4\mu C_q^qS^2(3-q)}\right]^{\frac{2}{4-q}},
\end{equation}and
\begin{equation}\label{c1}
    c_1:= \left [ \frac{aqk_0^{3-q}}{2\mu C_q^q(4-q)(1-bS^2)} \right ]^\frac{2}{4-q}.
\end{equation}Besides, the set $A_{k_0}(c)$ is defined by \begin{equation}\label{Ak}
    A_{k_0}(c):=\{u\in \bar{S}_c:|\nabla u|_2^2<k_0\}.
\end{equation}
and the auxiliary functional  $J:H^1(\mathbb{R}^4)\to \mathbb{R}$ is defined by
\begin{equation}\label{Ju}
    J(u):=\frac{a}{2(1-bS^2)}|\nabla u|^2_2+\frac{b}{4(1-bS^2)}|\nabla u|^4_2-\frac{\mu}{q}|u|^q_q-\frac{1}{4}|u|^{4}_{4}.
\end{equation}

\begin{theorem}\label{th6}
    Assume that $N=4$, $0<b<S^{-2}$, $\mu>0$, $2<q<3$ and $c\in(0,c_0)$.
     \begin{itemize}
        \item [$(i)$] Equation \eqref{p1.1} has a positive local minimizer solution $\tilde{u}$ in $\bar{S}_c$ for some $\tilde{\lambda}_c>0$. Moreover, $\tilde{u}$ satisfies that \begin{equation*}
        \tilde{u}\in A_{k_0}(c), \ \ I(\tilde{u})<0.
    \end{equation*}
        \item [$(ii)$] The auxiliary functional $J(u)$ has a positive local minimizer $\bar{u}$ in $\bar{S}_c$ for some $\bar{\lambda}_c>0$.  Moreover, $\bar{u}$ satisfies that \begin{equation*}
        \bar{u}\in A_{k_0}(c), \ \ J(\bar{u})<0.
    \end{equation*}
\end{itemize}
\end{theorem}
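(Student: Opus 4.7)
The plan is to localise the minimisation on the open set $A_{k_0}(c)$ defined in \eqref{Ak} and exploit a geometric trap: a strictly negative interior infimum lying below a non-negative boundary value, after which Ekeland's variational principle together with a compactness argument produces the minimiser. Parts (i) and (ii) share the same scheme, with the functional interchanged.

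Combining the Gagliardo-Nirenberg inequality \eqref{G-N} (which for $N=4$ gives $q\delta_q = 2q-4$) with the Sobolev inequality \eqref{S}, I first derive, for every $u \in \bar S_c$ and with $t := |\nabla u|_2^2$,
\[
I(u) \geq \frac{a}{2}t - \frac{1-bS^2}{4S^2} t^2 - \frac{\mu}{q}C_q^q c^{(4-q)/2} t^{q-2},
\]
and
\[
J(u) \geq \frac{a}{2(1-bS^2)} t + \frac{2bS^2 - 1}{4S^2(1-bS^2)} t^2 - \frac{\mu}{q}C_q^q c^{(4-q)/2} t^{q-2}.
\]
A short algebraic simplification using \eqref{k0} reduces the pure geometry part at $t = k_0$ to $\frac{ak_0}{2(4-q)}$ for $I$ and to $\frac{ak_0[1-bS^2(q-2)]}{2(1-bS^2)^2(4-q)}$ for $J$, the latter being positive because $q<3$ and $bS^2<1$. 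The definition of $c_0$ in \eqref{c0} is tailored precisely so that $I(u) \geq 0$ on the boundary $\partial A_{k_0}(c) = \{u \in \bar S_c : |\nabla u|_2^2 = k_0\}$ for all $c \leq c_0$, and the same threshold suffices for $J$. In the opposite direction, for any $u \in \bar S_c$ the dilation $s \ast u$ preserves the mass and satisfies $I(s \ast u), J(s \ast u) \to 0^-$ with $|\nabla(s \ast u)|_2 \to 0$ as $s \to -\infty$; this exploits $2q - 4 < 2$, which is precisely the mass-subcriticality condition $q < 3$. Hence both interior infima are strictly negative, yielding the desired trap below a non-negative boundary.

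Ekeland's variational principle applied on $\overline{A_{k_0}(c)}$ then produces a bounded Palais-Smale sequence $\{u_n\} \subset A_{k_0}(c)$ with Lagrange multipliers $\lambda_n$; Schwarz symmetrisation allows me to replace each $u_n$ by a positive, radial, radially decreasing function. By the compact Strauss embedding, $u_n \to \tilde u$ in $L^r(\mathbb R^4)$ for every $r \in (2,4)$, and testing the equation against $u_n$ gives $\lambda_n \to \tilde \lambda_c$. The key step is to recover strong convergence in $D^{1,2}$ and $L^4$: Brezis-Lieb splitting of $|\nabla u_n|_2^2$ and $|u_n|_4^4$, combined with the strict inequality $I(u_n) \to m < 0$ and the fact that the limit equation (Theorem \ref{th5}) admits no $L^2$-constrained solution with energy below the positive floor $\Lambda = \frac{a^2 S^2}{4(1-bS^2)}$, rules out concentration into a Sobolev critical Aubin-Talenti profile. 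Hence $u_n \to \tilde u$ strongly in $H^1(\mathbb R^4)$; the limit $\tilde u$ lies in $\bar S_c \cap A_{k_0}(c)$ (strict containment, because $I(\tilde u) = m < 0$ while $I \geq 0$ on the boundary), solves the Euler-Lagrange equation with multiplier $\tilde\lambda_c$, is strictly positive by the strong maximum principle, and a Pohozaev-type identity forces $\tilde\lambda_c > 0$. Part (ii) follows verbatim with $I$ replaced by $J$.

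The genuine obstacle is the compactness step. In the doubly critical dimension $N = 4$ the Sobolev exponent $2^* = 4$ coincides with the $L^2$-critical Kirchhoff exponent $2 + 8/N = 4$, so the Kirchhoff nonlocal term and the Sobolev term compete at the same scale, and a concentrating bubble could in principle steal all the critical mass. The strict inequality between the negative infimum and the positive bubble-energy floor $\Lambda$ supplied by the restriction $c < c_0$ is exactly what prevents this, and verifying it quantitatively is the heart of the proof.
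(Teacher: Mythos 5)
Your proposal is correct in its overall architecture and lands on the same geometric trap as the paper (negative infimum on $A_{k_0}(c)$ versus a strictly positive boundary value, both consequences of $f_c(k_0)>0$ for $c<c_0$; your boundary computations for $I$ and $J$ check out), but the compactness step is handled by a genuinely different — and heavier — mechanism than the one the paper uses. The paper works directly with a minimizing sequence on $\bar S_c$: it excludes vanishing by Lions' lemma, excludes dichotomy by the strict subadditivity and continuity of $c\mapsto m(c)$ (Lemmas \ref{lem3.4}--\ref{lem3.5}), and then kills the critical bubble by the purely variational observation that $v_n=\tilde u_n-\tilde u$ inherits the bound $|\nabla v_n|_2^2<k_0$, on which $\tfrac{a}{2}t-\tfrac{1-bS^2}{4S^2}t^2\ge t\bigl(\tfrac{a}{2}-\tfrac{1-bS^2}{4S^2}k_0\bigr)\ge 0$; no Euler--Lagrange equation, Pohozaev identity, or energy threshold $\Lambda$ is needed for Theorem \ref{th6}. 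You instead run Ekeland plus symmetrization to get a radial Palais--Smale sequence and invoke the bubble-energy floor $\Lambda$. That route does work, and it is essentially the paper's proof of Theorem \ref{th7}, where no a priori gradient bound is available — so your method is more general but pays for it.

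Two points in your compactness step need to be supplied before the argument closes. First, "the limit equation admits no solution with energy below $\Lambda$" does not by itself forbid bubbling: you must convert it into the quantitative chain $m(c)=I(\tilde u)+(\text{bubble energy})+o(1)\ge I(\tilde u)+\Lambda$ and then bound $I(\tilde u)\ge m(|\tilde u|_2^2)\ge m(c)$, which requires both the Pohozaev information $P(u_n)\to0$ (so the PS sequence must be built with this property, e.g.\ by first projecting onto $\mathcal P$) and the monotonicity of $c\mapsto m(c)$ coming from the subadditivity lemma you never state. Second, and more cheaply, you could bypass $\Lambda$ entirely by noting what your own setup already gives you: since $u_n\in A_{k_0}(c)$, any surviving bubble would need gradient mass $l\ge \frac{aS^2}{1-bS^2}>k_0\ge l$, a contradiction — this is the elementary trap the paper exploits, and it makes the energy comparison superfluous for the local minimizer.
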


\begin{remark}
    In fact, Theorem \ref{th6} $(i)$ is similar to Theorem 1.1 in \cite{kong2023normalized},  but the proof in \cite{kong2023normalized} relies on the decomposition of the Pohozaev manifold, while our method does not require such a decomposition. Additionally, for any $u\in S_c$, we have $J(u)\geq I(u)$ under the condition $0<b<S^{-2}$. Therefore, similar to the proof of Theorem \ref{th6} $(i)$, we can prove Theorem \ref{th6} $(ii)$, which plays an important role in the proof of next Theorem \ref{th7}.  In particular, for $b>S^{-2}$ and $2<q<3$, it is easy to check that $I(u)$ has a convex structure, which means that $I(u)$ is coercive. By the minimization arguments, \cite{zhang2022normalized} proved that equation \eqref{p1.1} has a global minimizer for all $c>0$.
\end{remark}

\begin{theorem}\label{th7}
     Assume that $N=4$, $0<b<S^{-2}$, $\mu>0$, $2<q<3$ and $c\in(0,\min\{c_0,c_1\})$. Then equation \eqref{p1.1} has a mountain pass normalized solution $\hat{u}$ in  $S_c$ for some $\hat{\lambda}_c>0$. Moreover, $\hat{u}$ satisfies that $$0<I(\hat{u})<J(\bar{u})+\Lambda.$$
\end{theorem}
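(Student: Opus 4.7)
The plan is to construct a mountain pass above the local minimizer $\tilde u$ obtained in Theorem \ref{th6}\,(i). For $0<b<S^{-2}$, $2<q<3$ and $c\in(0,c_0)$, the constrained functional $I|_{\bar S_c}$ has a convex--concave--convex profile along the radial dilation $s\ast u$: a negative local minimum at $\tilde u\in A_{k_0}(c)$, a strictly positive barrier on $\{u\in \bar S_c:|\nabla u|_2^2=k_0\}$ coming from \eqref{k0}--\eqref{c0}, and decay $I(s\ast u)\to-\infty$ as $s\to+\infty$ since $0<b<S^{-2}$. Restricting to the radial sector and picking $u_1:=s_1\ast\tilde u$ with $s_1$ large enough that $I(u_1)<I(\tilde u)$ and $|\nabla u_1|_2^2>k_0$, I set
\[
\Gamma:=\{\gamma\in C([0,1],S_c):\gamma(0)=\tilde u,\ \gamma(1)=u_1\},\qquad c_{mp}:=\inf_{\gamma\in\Gamma}\max_{t\in[0,1]}I(\gamma(t)),
\]
so that the positive barrier forces $c_{mp}>0$.

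The crucial ingredient is the strict upper bound $c_{mp}<J(\bar u)+\Lambda$. I would construct a competing path joining $\tilde u$ to $u_1$ that passes through the two--parameter family $s\ast(\bar u+\tau U_\varepsilon)$, where $\bar u$ is the local minimizer of $J$ from Theorem \ref{th6}\,(ii) and $U_\varepsilon$ is a truncated, $L^2$--renormalized Aubin--Talenti bubble concentrating at a point disjoint from $\mathrm{supp}\,\bar u$. The standard $N=4$ asymptotics $|\nabla U_\varepsilon|_2^2=S^2+o(1)$, $|U_\varepsilon|_4^4=S^2+o(1)$, $|U_\varepsilon|_2^2=o(1)$, together with the algebraic identity
\[
J(\bar u)+\Lambda-I(\bar u)\;=\;\frac{S^{2}\bigl(a+b|\nabla\bar u|_2^{2}\bigr)^{2}}{4(1-bS^{2})}
\]
(a direct expansion from \eqref{Ju} and the definition of $\Lambda$ in Theorem \ref{th5}) allow me to compute the peak of $s\mapsto I(s\ast(\bar u+\tau U_\varepsilon))$ in closed form and, after optimizing in $\tau$, to verify that the peak stays strictly below $J(\bar u)+\Lambda$ provided $c<c_1$ controls the Gagliardo--Nirenberg contribution of the $L^q$ term. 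The principal obstacle is handling the nonlocal cross--term $|\nabla(\bar u+\tau U_\varepsilon)|_2^4=|\nabla\bar u|_2^4+2|\nabla\bar u|_2^2|\nabla U_\varepsilon|_2^2+|\nabla U_\varepsilon|_2^4+o(1)$ in the expansion, since this mixing is precisely what is absorbed by the factor $(1-bS^2)^{-1}$ in the definition of $J$.

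With the geometry in place, I apply the Jeanjean scheme to the augmented functional $\widetilde I(s,u):=I(s\ast u)$ on $\mathbb R\times S_c$. A minimax sequence of paths combined with Ekeland's variational principle yields a radial Palais--Smale sequence $\{u_n\}\subset S_c$ with $I(u_n)\to c_{mp}$, $I|_{S_c}'(u_n)\to 0$ and $P(u_n)\to 0$. The Pohozaev condition together with $q<3$ and $c<c_0$ yields $H^1$--boundedness, so along a subsequence $u_n\rightharpoonup\hat u$ in $H_{rad}^1(\mathbb R^4)$, $\lambda_n\to\hat\lambda_c$, and $\hat u$ weakly solves \eqref{p1.1}; testing against $\hat u$ and using Pohozaev shows $\hat\lambda_c>0$ under $c<\min\{c_0,c_1\}$.

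Finally, strong convergence is recovered via Brezis--Lieb. If $w_n:=u_n-\hat u\not\to 0$ in $H^1$, then Strauss compactness gives $|w_n|_q\to 0$, while bubbling produces $|\nabla w_n|_2^2\to\tau^2 S^2$ and $|w_n|_4^4\to\tau^4 S^2$ for some $\tau>0$. The expansion $|\nabla u_n|_2^4=|\nabla\hat u|_2^4+2|\nabla\hat u|_2^2|\nabla w_n|_2^2+|\nabla w_n|_2^4+o(1)$, combined with the Pohozaev identity for $\hat u$ read off from the weak limit of the equation, pins down $\tau^2=(a+b|\nabla\hat u|_2^2)/(1-bS^2)$ and yields
\[
c_{mp}\;=\;I(\hat u)+\frac{S^{2}(a+b|\nabla\hat u|_2^{2})^{2}}{4(1-bS^{2})}\;=\;J(\hat u)+\Lambda\;\ge\;J(\bar u)+\Lambda,
\]
where the last inequality uses the smallness $c<\min\{c_0,c_1\}$ to confine $\hat u$ to the region in which $\bar u$ realizes the infimum of $J$. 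This contradicts the upper bound from the second paragraph, so $u_n\to\hat u$ strongly in $H^1(\mathbb R^4)$, and $\hat u\in S_c$ is the desired mountain pass solution with $0<I(\hat u)=c_{mp}<J(\bar u)+\Lambda$.
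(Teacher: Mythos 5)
Your overall strategy coincides with the paper's: a mountain pass geometry separated by the barrier on $\partial(A_{k_0}(c))$, the auxiliary functional $J$ and its local minimizer $\bar u$ from Theorem \ref{th6}\,(ii), a test function obtained by superposing $\bar u$ with truncated Aubin--Talenti bubbles to prove the threshold $M(c)<\bar m(c)+\Lambda$, a Palais--Smale sequence with $P(u_n)\to0$, and a Brezis--Lieb splitting for compactness; the algebraic identity $J(\bar u)+\Lambda-I(\bar u)=\frac{S^2(a+b|\nabla\bar u|_2^2)^2}{4(1-bS^2)}$ is correct and is exactly the content of \eqref{t_*-inequality}. However, there is a genuine gap in the key energy estimate. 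You propose to let the bubble concentrate ``at a point disjoint from $\mathrm{supp}\,\bar u$''. First, no such point exists: $\bar u$ is a positive solution of \eqref{new_equation}, hence $\bar u>0$ on all of $\mathbb R^4$ by the strong maximum principle. More importantly, decoupling the bubble from $\bar u$ destroys the strictness of the inequality. In dimension $4$ the error terms coming from truncation and $L^2$-renormalization are of size $O(\varepsilon^2|\log\varepsilon|)$ (equivalently $O(\log(1+n^2)/n^2)$ in the paper's normalization), and the only competing term that dominates them is the interaction $\int_{\mathbb R^4}\bar u\,U_\varepsilon^3\,dx$, of order $\varepsilon$, which requires $\bar u$ to be bounded below near the concentration point; this is precisely the paper's estimate \eqref{uU3}, $\int_{\mathbb R^4}\bar u_cU_n^3\,dx\ge B_0/n$. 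With a disjoint (or distant) bubble this term degenerates and you only obtain $M(c)\le\bar m(c)+\Lambda$, which is not enough to exclude the loss of compactness.

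A secondary issue: in the final contradiction you write $c_{mp}\ge J(\hat u)+\Lambda\ge J(\bar u)+\Lambda$ and attribute the last inequality to ``confinement'' of $\hat u$. The inequality $J(\hat u)\ge\bar m(c)$ follows from the minimality of $\bar u$ only when $|\nabla\hat u|_2^2<k_0$ (combined with the monotonicity and continuity of $c\mapsto\bar m(c)$ from Lemma \ref{lem3.5}, since a priori one only knows $|\hat u|_2^2\le c$). The weak limit need not lie in $\overline{A_{k_0}(c)}$, and the complementary case $|\nabla\hat u|_2^2\ge k_0$ must be excluded by a separate argument; this is exactly where the hypothesis $c<c_1$ enters in the paper's proof (not in the energy estimate, where only $c<c_0$ is used). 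As written, your argument leaves this case open, and also asserts an exact bubbling identity $|\nabla w_n|_2^2\to\tau^2S^2$ with $\tau^2=(a+b|\nabla\hat u|_2^2)/(1-bS^2)$ where the Sobolev inequality only yields the one-sided bound $l\ge\frac{(a+b|\nabla\hat u|_2^2)S^2}{1-bS^2}$ (or $l=0$); the one-sided bound suffices, but the equality claim is unjustified.
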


\begin{remark}\label{rem1.5}
      For  the case $b=0$, the authors in \cite{chen2024another, jeanjean2022multiple,wei2022normalized}, to prove the existence of the mountain pass solution for $2<q<3$, they constructed different test functions to obtain the same threshold of mountain pass level as follows $$M(c)<m(c)+\frac{S^2}{4},$$ which is a crucial step in recovering the compactness of Palais-Smale sequences, where $M(c)$ is the mountain pass level and $m(c)$ is the ground state energy. Indeed, many properties of the functional are preserved under small perturbations. Therefore, when $0<b<S^{-2}$, we also establish a threshold of the mountain pass level to prove Theorem \ref{th7}.
\end{remark}

\begin{remark}\label{rem1.4}
    In the case $N=3$ and $2<q<\frac{10}{3}$, the authors in \cite{feng2023normalized} and \cite{chen2024normalized} established different thresholds of the mountain pass level to obtain the mountain pass solution. The former considered the superposition of the ground state solution of equation \eqref{p1.1} with $b=0$ and the Aubin-Talenti bubbles to construct a test function in $S_c$. After some delicate calculations, the following strict inequality was obtained $$M(c)<m(c)+\frac{(aS)^{\frac{3}{2}}}{3}.$$ The latter introduced an auxiliary functional and
proved the existence of its local minimizer. By combining this minimizer with the Aubin-Talenti bubbles, a different test functional was constructed, leading to the following strict inequality

$$M(c)<\bar{m}(c)+\frac{abS^3}{4}+\frac{b^3S^6}{24}+\frac{(b^2S^4+4aS)^\frac{3}{2}}{24},$$
where $\bar{m}(c)$ is the energy of minimizer. In the proof of Theorem \ref{th2}, inspired by  \cite{chen2024normalized}, we also introduce an auxiliary function $J(u)$ defined in \eqref{Ju} and prove its local minimizer. Then, we consider the superposition of this minimizer with Aubin-Talenti bubbles to construct a test function, ultimately obtaining the following threshold of mountain pass level  for $N=4$,
$$M(c)<\bar{m}(c)+\frac{a^2S^2}{4(1-bS^2)}.$$
In particular, when $a=1$ and $b=0$, the above inequality degenerates into the threshold of mountain pass level corresponding to the Schr\"odinger equation mentioned in Remark \ref{rem1.5}.
\end{remark}

This paper is organized as follows. In Section \ref{sec2}, we present the proof of the pure critical Kirchhoff equation, that is,  $\mu=0$, $N\geq5$ and $N=4$, and complete the proof of Theorem \ref{th1.1}, Theorem \ref{th3} and  Theorem \ref{th5}. In Section \ref{sec3}, we establish the compactness result of the Palais-Smale sequences for $I(u)$ and complete the proof of Theorem \ref{th2}.  Finally, we give the proof of Theorems \ref{th6} and \ref{th7} in Sections \ref{sec6}. We introduce some notations that will be used in this paper.
\begin{itemize}
  \item $H_{rad}^1(\mathbb R^N)=\{u\in H^1(\mathbb R^N): u(x)=u(|x|)\}$.
  \item $D^{1,2}(\mathbb R^N)$ is the completion of $C_0^{\infty}(\mathbb R^N)$ with respect to  $\left\|u \right\|_{D^{1,2}}=|\nabla u|_2$.
  \item $|\cdot|_q$ is the standard norm in $L^q(\mathbb R^N)$ for $q\geq1$.
  \item $o(1)$ means a quantity that tends to 0.
  \item $\rightarrow$ and $\rightharpoonup$ denote the strong convergence and weak convergence, respectively.
  \item For any $x\in\mathbb R^N$ and $r>0$, $B_r(x):=\{y\in \mathbb R^N:|y-x|<r\}$.
  \item $C$, $M$ are positive constants possibly different in different places.
\end{itemize}

\section{The proof of Theorems \ref{th1.1},  \ref{th3} and Theorem \ref{th5}}\label{sec2}
In this section, we mainly introduce the results of  the pure critical Kirchhoff equation, i.e., the case $\mu=0$, which will be used in the rest of the paper.
\begin{proof}[The proof of Theorem  \ref{th1.1}]
	For $N\geq5$,  we assume that
		$$E_0(u)=\frac{1}{2}|\nabla u|^2_2-\frac{1}{2^*}|u|^{2^*}_{2^*}$$
	and
		$\mathcal{P}_{c}:=\{u\in S_c: |\nabla u|_2^2=|u|_{2^*}^{2^*}\}.$
	By Proposition 2.2 in \cite{soave2020sobolev}, we know that $\inf_{u\in\mathcal{P}_{c}}E_0(u)=\frac{1}{N}S^{\frac{N}{2}}$ is achieved by $U_{\varepsilon_c,0}$, which
is defined in \eqref{U} for the unique choice of $\varepsilon_c>0$ such that $|U_{\varepsilon_c,0}|_2=c$.

	$(i)$ For $0<b<b_0$, we denote the two roots of $g(t)=0$ by $t_\pm$, $0<t_-<t_+$, where
	\begin{equation}\label{gt}
		g(t):=bS^{\frac{N}{2}}t^2-t^{\frac{4}{N-2}}+a.
	\end{equation}
	Moreover, $g'(t_-)<0$ and $g'(t_+)>0$.
	Let $s_\pm$ satisfy $e^{s_\pm}=t_\pm$, and set that
	
		$$\varphi_\pm(x):= (s_{\pm}\ast U_{\varepsilon_c,0})(x)=e^{\frac{N}{2}s_{\pm}}U_{\varepsilon_c,0}(e^{s_{\pm}}x)\in S_c.$$
By the above setting and the fact that $\pm g'(t_\pm)>0$, we can check that $\varphi_\pm\in \mathcal{P}^{\pm}_{0}$.
	Thanks to the assumption $0<b<b_0$, we define $\xi_\pm$ ($\xi_-<\xi_+$) as the roots of equation $f(t)=0$, where
	\begin{equation}\label{ft}
		f(t):=bt^2-S^{-\frac{N}{N-2}}t^{\frac{4}{N-2}}+a.
	\end{equation}
	Moreover, $f'(\xi_-)<0$,  $f'(\xi_+)>0$, and
    \begin{equation}\label{xi}
        \xi_\pm=S^{\frac{N}{4}}t_\pm=|\nabla\varphi_\pm|_2.
    \end{equation}
For any $u\in\mathcal{P}_{0}$, by \eqref{S}, we have
	\begin{equation*}
		a|\nabla u|_2^2+b|\nabla u|_2^4=|u|_{2^*}^{2^*} \leq S^{-\frac{2^*}{2}}|\nabla u|_{2}^{2^*},
	\end{equation*}
	which implies $\xi_-\leq |\nabla u|_2\leq \xi_+$. It follows that
	\begin{equation*}
		\begin{split}
			(\Psi_u^{0})''(0)&=2a|\nabla u|_2^2+4b|\nabla u|_2^4-2^*|u|_{2^*}^{2^*}-2^*(\Psi_u^{0})'(0) \\
			&=\frac{2(N-4)}{N-2}b|\nabla u|_2^2\left(|\nabla u|_2^2-\frac{2a}{(N-4)b}\right)\\
			&=\frac{2(N-4)}{N-2}b|\nabla u|_2^2\left(|\nabla u|_2^2-\eta^2\right),
		\end{split}
	\end{equation*}
	where $\eta:=(\frac{2a}{(N-4)b})^{1/2}$. Then we know that
	\begin{equation*}
	    \begin{split}
			&\mathcal{P}^-_{0}=\{u\in \mathcal{P}_{0}: \xi_-\leq |\nabla u|_2<\eta\},  \\& \mathcal{P}^+_{0}=\{u\in \mathcal{P}_{0}: \eta< |\nabla u|_2\leq \xi_+\},\\
			&\mathcal{P}^0_{0}=\{u\in \mathcal{P}_{0}: |\nabla u|_2=\eta\},
		\end{split}
	\end{equation*}
	For any $u\in\mathcal{P}_{0}$, then we have
	\begin{equation}\label{CN-}
		\begin{split}
			\inf_{u\in\mathcal{P}^-_{0}}I_0( u)&=\inf_{u\in\mathcal{P}^-_{0}}\left\{\frac{a}{2}|\nabla u|_2^2+\frac{b}{4}|\nabla u|_2^4-\frac{1}{2^*}|u|_{2^*}^{2^*}-\frac{1}{2^*}(\Psi_u^{0})'(0)\right\}\\
			&=\inf_{u\in\mathcal{P}^-_{0}}\left\{\frac{1}{N}a|\nabla u|_2^2-\frac{N-4}{4N}b |\nabla u|_2^4\right\}\\&=\frac{1}{N}a\xi_-^2-\frac{N-4}{4N}b \xi_-^4=c_{N,-}=I_0(\varphi_-).
		\end{split}
	\end{equation}
	Similarly, it follows that
	\begin{equation}\label{CN+}
		\begin{split}
			&\inf_{u\in\mathcal{P}^+_{0}}I_0( u)=\frac{1}{N}a\xi_+^2-\frac{N-4}{4N}b \xi_+^4=c_{N,+}=I_0(\varphi_+).
		\end{split}
	\end{equation}
	Moreover, by Theorem 1.2 in Xie and Zhou \cite{xie2022study}, we know that
	$c_{N,-}>0$, $c_{N,-}>c_{N,+}$ for any $0<b<b_0$, $c_{N,+}\leq0$ for $0<b\leq b_1$ and $c_{N,+}>0$ for $b_1<b<b_0$.

    We claim that
        $$\inf_{u\in A_{\xi_-}}I_0(u)=0\ \ {\rm and} \ \ \inf_{u\in A_{\xi_-}^c}I_0(u)=I_0(\varphi_+),$$
    where $A_{\xi_-}:=\{u\in S_c: |\nabla u|_2<\xi_-\}$ and $\xi_-$ is a constant defined by \eqref{xi}. Moreover, $$\inf_{u\in S_c}I_0(u)=\begin{cases}
		I_0(\varphi_+), & {\rm if}\ \ b\in(0,b_1],\\
		0,&{\rm if}\ \ b\in(b_1,b_0).
	\end{cases}$$
    Then, $\varphi_+$ is a global minimum solution in $S_c$ for $0<b\leq b_1$, and a local minimum solution in $S_c$ for $b_1<b<b_0$.

    In fact,  by \eqref{S} and direct computations, we obtain that
    $$I_0(u)\geq\frac{a}{2}|\nabla u|_2^2+\frac{b}{4}|\nabla u|_2^4-\frac{S^{-\frac{2^*}{2}}}{2^*}|\nabla u|_2^{2^*}=h(|\nabla u|_2)>0 \ \text{for\  any}\  u\in A_{\xi_-},$$
    where $h(t)$ is defined in \eqref{h}. Hence, $\inf_{u\in A_{\xi_-}}I_0(u)\geq0$. For any $s\in\mathbb R$, set that
    \begin{equation}\label{us}
        u_s(x):=(s* U_{\varepsilon_c,0})(x)=e^{\frac{N}{2}s}U_{\varepsilon_c,0}(e^{s}x)\in S_c.
    \end{equation}Since $|\nabla u_s|_2=e^sS^\frac{N}{4}<\xi_-$ for $s<-M$, one obtains that $u_s\in A_{\xi_-}$ and \begin{equation}\label{Ius}
        I_0(u_s)=e^{2s}\frac{a}{2}S^\frac{N}{2}+e^{4s}\frac{b}{4}S^N-e^{2^*s}\frac{1}{2^*}S^\frac{N}{2}\to0 \ \ {\rm as}\ \ s\to-\infty,
    \end{equation}which implies $\inf_{u\in A_{\xi_-}}I_0(u)=0$. Furthermore, since $I_0(u)\geq h(|\nabla u|_2)>0$  for any $u\in A_{\xi_-}$, $\inf_{u\in A_{\xi_-}}I_0(u)$ has no minimizer.

    On the one hand, for any $u\in \mathcal{P}_{0}^+$, we have $|\nabla u|_2>\eta>\xi_-$, which implies $ \mathcal{P}_{0}^+\subset A_{\xi_-}^c$.Then, it follows that $\inf_{u\in \mathcal{P}_{0}^+}I_0(u)\geq\inf_{u\in A_{\xi_-}^c}I_0(u)$. On the other hand, from Remark 1.5 in \cite{xie2022study}, we also know that
    \begin{equation}\label{h_xi_+}
        \inf_{u\in A_{\xi_-}^c}I_0(u)\geq\inf_{t\geq\xi_-}h(t)=h(\xi_+)=c_{N,+}=\inf_{u\in \mathcal{P}_{0}^+}I_0(u).
    \end{equation} Thus, $\inf_{u\in A_{\xi_-}^c}I_0(u)=I_0(\varphi_+)=c_{N,+}$ holds. Moreover, by the fact that $c_{N,+}\leq0$ for $0<b\leq b_1$ and $c_{N,+}>0$ for $b_1<b<b_0$, we have $$\inf_{u\in S_c}I_0(u)=\begin{cases}
		I_0(\varphi_+), & {\rm if}\ \ b\in(0,b_1],\\
		0,&{\rm if}\ \ b\in(b_1,b_0).
	\end{cases}$$
We also claim that $\varphi_-$ is a mountain pass solution in $S_c$ and $c_{N,-}$ is the  mountain pass level. In fact, by \eqref{us}, \eqref{Ius} and $c_{N,-}>0$, one can find a $ s_0<-M$ such that $u_{s_0}\in A_{\xi_-}$ and $I_0(u_{s_0})<c_{N,-}=I_0(\varphi_-)$. Combining with $c_{N,+}<c_{N,-}$ and $|\nabla\varphi_-|_2=\xi_-<\xi_+=|\nabla\varphi_+|_2$, we can give that $$\max\{I_0(u_{s_0}),I_0(\varphi_+)\}<I_0(\varphi_-)\leq\inf_{u\in\partial A_{\xi_-}}I_0(u).$$
    Thus, we can define a mountain pass level as following
    $$m_-=\inf_{\gamma\in\Gamma}\sup_{t\in[0,1]}I_0(\gamma(t)),$$
    where $\Gamma=\{\gamma(t)\in C([0,1])\cap S_c:\gamma(0)=u_{s_0} \ {\rm and} \ \gamma(1)=\varphi_+\}$. Then there holds $$m_-=c_{N,-}.$$ Since $|\nabla\gamma(0)|_2<\xi_-<|\nabla\gamma(1)|_2$ for any $\gamma\in \Gamma$, there exists $t_\gamma\in(0,1)$ such that $\gamma(t_\gamma)\in\partial A_{\xi_-}$. Thus, it follows that $$\sup_{t\in[0,1]}I_0(\gamma(t))\geq I_0(\gamma(t_\gamma))\geq\inf_{u\in\partial A_{\xi_-}}I_0(u)\geq I_0(\varphi_-),$$ which implies $m_-\geq c_{N,-}$. Setting that $r(t)=ts_++(1-t)s_0$ and $\gamma_1(t)=u_{r(t)}\in C([0,1])\cap S_c$ in \eqref{us}, where $$\gamma_1(0)=u_{r(0)}=s_0*U_{\varepsilon_c,0}=u_{s_0} \ \ {\rm and} \ \ \gamma_1(1)=u_{r(1)}=s_+*U_{\varepsilon_c,0}=\varphi_+,$$ then there holds $\gamma_1(t)\in \Gamma$ and $$m_-\leq\sup_{t\in[0,1]}I_0(\gamma_1(t))=\max_{t\in[0,1]}I_0(u_{r(t)})=I_0(\varphi_-),$$ which implies $m_-\leq c_{N,-}$. Therefore, $m_-=c_{N,-}$ holds.

    $(ii)$ For $b<0$, it is easy to check that the equation $g(t)=0$ has a positive root $t_1$. Moreover, we have $g'(t_1)<0$. Let $s_1$ satisfy $e^{s_1}=t_1$ and set that
        $$\varphi_1(x):= (s_1* U_{\varepsilon_c,0})(x)=e^{\frac{N}{2}s_1}U_{\varepsilon_c,0}(e^{s_1}x)\in S_c,$$
    then combining with $g'(t_1)<0$ implies $\varphi_1\in \mathcal{P}^-_{0}$. Setting that\begin{equation}\label{xi1}
        \xi_1=|\nabla\varphi_1|_2,
    \end{equation}
    then $\xi_1$ is a positive root of $f(t)=0$. Moreover, $f(t)>0$ for $t<\xi_1$ and $f(t)<0$ for $t>\xi_1$. It follows from $u\in \mathcal{P}_{0}$ and \eqref{S} that $$a|\nabla u|_2^2+b|\nabla u|_2^4=|u|_{2^*}^{2^*} \leq S^{-\frac{2^*}{2}}|\nabla u|_{2}^{2^*},$$which implies $|\nabla u|_2\geq\xi_1$. For any $u\in \mathcal{P}_{0}$, by $a>0$ and $b<0$, we have $$(\Psi_u^{0})''(0)=\frac{2(N-4)}{N-2}b|\nabla u|_2^4-\frac{4a}{(N-4)b}|\nabla u|_2^2<0,$$which implies $u\in \mathcal{P}^-_{0}$, thus $\mathcal{P}_{0}=\mathcal{P}^-_{0}$. At last, similar to \eqref{CN-}, we have
        $$\inf_{u\in\mathcal{P}_{0}}I_0( u)=\inf_{u\in\mathcal{P}^-_{0}}I_0( u)=\frac{1}{N}a\xi_1^2-\frac{N-4}{4N}b \xi_1^4=c_N=I_0(\varphi_1)>0.$$
 We claim that $\varphi_1$ is a mountain solution in $S_c$ and $c_N$ is the  mountain pass level. In fact, by \eqref{us}, one obtains that
     $$\lim_{s\to -\infty}|\nabla u_s|_2=0, \lim_{s\to -\infty}I_0(u_s)=0,\lim_{s\to +\infty}|\nabla u_s|_2=+\infty, \lim_{s\to +\infty}I_0(u_s)=-\infty.$$Then, we can take $s_2< -M$ such that $|\nabla u_{s_2}|<\xi_1$ and $I_0(u_{s_2})<c_N$, and take $s_3> M$ such that $|\nabla u_{s_3}|>\xi_1$ and $I_0(u_{s_3})<0$, where $\xi_1$ is a constant defined by \eqref{xi1}. Hence, it follows that
    $$\max\{I_0(u_{s_2}),I_0(u_{s_3})\}<I_0(\varphi_1)\leq\inf_{u\in\partial A_{\xi_1}}I_0(u).$$
    Thus, we can define a mountain pass level as following
    $$m_1=\inf_{\gamma\in\Gamma}\sup_{t\in[0,1]}I_0(\gamma(t)),$$
    where $\Gamma=\{\gamma(t)\in C([0,1])\cap S_c:\gamma(0)=u_{s_2} \ {\rm and} \ \gamma(1)=u_{s_3}\}$. Finally, it is easy to prove that $$c_N=m_1.$$

    $(iii)$ For $b>b_0$, it is easy to check that $f(t)>0$ by some direct computations.
Assume that $u$ is a critical point of $I_0(u)$ on $S_c$, we have $u\in \mathcal{P}_{0}$. Then, there holds
    $$0=a|\nabla u|_2^2+b|\nabla u|_2^4-|u|_{2^*}^{2^*}\geq a|\nabla u|_2^2+b|\nabla u|_2^4-S^{-\frac{2^*}{2}}|\nabla u|_2^{2^*}=|\nabla u|_2^2f(|\nabla u|_2)\geq0,$$
which implies $|\nabla u|_2=0$ and  $u\equiv0$ contradicts to $u\in S_{c}$.

    $(iv)$ For any $n\in\mathbb{N}$, let $\{u_i\}_{i=1}^n$ be $n$ solutions of equation \eqref{S*} satisfying \begin{equation}\label{u_inequality}
        |\nabla U|_2=|\nabla u_1|_2<|\nabla u_2|_2<\cdots<|\nabla u_n|_2.
    \end{equation}Set that
    $$v_i^\varepsilon(x):=\varepsilon^{\frac{N-2}{2}}u_i(\varepsilon x),\ \ 1\leq i\leq n,\ \ \varepsilon>0,$$ which are solutions of equation \eqref{S*} as well. Moreover, we can choose a unique $\varepsilon_i$ such that $|v_i^{\varepsilon_i}|_2=c$, i.e., $v_i^{\varepsilon_i}\in S_c$. Let $$b_n=\frac{2}{N-2}(\frac{N-4}{a(N-2)})^{\frac{N-4}{2}}|\nabla u_n|_2^{-2},$$ it is easy to cheak that the existence of two positive roots $t_{i,\pm}(t_{i,-}<t_{i,+})$ for equation $g_i(t)=0$ defined by $$g_i(t)=b|\nabla v_i^{\varepsilon_i}|_2^2t^2-t^{\frac{4}{N-2}}+a,\ \ t\geq 0,$$where $i\in\{1,2,\cdots,n\}$ and $b\in (0,b_n)$. Let $s_{i,\pm}$ satisfy $e^{s_{i,\pm}}=t_{i,\pm}$ and $$\varphi_{i,\pm}(x):=(s_{i,\pm}*v_i^{\varepsilon_i})(x)\in S_c,$$
    then we can check that $\varphi_{i,\pm}\in \mathcal{P}_{0}$. Set $\xi_{i,\pm}:=|\nabla\varphi_{i,\pm}|_2$ for $i\in\{1,2,\cdots,n\}$, then $\xi_{i,\pm}$ are the roots of equations $f_i(t)=0$ defined by $$f_i(t)=bt^2-(|\nabla v_i^{\varepsilon_i}|_2^{-1})^\frac{4}{N-2}+a,\ \ t\geq0,$$where $i\in\{1,2,\cdots,n\}$ and $b\in (0,b_n)$. With a similar argument of Lemma 3.3 in \cite{xie2022study}, one obtains $$\xi_{i,-}<\eta<\xi_{i,+}\ \ {\rm and} \ \ \xi_{i,+}^2-\eta^2>\eta^2-\xi_{i,-}^2,$$
    which implies $\varphi_{i,\pm}\in\mathcal{P}_{0}^\pm$ and $I_0(\varphi_{i,-})>I_0(\varphi_{i,+})$. By the above setting and \eqref{u_inequality}, we know that $|\nabla v_i^{\varepsilon_i}|_2<|\nabla v_{i+1}^{\varepsilon_{i+1}}|_2$ for $i\in \{1,2,\cdots,n-1\}$, which implies that $f_i(t)<f_{i+1}(t)$ and
    \begin{equation}\label{xi-inequality}
        0<\xi_{1,-}<\xi_{2,-}<\cdots<\xi_{n,-}<\eta<\xi_{n,+}<\cdots<\xi_{2,+}<\xi_{1,+}.
    \end{equation}
For any $u\in\mathcal{P}_{0}$, we have
    $$I_0(u)=I_0(u)-\frac{1}{2^*}(\Psi_u^{0})'(0)=\frac{1}{N}a|\nabla u|_2^2-\frac{N-4}{4N}b |\nabla u|_2^4:=\mathcal I_0(|\nabla u|_2^2),$$
    where $$\mathcal I_0(t)=\frac{1}{N}at-\frac{N-4}{4N}bt^2, \ \ t\geq0.$$ It is easy to check that $\mathcal I_0(t)$ is increasing in $(0,\eta^2)$ and decreasing in $(\eta^2,+\infty)$.
    Combining with $(i)$ and \eqref{xi-inequality}, one obtians $\varphi_\pm=\varphi_{1,\pm}$ and
    $$I_0(\varphi_{1,\pm})<I_0(\varphi_{2,\pm})<\cdots<I_0(\varphi_{n,\pm})<\frac{a^2}{N(N-4)b}.$$
The proof is complete.
\end{proof}

\begin{proof}[The proof of Theorem \ref{th3}]
    $(i)$ Let $u$ be a critical point of $I(u)$ on $S_c$. Then $u$ solves \eqref{p1.1} for some $\lambda\in\mathbb R$, testing \eqref{p1.1} by $u$ and interaing over $\mathbb R^N$, we have
    \begin{equation}\label{eq4.1}
    a|\nabla u|_2^2+b|\nabla u|_2^4+\lambda|u|_2^2=\mu|u|_q^q+|u|_{2^*}^{2^*}.
     \end{equation}
     Recalling that $P(u)=0$, i.e.,
     \begin{equation}\label{eq4.2}
         P(u)=a|\nabla u|_2^2+b|\nabla u|_2^4-\mu\delta_q|u|_q^q-|u|_{2^*}^{2^*}=0,
     \end{equation}
     then we infer that $\lambda|u|_2^2=\mu(1-\delta_q)|u|_q^q$ by \eqref{eq4.1} and \eqref{eq4.2}. Since $S_c\ni u \not\equiv 0$, $\mu<0$ and $0<\delta_q<1$, we conclude that $\lambda<0$. It follows from $P(u)=0$ and \eqref{S} that
    $$a|\nabla u|_2^2+b|\nabla u|_2^4=\mu\delta_q|u|_q^q+|u|_{2^*}^{2^*}\leq|u|_{2^*}^{2^*}\leq\frac{S^{-\frac{2^*}{2}}}{2^*}|\nabla u|_2^{2^*}.$$
    Thus, by \eqref{ft}, we have
    \begin{equation}\label{eq4.3}
        \xi_-<|\nabla u|_2<\xi_+.
    \end{equation}
    Moveover,
    \begin{align*}
        I(u)&= I(u)-\frac{1}{2^*}P(u)=\frac{1}{N}a|\nabla u|_2^2-\frac{N-4}{4N}b|\nabla u|_2^4-\frac{\mu}{q}(1-\frac{\delta_q q}{2^*})|u|_q^q\\
        & \geq\frac{1}{N}a|\nabla u|_2^2-\frac{N-4}{4N}b|\nabla u|_2^4\geq\frac{1}{N}a\xi_+^2-\frac{N-4}{4N}b\xi_+^4\\&=c_{N,+},
    \end{align*}
    where $0<\delta_q q<2^*$, $\mu<0$, \eqref{CN+} and \eqref{eq4.3} are used.

    $(ii)$ Assume that $u\in H^1(\mathbb R^N)\cap L^p(\mathbb R^N)$ is a solution of equation \eqref{th1.4eq}, where $p\in (0,\frac{N}{N-2}]$. By the Br\'ezis-kato argument \cite{brezis1979remarks}, we know that $u,|\Delta u|\in L^\infty(\mathbb R^N)$, and standard gradient estimates for the Poisson equation also imply that $|\nabla u|\in L^\infty(\mathbb R^N)$. It follows from $u\in L^2(\mathbb R^N)$ that
    \begin{equation}\label{ux}
        u(x)\to 0\ \ {\rm as}\ \ |x|\to\infty.
    \end{equation}
    Then, since $|\nabla u|\in L^\infty(\mathbb R^N)$, there exists $R_0>0$ such that $|\nabla u|_2^2\leq1$ for $|x|\geq R_0$. Combining with the fact that $\lambda<0$ from $(i)$, we have
    $$-\Delta u=\frac{1}{a+b|\nabla u|_2^2}(-\lambda u+\mu|u|^{q-2}u+|u|^{2^*-2}u)\geq \frac{-\lambda}{a+b}>0\ \ {\rm for} \ \ |x|>R_0.$$
    Thus, $u$ is superharmonic at infinity. Now we define
    $$m(r):=\min_{|x|=r} u(x).$$
    Since $u>0$ in $\mathbb R^N$, we know that $m(r)>0$. The Hadamard three spheres theorem in \cite{weinberger1967maximum} implies that $m(r)$ satisfies
    \begin{equation}\label{mr}
        m(r)\geq\frac{m(r_1)(r^{2-N}-r_2^{2-N})+m(r_2)(r_1^{2-N}-r^{2-N})}{r_1^{2-N}-r_2^{2-N}}\ \ {\rm for}\ \ R_0<r_1<r<r_2.
    \end{equation}By \eqref{ux}, we obtain that $m(r_2)\to0$ as $r_2\to \infty$. Let $r_2\to \infty$ in \eqref{mr}, then we know that the function $r\mapsto r^{N-2}m(r)$ is monotone non-decreasing for $r>R_0$. Therefore,
    $$m(r)\geq\frac{m(R_0)R_0^{N-2}}{r^{N-2}}\ \ {\rm for} \ \ r>R_0,$$
    which implies that
    $$|u|_p^p\geq C\int_{R_0}^{+\infty}|m(r)|^pr^{N-1}dr\geq C\int_{R_0}^{+\infty}r^{p(2-N)+N-1}dr=\infty$$
    for any $p\in (0,\frac{N}{N-2}]$, which contradicts the assumption $u\in L^p(\mathbb R^N)$ for $p\in (0,\frac{N}{N-2}]$. Thus, equation $\eqref{th1.4eq}$ has no solution. The proof is complete.
\end{proof}

\begin{proof}[The proof of Theorem \ref{th5}]
    If $b\geq S^{-2}$, then $\mathcal{P}_{0}\neq\emptyset$. Indeed, if $u\in\mathcal{P}_{0}$, we have
	$$0\leq a|\nabla u|_2^2=|u|_4^4-b|\nabla u|_2^4\leq |u|_4^4-S^{-2}|\nabla u|_2^4\leq0,$$
	which implies $u\equiv0$, contradicting to $u\in S_c$. Thus, $\mathcal{P}_{0}=\emptyset $.
	For any $u\in S_c$, one obtains$$I_0(u)=\frac{a}{2}|\nabla u|_2^2+\frac{b}{4}|\nabla u|_2^4-\frac{1}{4}|u|_4^4\geq\frac{a}{2}|\nabla u|_2^2+\frac{1}{4}(b-S^{-2})|\nabla u|_2^4>0,$$
	which implies $\inf_{u\in S_c}I_0(u)\geq0$. We also know that $s*u\in S_c$ for any $s\in \mathbb R$, then $$I_0(s*u)=\frac{ae^{2s}}{2}|\nabla u|_2^2+\frac{be^{4s}}{4}|\nabla u|_2^4-\frac{e^{4s}}{4}|u|_{4}^{4}\to 0 \ \ \rm{as} \ \ s\to-\infty.$$ Thus, $\inf_{u\in S_c}I_0(u)=0$. Using again the fact that $I_0(u)>0$ for any $u\in S_c$, we know that $\inf_{u\in S_c}I_0(u)$ can not be achieved.
	
	If $0<b<S^{-2}$,  we know that $\inf_{u\in S_c} I(u)=-\infty$ by Theorem 1.1(a) in Li et al. \cite{li2019existence}.
	In this case, setting that $S^+_c:=\{u\in S_c: |u|_4^4-b|\nabla u|_2^4>0\}$. Similarly, we can prove that $\mathcal{P}_{0}=\mathcal{P}^-_{0}$ and
	\begin{equation*}
		\inf_{u\in\mathcal{P}^-_{0}} I_0(u)=\inf_{u\in\mathcal{P}_{0}} I_0(u)=\inf_{u\in S_c^+}\max_{s\in\mathbb R} I_0(s\ast u).
	\end{equation*}
	Moreover,
    \begin{equation*}
	\begin{split}
			\inf_{u\in S_c^+}\max_{s\in\mathbb R} I_0(s\ast u)
			&=\inf_{u\in S^+_c}\frac{a^2}{4}\frac{|\nabla u|_2^4}{|u|_4^4-b|\nabla u|_2^4}\\
            &\geq\inf_{u\in H^1(\mathbb R^4)\backslash\{0\}}\frac{a^2}{4}\frac{|\nabla u|_2^4}{|u|_4^4-b|\nabla u|_2^4}=\frac{a^2}{4}\frac{S^2}{1-bS^2}=\Lambda.
		\end{split}
	\end{equation*}
	It remains to prove that
		$$\inf_{u\in S^+_c}\frac{a^2}{4}\frac{|\nabla u|_2^4}{|u|_4^4-b|\nabla u|_2^4}\leq \Lambda.$$
	We choose a radially symmetric function $\psi\in C_0^\infty(\mathbb R^4)$ with $0\leq \psi\leq 1$, $\psi=1$ for $x\in B_1(0)$ and $\psi=0$ for $x\in B_{2}^c(0)$. Moreover, we may choose $\psi$ to be non-increasing on $|x|$. Now we  define that \begin{equation*}
	   u_\varepsilon(x):=\psi(x)U_{\varepsilon,0 }(x)\ \ {\rm and} \ \ v_\varepsilon(x):=\frac{cu_\varepsilon(x)}{|u_\varepsilon|_2}\in S_c,
	\end{equation*}where $U_{\varepsilon,0 }(x)$ is given in \eqref{U}. We choose $\varepsilon=1$ and define $K_1:=|\nabla U_{1,0 }|_2^2$, $K_2:=|U_{1,0 }|_4^2$. From Appendix A in \cite{soave2020sobolev}, we have \begin{equation*}
	    \frac{K_1}{K_2}=S,\ \ |\nabla u_\varepsilon|_2^2=K_1+O(\varepsilon), \ \ |u_\varepsilon|_4^2=K_2+O(\varepsilon^2).
	\end{equation*} Then, let $\varepsilon$ be small enough, we can check that
	$$\frac{|v_\varepsilon|_4^4}{|\nabla v_\varepsilon|_2^4}=S^{-2}+O(\varepsilon^2)>b,$$
	which implies $v_\varepsilon\in S_c^+$. Thus,
	
		$$\inf_{u\in S^+_c}\frac{a^2}{4}\frac{|\nabla u|_2^4}{|u|_4^4-b|\nabla u|_2^4}\leq\frac{a^2}{4}\frac{|\nabla v_\varepsilon|_2^4}{|v_\varepsilon|_4^4-b|\nabla v_\varepsilon|_2^4}=\frac{a^2}{4}\left(\frac{S^2}{1-bS^2}+O(\varepsilon)\right)=\Lambda+O(\varepsilon).$$
The proof is complete.
\end{proof}

\section{The proof of Theorem \ref{th2}}\label{sec3}
First, we present the compactness result of the Palais-Smale sequences.
\begin{proposition}\label{pro1}
	Assume that $N\geq5$, $0<b<b_0$, $\mu>0$ and $2<q<2^*$. If $\{u_n\}\subset S_{c}$ is a Palais-Smale sequence for $I|_{S_c}$ at energy level $m\neq0$, with $m<c_{N,+}$ and
	$$ P(u_n)\rightarrow 0\ \ \ as \ \ n\rightarrow+\infty, $$
	where $c_{N,+}$ is defined by \eqref{th1eq1}. Then, up to subsequence, one of the following alternatives holds,
	\begin{itemize}
		\item [$(i)$] either $u_n\rightharpoonup u\neq0$ weakly in $H^1_{rad}(\mathbb R^N)$ but not strongly, where $u$ solves
		$$-\Big(a+bA^2\Big) \Delta u+\lambda u=\mu |u|^{q-2}u+|u|^{2^*-2}u \ \ in\ \ \mathbb{R}^N,$$
		for some $\lambda>0$ and $m-c_{N,+}\geq K(u):=\left(\frac{a}{2}+\frac{bA^2}{4 }\right)|\nabla u|_2^2-\frac{1}{2^*}|u|_{2^*}^{2^*}-\frac{\mu}{q}|u|_{q}^{q}$, where $A:=\lim_{n\rightarrow\infty}|\nabla u_n|_2>0$.
		\item [$(ii)$] or $u_n\rightarrow u\neq0$ strongly in $H^1_{rad}(\mathbb R^N)$ for some $u\in H^1_{rad}(\mathbb R^N)$. Moreover, $u\in S_c$, $I(u)=m$ and $u$ solves equation \eqref{p1.1}.
	\end{itemize}
\end{proposition}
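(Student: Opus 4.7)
The plan is to analyze the Palais-Smale sequence by a Brezis-Lieb splitting, identifying a profile $u$ that solves a frozen-coefficient limit equation and a residual $v_n=u_n-u$ whose contribution to the energy is at least $c_{N,+}$.

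First I would establish boundedness of $\{u_n\}$ in $H^1(\mathbb R^N)$. From $P(u_n)\to 0$,
$$a|\nabla u_n|_2^2+b|\nabla u_n|_2^4=\mu\delta_q|u_n|_q^q+|u_n|_{2^*}^{2^*}+o(1),$$
and the Gagliardo-Nirenberg and Sobolev inequalities bound the right-hand side by a constant multiple of $|\nabla u_n|_2^{q\delta_q}+|\nabla u_n|_2^{2^*}$. Since $N\geq 5$ yields $2^*<2+8/N$, one has $q\delta_q<4$ and $2^*<4$, so the $b|\nabla u_n|_2^4$ term dominates if $|\nabla u_n|_2\to\infty$, a contradiction. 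Passing to a subsequence, $u_n\rightharpoonup u$ in $H^1_{rad}(\mathbb R^N)$, $u_n\to u$ in $L^q$ by Strauss radial compactness, and $|\nabla u_n|_2^2\to A^2$. A standard Lagrange-multiplier argument produces $\lambda_n\in\mathbb R$ with $I'(u_n)+\lambda_n u_n\to 0$ in $H^{-1}$; combining $\langle I'(u_n),u_n\rangle$ with $P(u_n)\to 0$ gives $\lambda_n c=\mu(1-\delta_q)|u_n|_q^q+o(1)\to\mu(1-\delta_q)|u|_q^q$. Taking the weak limit of the equation, the nonlocal coefficient passes to $a+bA^2$ and $u$ solves
$$-(a+bA^2)\Delta u+\lambda u=\mu|u|^{q-2}u+|u|^{2^*-2}u.$$

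Next I would rule out $u\equiv 0$. If $u=0$, then $|u_n|_q^q\to 0$; combining $P(u_n)\to 0$ with Sobolev forces $a+bA^2\leq S^{-2^*/2}A^{2^*-2}$, i.e., $A^2\in[\xi_-^2,\xi_+^2]$ via the function $f$ in \eqref{ft}. A direct computation then gives $m=\mathcal I_0(A^2)$, and since $\mathcal I_0$ peaks at $\eta^2\in(\xi_-^2,\xi_+^2)$ with $c_{N,-}>c_{N,+}$, one has $\mathcal I_0\geq c_{N,+}$ on $[\xi_-^2,\xi_+^2]$, contradicting $m<c_{N,+}$. Hence $u\neq 0$ and $\lambda=\mu(1-\delta_q)|u|_q^q/c>0$. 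Writing $u_n=u+v_n$ with $v_n\rightharpoonup 0$ and $|\nabla v_n|_2^2\to D^2:=A^2-|\nabla u|_2^2$, Brezis-Lieb applied to $|u_n|_{2^*}^{2^*}$, together with the Pohozaev identity $(a+bA^2)|\nabla u|_2^2=\mu\delta_q|u|_q^q+|u|_{2^*}^{2^*}$ for $u$ and the strong $L^q$-convergence of $u_n$, reduces $P(u_n)\to 0$ to the key identity
$$(a+bA^2)D^2=\lim_{n\to\infty}|v_n|_{2^*}^{2^*}.$$
Sobolev on $v_n$ yields the dichotomy: either $D=0$, in which case $v_n\to 0$ in $L^{2^*}$; combining $\lambda>0$ with the tested identities and $\lim|v_n|_2^2=c-|u|_2^2$ then forces $|u|_2^2=c$, so $v_n\to 0$ in $H^1$ and case (ii) holds. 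Or $D>0$ with $D^2\geq(a+bA^2)^{(N-2)/2}S^{N/2}$; combined with $A^2\geq D^2$, the function $\phi(s):=s-(a+bs)^{(N-2)/2}S^{N/2}$ vanishes exactly at $\xi_\pm^2$ and is positive between them, so $A^2\in[\xi_-^2,\xi_+^2]$.

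The main obstacle, and the most delicate step, is the residue estimate $m-K(u)\geq c_{N,+}$ in case (i). Expanding $I(u_n)$ via Brezis-Lieb and using $(a+bA^2)D^2=\lim|v_n|_{2^*}^{2^*}$, a direct computation yields
$$m-K(u)=\frac{D^2}{4N}\bigl(4a-b(N-4)A^2\bigr).$$
I would split by the sign of $4a-b(N-4)A^2$. When nonnegative, substituting $D^2\geq(a+bA^2)^{(N-2)/2}S^{N/2}$ gives $m-K(u)\geq h(A^2)$ with $h(s):=\frac{(a+bs)^{(N-2)/2}S^{N/2}}{4N}(4a-b(N-4)s)$; explicit computation of $h'$ shows $h$ is unimodal with peak at $\eta^2$, and using the defining equation of $\xi_\pm$ one verifies $h(\xi_\pm^2)=c_{N,\pm}$, so on $[\xi_-^2,\xi_+^2]$ one has $h\geq\min(c_{N,-},c_{N,+})=c_{N,+}$. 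When the sign is negative, $D^2\leq A^2$ together with the sign flip yield $m-K(u)\geq\mathcal I_0(A^2)$; the monotonicity of $\mathcal I_0$ on $[\eta^2,\infty)$ combined with $A^2\leq\xi_+^2$ then gives $\mathcal I_0(A^2)\geq c_{N,+}$. The care required is in simultaneously tracking the Brezis-Lieb decomposition for the Sobolev-critical nonlinearity and the limit of the nonlocal quartic term, which behaves as a scalar $bA^2$ in the limit rather than admitting a local Brezis-Lieb splitting.
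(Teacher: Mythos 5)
Your proposal is correct and follows the same overall skeleton as the paper's proof (boundedness, Lagrange multipliers, excluding $u\equiv 0$ via $m<c_{N,+}$, Brezis--Lieb splitting, the identity $(a+bA^2)\lim|\nabla v_n|_2^2=\lim|v_n|_{2^*}^{2^*}$, and the dichotomy on the residual gradient), but the decisive residue estimate is carried out differently. The paper substitutes the key identity once to rewrite $\bigl(\tfrac{a}{2}+\tfrac{bA^2}{4}\bigr)|\nabla v_n|_2^2-\tfrac{1}{2^*}|v_n|_{2^*}^{2^*}$ as $\tfrac{a}{4}|\nabla v_n|_2^2-\bigl(\tfrac{1}{2^*}-\tfrac14\bigr)|v_n|_{2^*}^{2^*}+o(1)$, bounds this below by the single-variable function $k(t)=\tfrac{a}{4}t^2-\bigl(\tfrac{1}{2^*}-\tfrac14\bigr)S^{-2^*/2}t^{2^*}$ evaluated at $l=\lim|\nabla v_n|_2\in[\xi_-,\xi_+]$, and checks $\min_{[\xi_-,\xi_+]}k=k(\xi_+)=c_{N,+}$; this removes $A$ from the estimate entirely and avoids any case split. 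You instead keep both $D$ and $A$, derive the exact identity $m-K(u)=\tfrac{D^2}{4N}\bigl(4a-b(N-4)A^2\bigr)$, and split on the sign of the second factor, using $D^2\geq(a+bA^2)^{(N-2)/2}S^{N/2}$ in one case and $D^2\leq A^2$ in the other; I checked the algebra ($h(\xi_\pm^2)=c_{N,\pm}$, unimodality of $h$ at $\eta^2$, and the monotonicity of $\mathcal I_0$ on $[\eta^2,\infty)$) and both branches close. Your route is slightly longer but yields the sharper exact formula for $m-K(u)$; the paper's is more economical. Two small points to tighten: (1) when excluding $u\equiv0$, your inequality $a+bA^2\leq S^{-2^*/2}A^{2^*-2}$ presupposes $A>0$; the subcase $A=0$ must be killed separately by noting it forces $m=\lim\bigl(I(u_n)-\tfrac{1}{2^*}P(u_n)\bigr)=0$, contradicting the hypothesis $m\neq0$ (this is exactly where that hypothesis is used); (2) in the strong-convergence branch, when you test the two equations with $u_n-u$ and subtract, the $\lambda$-term appears with a positive sign, $(a+bA^2)|\nabla(u_n-u)|_2^2+\lambda|u_n-u|_2^2\to0$, and it is precisely $\lambda>0$ that lets you conclude $L^2$-convergence -- worth stating explicitly since the corresponding display in the paper has a sign slip.
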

\begin{proof}
	It follows from $I(u_n)\rightarrow m$ that
	\begin{equation*}
		\begin{split}
			\frac{a}{2}|\nabla u_n|^2_2&+\frac{b}{4}|\nabla u_n|^4_2\leq \frac{1}{2^*}|u_n|_{2^*}^{2^*}+\frac{\mu}{q}|u_n|^q_q+m+1\\
			&\leq \frac{1}{2^*}S^{-\frac{2^*}{2}}|\nabla u_n|_{2}^{2^*}+\frac{\mu}{q}C_q^q|\nabla u_n|^{q\delta_q}_2c^\frac{q(1-\delta_q)}{2}+m+1,
		\end{split}
	\end{equation*}
	combining with $q\delta_q<2^*<4$, which implies that $|\nabla u_n|_2\leq C$ for some $C$. Then $\{u_n\}$ is bounded in $H_{rad}^1(\mathbb R^N)$ and  for some $u\in H_{rad}^1(\mathbb R^N)$,
	$$u_n\rightharpoonup u\ {\rm in} \ H_{rad}^1(\mathbb R^N), \ \ u_n\rightarrow u\ {\rm in} \ L^q(\mathbb R^N), \ \ u_n(x)\rightarrow u(x) \  {\rm a.e. \ on}\ \mathbb R^N.$$
Since $\{u_n\}$ is a Palais-Smale sequence of $I|_{S_c}$, by the Lagrange multipliers rule, there exists $\lambda_n\in\mathbb R$ such that for any $\phi\in H_{rad}^1(\mathbb R^N)$ it holds
	\begin{equation}\label{pro1proof3.1}
	\begin{split}
	(a+b|\nabla u_n|^2_2)\int_{\mathbb R^N}\nabla u_n\nabla \phi dx-&\int_{\mathbb R^N}|u_n|^{2^*-2}u_n \phi dx-\mu\int_{\mathbb R^N}|u_n|^{q-2}u_n\phi dx\\
	&+\lambda_n \int_{\mathbb R^N}u_n\phi dx=o(1)||\phi||.
\end{split}
\end{equation}
In particular, taking $\phi=u_n$ in \eqref{pro1proof3.1}, then,
\begin{equation*}
-\lambda_nc=(a+b|\nabla u_n|^2_2)|\nabla u_n|_2^2-|u_n|_{2^*}^{2^*}-\mu|u_n|_q^{q}+o(1).
\end{equation*}
The boundedness of $\{u_n\}$ in $H_{rad}^1(\mathbb R^N)\cap L^q(\mathbb R^N)\cap L^{2^*}(\mathbb R^N)$ implies that $\lambda_n\rightarrow \lambda$ up to subsequence.
Combining with $ P(u_n)=o(1)$, we have
\begin{equation*}
\lambda_nc=\mu (1-\delta_q)|u_n|_q^{q}+o(1).
\end{equation*}
Let $n\rightarrow\infty$, then $\lambda c=\mu (1-\delta_q)|u|_q^{q}$. It follows from $\mu>0$ and $0<\delta_q<1$ that $\lambda\geq0$, and $\lambda=0$ if and only if $u=0$. If $\lambda=0$ and $u=0$, setting that $\lim_{n\rightarrow\infty}|\nabla u_n|_2=A\geq0$, we have
\begin{equation*}
aA^2+bA^4=\lim_{n\rightarrow\infty}(a|\nabla u_n|_2^2+b|\nabla u_n|^4_2)=\lim_{n\rightarrow\infty}|u_n|_{2^*}^{2^*}\leq S^{-\frac{2^*}{2}}\lim_{n\rightarrow\infty}|\nabla u_n|^{2^*}_2=S^{-\frac{2^*}{2}}A^{2^*},
\end{equation*}
which implies that $A=0$  or $\xi_-\leq A\leq \xi_+$, where $\xi_\pm$ are the roots of $f(t)=0$ by \eqref{ft}. If $\xi_-\leq A\leq \xi_+$, it follows that
\begin{equation*}
\begin{split}
	c_{N,+}> m=&\lim_{n\rightarrow\infty}I(u_n)=\lim_{n\rightarrow\infty}(I(u_n)-\frac{1}{2^*}P(u_n))\\
	&=\frac{1}{N}aA^2-\frac{N-4}{4N}bA^4\geq \frac{1}{N}a\xi_+^2-\frac{N-4}{4N}b\xi_+^4=c_{N,+},
\end{split}
\end{equation*}
which is impossible. The case $A=0$ is also impossible for $m\neq0$. So $\lambda>0$  and $u\neq0$ hold. Since $u_n\rightharpoonup u\neq0$ in $H^1_{rad}(\mathbb R^N)$, we set $\lim_{n\rightarrow\infty}|\nabla u_n|_2=A>0$. Then, by \eqref{pro1proof3.1}, we obtain
\begin{equation}\label{pro1proof3.6}
\begin{split}
	(a+bA^2)\int_{\mathbb R^N}\nabla u\nabla \phi dx-&\int_{\mathbb R^N}|u|^{2^*-2}u \phi dx-\mu\int_{\mathbb R^N}|u|^{q-2}u\phi dx+\lambda \int_{\mathbb R^N}u\phi dx=0,
\end{split}
\end{equation}
for any $\phi\in H^1_{rad}(\mathbb R^N)$, which means $u$ is a weak solution of $$-(a+bA^2) \Delta u+\lambda u=\mu |u|^{q-2}u+|u|^{2^*-2}u\ \ {\rm in}\ \ \mathbb{R}^N.$$ It follows from the Pohozaev identity that
\begin{equation}\label{pro1proof3.7}
Q_A(u):=(a+bA^2)|\nabla u|^2_2-\mu\delta_q|u|_q^q- |u|_{2^*}^{2^*}=0.
\end{equation}
Setting that $v_n:=u_n-u$, then $v_n\rightharpoonup0$ in $H_{rad}^1(\mathbb R^N)$ and $v_n\rightarrow0$ in $L^q(\mathbb R^N)$ for $2<q<2^*$. By the Br\'{e}zis-Lieb lemma,  we have $|\nabla u_n|_2^2=|\nabla v_n|_2^2+|\nabla u|_2^2+o(1)$ and
$$|u_n|_{q}^{q}=|v_n|_{q}^{q}+|u|_{q}^{q}+o(1)=|u|_{q}^{q}+o(1),\ \ |u_n|_{2^*}^{2^*}=|v_n|_{2^*}^{2^*}+|u|_{2^*}^{2^*}+o(1).$$
We can rewrite $P(u_n)=o(1)$ as
\begin{equation}\label{pro1proof3.8}
P(u_n)=(a+bA^2)|\nabla u_n|^2_2-\mu\delta_q|u|_q^q- |u_n|_{2^*}^{2^*}+o(1)=o(1).
\end{equation}
Setting that $\lim_{n\rightarrow\infty}|\nabla v_n|_2=l$, then it follows from \eqref{pro1proof3.7} and \eqref{pro1proof3.8} that
\begin{equation*}\label{pro1proof3.9}
\begin{split}
	al^2+bl^4&:=\lim_{n\rightarrow\infty}(a|\nabla v_n|_2^2+b|\nabla v_n|^4_2)\leq\lim_{n\rightarrow\infty}(a+bA^2)|\nabla v_n|^2_2\\
	&=\lim_{n\rightarrow\infty}|v_n|_{2^*}^{2^*}\leq S^{-\frac{2^*}{2}}\lim_{n\rightarrow\infty}|\nabla v_n|^{2^*}_2=S^{-\frac{2^*}{2}}l^{2^*},
\end{split}
\end{equation*}
which implies that $\xi_-\leq l\leq \xi_+$ or $l=0$.

$(i)$ If $\xi_-\leq l\leq \xi_+$, we define the function
\begin{equation*}\label{kt}
	k(t):=\frac{1}{4}at^2-(\frac{1}{2^*}
	-\frac{1}{4})S^{-\frac{2^*}{2}}t^{2^*}\ \  \ {\rm for}\ \ t\in(0,\infty).
\end{equation*}
With some careful analysis and some basic computations, we can obtain that
\begin{equation*}\label{c}
	\min_{t\in[\xi_-,\xi_+]}k(t)=k(\xi_+)=c_{N,+}.
\end{equation*}
Using the above analysis, we have
\begin{equation*}\label{pro1proof2.10}
\begin{split}
	m=\lim_{n\rightarrow\infty}I(u_n)&=
	K(u)+\lim_{n\rightarrow\infty}\left\{(\frac{a}{2}+\frac{bA^2}{4 })|\nabla v_n|_2^2-\frac{1}{2^*}|v_n|_{2^*}^{2^*}\right\}\\
	&=K(u)+\lim_{n\rightarrow\infty}\left\{\frac{1}{4}a|\nabla v_n|_2^2-(\frac{1}{2^*}-\frac{1}{4})|v_n|_{2^*}^{2^*}\right\}\\
	&\geq K(u)+\frac{1}{4}al^2-(\frac{1}{2^*}-\frac{1}{4})S^{-\frac{2^*}{2}}l^{2^*}\\
	&\geq K(u)+\min_{t\in[\xi_-,\xi_+]}k(t)\\
	&= K(u)+c_{N,+},
\end{split}
\end{equation*}
where $ K(u):=\left(\frac{a}{2}+\frac{bA^2}{4 }\right)|\nabla u|_2^2-\frac{1}{2^*}|u|_{2^*}^{2^*}-\frac{\mu}{q}|u|_{q}^{q}$. In this case, $(i)$ holds.

$(ii)$ If $l=0$, we know that $u_n\rightarrow u$ in $D^{1,2}(\mathbb R^N)$ and $L^{2^*}(\mathbb{R}^N)$. Testing \eqref{pro1proof3.1} and \eqref{pro1proof3.6} with $\phi=u_n-u$ and subtracting them, one obtains
$$(a+bA^2)|\nabla(u_n-u)|_2^2-\lambda|u_n-u|_2^2\to 0\ \ {\rm as}\ \ n\to\infty,$$ which implies that $u_n\to u$ in $L^2(\mathbb R^N)$, then $u_n\to u$ in $H_{rad}^1(\mathbb R^N)$. In this cases, $(ii)$ holds.
\end{proof}

\begin{lemma}\label{lemma3.2}
	There exists $\mu_1>0$ such that $\mu c^\frac{q(1-\delta_q)}{2}<\mu_1$. If the case (i)  in Proposition \ref{pro1} holds and $b_1<b<b_0$, then $K(u)\geq -c_{N,+}$.
\end{lemma}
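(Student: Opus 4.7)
The plan is to reduce the claim to a one-dimensional estimate on a perturbed version of the pure-critical function $h$ from Theorem \ref{th1.1}, and to quantify the smallness required of $\mu c^{q(1-\delta_q)/2}$ so that the subcritical Gagliardo--Nirenberg perturbation cannot drag the infimum of the resulting scalar function below $-c_{N,+}$.

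First, since $u_n\rightharpoonup u$ in $H^1_{rad}(\mathbb R^N)$, weak lower semicontinuity gives $|\nabla u|_2\le A$, so that
$$K(u)-I(u)=\tfrac{b}{4}|\nabla u|_2^2\bigl(A^2-|\nabla u|_2^2\bigr)\ge 0,$$
which reduces the claim to $I(u)\ge -c_{N,+}$. Because $|u_n|_2^2=c$ and $u_n\rightharpoonup u$, one has $|u|_2^2\le c$; inserting \eqref{G-N} and \eqref{S} into the definition of $I$ gives
$$I(u)\ge h(|\nabla u|_2)-\beta\,|\nabla u|_2^{q\delta_q},\qquad \beta:=\frac{\mu C_q^q}{q}c^{q(1-\delta_q)/2},$$
where $h(t)=\tfrac{a}{2}t^2+\tfrac{b}{4}t^4-\tfrac{1}{2^*}S^{-2^*/2}t^{2^*}$ is the function whose structure was pinned down in Theorem \ref{th1.1}.

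The hypothesis $b_1<b<b_0$ is then used crucially. From Theorem \ref{th1.1}(i) the function $h$ is increasing on $(0,\xi_-)$, decreasing on $(\xi_-,\xi_+)$ and increasing on $(\xi_+,\infty)$, with local minima $h(0)=0$ and $h(\xi_+)=c_{N,+}>0$; together with $N\ge 5$ forcing $2^*<4$ (so $\tfrac{b}{4}t^4$ dominates at infinity) this yields $h(t)\ge 0$ for every $t\ge 0$ and, more quantitatively, a constant $T_1=T_1(a,b,N,S)>0$ with $h(t)\ge \tfrac{b}{8}t^4$ for all $t\ge T_1$. The hypothesis $2<q<2+\tfrac{4}{N}$ is equivalent to $0<q\delta_q<2$, so $g_\beta(t):=h(t)-\beta t^{q\delta_q}$ satisfies $g_\beta(0)=0$, is negative for small $t>0$, and tends to $+\infty$ as $t\to\infty$; hence $g_\beta$ attains a (negative) global minimum at some $t_\beta>0$.

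Whenever $\beta$ is small enough that $(8\beta/b)^{1/(4-q\delta_q)}\le T_1$, the bound $g_\beta(t)\ge \tfrac{b}{16}t^4>0$ for $t\ge T_1$ confines every minimizer to $[0,T_1]$, and therefore $\min_{t\ge 0} g_\beta \ge -\beta T_1^{q\delta_q}$ since $h\ge 0$. Choosing
$$\mu_1:=\frac{q}{C_q^q}\min\Bigl\{\tfrac{bT_1^{4-q\delta_q}}{8},\;c_{N,+}T_1^{-q\delta_q}\Bigr\},$$
which is positive precisely because $c_{N,+}>0$, ensures $\min g_\beta\ge -c_{N,+}$ whenever $\mu c^{q(1-\delta_q)/2}<\mu_1$, and consequently $K(u)\ge I(u)\ge g_\beta(|\nabla u|_2)\ge -c_{N,+}$. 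The main obstacle is precisely that no deeper secondary well of $g_\beta$ may form inside $[0,T_1]$: this is controlled by the uniform perturbation bound $|g_\beta(t)-h(t)|\le \beta T_1^{q\delta_q}$ combined with the strict positivity $h(\xi_+)=c_{N,+}>0$ that is guaranteed by $b>b_1$.
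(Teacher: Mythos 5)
Your proof is correct, but it takes a genuinely different route from the paper's. The paper exploits the fact that in case $(i)$ the weak limit $u$ solves the limit equation with coefficient $a+bA^2$, hence satisfies the Pohozaev-type identity $Q_A(u)=0$ from \eqref{pro1proof3.7}; this identity is used twice, first to localize $|\nabla u|_2$ in $(0,\xi_0^\mu)\cup(\xi_-^\mu,\xi_+^\mu)$ via the perturbed function $f_1$ in \eqref{f1}, and second to rewrite $K(u)=K(u)-\tfrac14 Q_A(u)$ as a combination with favorable signs, reducing the claim to $\min_{0<t<\xi_+^\mu}\bigl\{\tfrac{a}{4}t^2-(\tfrac{1}{2^*}-\tfrac14)S^{-2^*/2}t^{2^*}\bigr\}\ge 0$ plus a perturbation that vanishes as $\mu c^{q(1-\delta_q)/2}\to 0^+$. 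You bypass the limit equation entirely: the observation $K(u)-I(u)=\tfrac{b}{4}|\nabla u|_2^2(A^2-|\nabla u|_2^2)\ge 0$ (valid since $b>0$ and $|\nabla u|_2\le A$ by weak lower semicontinuity) reduces everything to the global scalar bound $\inf_{t\ge0}\bigl(h(t)-\beta t^{q\delta_q}\bigr)\ge -c_{N,+}$, which you get from $h\ge 0$ on $[0,\infty)$ --- itself a consequence of $h(\xi_+)=c_{N,+}>0$, i.e.\ of $b>b_1$, exactly as in \eqref{h_xi_+}. Your version is shorter, makes $\mu_1$ explicit, and avoids the unproved assertion in the paper that $\min_{0<t<\xi_+^\mu}k(t)\ge 0$; the paper's version retains the sharper information on where $|\nabla u|_2$ can lie, which is of independent use. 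Two cosmetic remarks: the confinement step should read $(16\beta/b)^{1/(4-q\delta_q)}\le T_1$ if you insist on the conclusion $g_\beta(t)\ge\tfrac{b}{16}t^4$ (with your constant $8$ you only get $g_\beta\ge 0$ on $[T_1,\infty)$, which is in fact all you need); and, like the paper, you are implicitly using $2<q<2+\tfrac{4}{N}$ (so that $q\delta_q<2$), which is not written in the lemma's statement but is inherited from the hypotheses of Theorem \ref{th2}.
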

\begin{proof}
        For any $t>0$, we define
	\begin{equation}\label{f1}
		f_1(t):=f(t)t^2-\mu
		c^\frac{q(1-\delta_q)}{2}C_q^q\delta_qt^{q\delta_q}=bt^4-S^{-\frac{2^*}{2}}t^{2^*}+at^2-\mu
		c^\frac{q(1-\delta_q)}{2}C_q^q\delta_qt^{q\delta_q}.
	\end{equation}
	Since $N\geq5$ and $2<q<2+\frac{4}{N}$, we have $0<q\delta_q<2$ and $2^*<4$, let $\xi_0^\mu$, $\xi_\pm^\mu$ be the positive roots of $f_1(t)=0$ and satisfy that
	\begin{equation*}
		0<\xi_0^\mu<\xi_-^\mu<\xi_+^\mu.
	\end{equation*}
	Recalling that $\xi_\pm$ are the roots of $f(t)=0$ in \eqref{ft}. It is evident to check that
	$$\xi_0^\mu<\xi_-^\mu<\xi_-<\xi_+<\xi_+^\mu.$$
	Moreover,
	\begin{equation}\label{pro2proof3.12}
		\xi_0^\mu\rightarrow0^+\ \ {\rm and}\ \ \xi_\pm^\mu\rightarrow\xi_\pm \ \ {\rm as}\ \ \mu c^\frac{q(1-\delta_q)}{2}\rightarrow0^+.
	\end{equation}
It follows from \eqref{pro1proof3.7} that
	\begin{equation*}
		\begin{split}
			a|\nabla u|^2_2+b|\nabla u|^4_2&< (a+bA^2)|\nabla u|^2_2= |u|_{2^*}^{2^*}+\mu\delta_q|u|_q^q\\
			&\leq S^{-\frac{2^*}{2}}|\nabla u|_{2}^{2^*}+\mu c^\frac{q(1-\delta_q)}{2}
			C_q^q\delta_q|\nabla u|^{q\delta_q}_2,
		\end{split}
	\end{equation*}
	which implies $f_1(|\nabla u|_2)<0$.
	By the above setting, we know that
	\begin{equation}\label{pro2proof3.13}
		0<|\nabla u|_2<\xi_0^\mu\ {\rm or }\  \xi_-^\mu<|\nabla u|_2<\xi_+^\mu.
	\end{equation}
	Then $b_1<b<b_0$ implies $c_{N,+}>0$. By \eqref{pro2proof3.12} and \eqref{pro2proof3.13}, one obtains
	\begin{equation}\label{eqlem3.2}
		\begin{split}
			K(u)&=\left(\frac{a}{2}+\frac{bA^2}{4 }\right)|\nabla u|_2^2-\frac{1}{2^*}|u|_{2^*}^{2^*}-\frac{\mu}{q}|u|_{q}^{q}-\frac{1}{4}Q_A(u)\\
			&=\frac{a}{4}|\nabla u|_2^2-(\frac{1}{2^*}-\frac{1}{4})|u|_{2^*}^{2^*}
			-(\frac{1}{q\delta_q}-\frac{1}{4})\delta_q\mu|u|_q^q\\
			&\geq\frac{a}{4}|\nabla u|_2^2-(\frac{1}{2^*}-\frac{1}{4})S^{-\frac{2^*}{2}}|\nabla u|_{2}^{2^*}
			-(\frac{1}{q\delta_q}-\frac{1}{4})\mu\delta_qC_q^qc^\frac{q(1-\delta_q)}{2}|\nabla u|_2^{q\delta_q}\\
			&\geq \min_{0<t<\xi_+^\mu}\left\{\frac{a}{4}t^2-(\frac{1}{2^*}
			-\frac{1}{4})S^{-\frac{2^*}{2}}t^{2^*}\right\}-(\frac{1}{q\delta_q}-\frac{1}{4})\mu\delta_qC_q^qc^\frac{q(1-\delta_q)}{2}
			(\xi_+^\mu)^{q\delta_q}\\
			&\geq0 -c_{N,+}=-c_{N,+}
		\end{split}
	\end{equation}
	for some $\mu c^\frac{q(1-\delta_q)}{2}<\mu_1$ small enough.
\end{proof}

Consider the constrained functional $I|_{S_c}$, by \eqref{S} and \eqref{G-N}, we have
\begin{equation*}
	\begin{aligned}
		I(u)&=\frac{a}{2}|\nabla u|_2^2+\frac{b}{4}|\nabla u|_2^4-\frac{\mu}{q}|u|_q^q-\frac{1}{2^*}|u|_{2^*}^{2^*}\\&\ge \frac{a}{2}|\nabla u|_2^2+\frac{b}{4}|\nabla u|_2^4-\frac{\mu}{q}C_q^qc^\frac{q(1-\delta_q)}{2}|\nabla u|_2^{q\delta_q}-\frac{S^{-\frac{2^*}{2}}}{2^*}|\nabla u|_2^{2^*}:=h_1(|\nabla u|_2).
	\end{aligned}
\end{equation*}
Hence $f_1(t)=th_1'(t)$, where $f_1(t)$ is defined by \eqref{f1}. Obviously, $\xi_-^\mu$ is the maximum point of $h_1(t)$, and $\xi_0^\mu$, $\xi_+^\mu$ are the minimum points of $h_1(t)$. Set that
\begin{equation*}
	m(b,\mu)=m(c,b,\mu):=\inf_{u\in S_c}I(u).
\end{equation*}
\begin{lemma}\label{lemma3.3}
	There exists $\mu_2>0$ such that  $\mu c^\frac{q(1-\delta_q)}{2}<\mu_2$. If $b_1<b<b_0
    $, there holds
    \begin{itemize}
        \item [$(i)$] $h_1(\xi_+^\mu)>0$,
        \item[$(ii)$] $h_1(t)$ has a unique zero point $\xi_0^{\mu,1}$ and $h_1(t)\geq0$ for $t\geq\xi_0^{\mu,1}$.
    \end{itemize}
\end{lemma}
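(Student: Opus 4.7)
The plan is to establish (i) by a direct perturbation argument based on the convergence $\xi_+^\mu \to \xi_+$ from \eqref{pro2proof3.12}, and then to deduce (ii) via a complete sign analysis of $h_1$ driven by the identity $h_1'(t) = f_1(t)/t$ for $t > 0$.

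For part (i), I would write $h_1(t) = h(t) - \frac{\mu}{q}C_q^q c^{\frac{q(1-\delta_q)}{2}} t^{q\delta_q}$, where $h$ is defined in \eqref{h}. Using the relation $f(\xi_+) = 0$ to eliminate $S^{-\frac{2^*}{2}}\xi_+^{2^*}$ gives the closed-form expression $h(\xi_+) = \frac{1}{N}a\xi_+^2 - \frac{N-4}{4N}b\xi_+^4 = c_{N,+}$, which is strictly positive under the hypothesis $b_1 < b < b_0$ by Theorem \ref{th1.1}(i). Since $\xi_+^\mu \to \xi_+$ as $\mu c^{\frac{q(1-\delta_q)}{2}} \to 0^+$, continuity yields $h(\xi_+^\mu) \to c_{N,+}$, while the boundedness of $\xi_+^\mu$ forces the perturbation term $\frac{\mu}{q}C_q^q c^{\frac{q(1-\delta_q)}{2}}(\xi_+^\mu)^{q\delta_q}$ to vanish in the same limit. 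Hence $h_1(\xi_+^\mu) \to c_{N,+} > 0$, and I would choose $\mu_2 > 0$ small enough that $h_1(\xi_+^\mu) > 0$ throughout the regime $\mu c^{\frac{q(1-\delta_q)}{2}} < \mu_2$.

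For part (ii), the key observation is $h_1'(t) = f_1(t)/t$ on $(0,\infty)$. Since $q\delta_q < 2 < 2^* < 4$ (using $q < 2 + \frac{4}{N}$ and $N \geq 5$), one has $f_1(t) < 0$ as $t \to 0^+$ and $f_1(t) \to +\infty$ as $t \to \infty$, so the three positive roots $\xi_0^\mu < \xi_-^\mu < \xi_+^\mu$ partition $(0,\infty)$ into four intervals where $f_1$ has signs $-,+,-,+$. Consequently $h_1$ has local minima at $\xi_0^\mu$ and $\xi_+^\mu$ and a local maximum at $\xi_-^\mu$. Combining $h_1(0) = 0$ with strict monotonicity on $(0,\xi_0^\mu)$ forces $h_1(\xi_0^\mu) < 0$; part (i) yields $h_1(\xi_+^\mu) > 0$; and strict decrease on $(\xi_-^\mu, \xi_+^\mu)$ gives $h_1(\xi_-^\mu) > h_1(\xi_+^\mu) > 0$.

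I would then invoke the intermediate value theorem on $(\xi_0^\mu, \xi_-^\mu)$: since $h_1$ is strictly increasing from a negative value to a positive one there, it has exactly one zero $\xi_0^{\mu,1}$ in that interval. Outside this interval, $h_1 < 0$ on $(0,\xi_0^\mu]$ and $h_1 > 0$ on $(\xi_-^\mu, \infty)$ (by strict decrease from the positive maximum $h_1(\xi_-^\mu)$ only down to the strictly positive value $h_1(\xi_+^\mu)$, followed by the final ascent to $+\infty$). Therefore $\xi_0^{\mu,1}$ is the unique positive zero of $h_1$ and $h_1(t) \geq 0$ for all $t \geq \xi_0^{\mu,1}$. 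The main obstacle is part (i): the smallness of $\mu c^{\frac{q(1-\delta_q)}{2}}$ is required only for that single positivity check, which itself depends crucially on the lower bound $b > b_1$ ensuring $c_{N,+} > 0$; everything after that is a routine structural argument on the shape of a quartic-type function.
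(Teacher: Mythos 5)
Your proposal is correct and follows essentially the same route as the paper: part (i) reduces to $h(\xi_+)=c_{N,+}>0$ plus smallness of the perturbation term, and part (ii) is the sign analysis of $h_1'(t)=f_1(t)/t$ using $h_1(0^+)=0^-$, $h_1(+\infty)=+\infty$ and the positivity from (i). The only cosmetic difference is that in (i) the paper bounds $h(\xi_+^\mu)>h(\xi_+)$ directly via the monotonicity of $h$ on $(\xi_+,\infty)$ rather than passing to the limit $\xi_+^\mu\to\xi_+$; both arguments are valid and yield the same conclusion.
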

\begin{proof}
    $(i)$ Since $h'(t)=tf(t)$, we know that $\xi_+$ is the minimum point of $h(t)$ and $h(t)>h(\xi_+)$ for $t>\xi_+$. Then, combining with $c_{N,+}>0$ for $b_1<b<b_0$, $\xi_+^\mu>\xi_+$ and \eqref{h_xi_+}, one obtains
    \begin{equation}\label{eqlem3.3}
    \begin{split}
         h_1(\xi_+^\mu)&=\frac{a}{2}(\xi_+^\mu)^2+\frac{b}{4}(\xi_+^\mu)^4-\frac{\mu}{q}C_q^qc^\frac{q(1-\delta_q)}{2}(\xi_+^\mu)^{q\delta_q}-\frac{S^{-\frac{2^*}{2}}}{2^*}(\xi_+^\mu)^{2^*}\\
         &=h(\xi_+^\mu)-\frac{\mu}{q}C_q^qc^\frac{q(1-\delta_q)}{2}(\xi_+^\mu)^{q\delta_q}>h(\xi_+)-\frac{\mu}{q}C_q^qc^\frac{q(1-\delta_q)}{2}(\xi_+^\mu)^{q\delta_q}\\
         &=c_{N,+}-\frac{\mu}{q}C_q^qc^\frac{q(1-\delta_q)}{2}(\xi_+^\mu)^{q\delta_q}>0
    \end{split}
    \end{equation}
    for some $\mu c^\frac{q(1-\delta_q)}{2}<\mu_2$ small enough. By \eqref{eqlem3.2}, we also know that $\mu_2<\mu_1$.

    $(ii)$ By the $h_1(0^+)=0^-$, $h_1(+\infty)=+\infty$ and \eqref{eqlem3.3}, it is easy to check that $h_1(t)$ has a unique zero point $\xi_0^{\mu,1}$ and $h_1(t)\geq0$ for $t\geq\xi_0^{\mu,1}$.

\end{proof}
\begin{lemma}\label{lemma3.4}For $\mu c^\frac{q(1-\delta_q)}{2}<\mu_2$ and $b_1<b<b_0$, there holds $$\inf_{u\in A_{\xi_0^{\mu,1}}}I(u)=m(b,\mu)<0,$$ where $A_{\xi_0^{\mu,1}}:=\{u\in S_c: |\nabla u|_2<\xi_0^{\mu,1}\}$.
\end{lemma}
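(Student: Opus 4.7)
The plan is to split $S_c$ into $A_{\xi_0^{\mu,1}}$ and its complement, use Lemma \ref{lemma3.3}(ii) to show $I \geq 0$ on the complement, and then exhibit a test function in $A_{\xi_0^{\mu,1}}$ with strictly negative energy.

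First, I would recall that by \eqref{S} and \eqref{G-N} every $u \in S_c$ satisfies $I(u) \geq h_1(|\nabla u|_2)$. Since Lemma \ref{lemma3.3}(ii) states that $\xi_0^{\mu,1}$ is the unique zero of $h_1$ and $h_1(t) \geq 0$ for all $t \geq \xi_0^{\mu,1}$, any $u \in S_c \setminus A_{\xi_0^{\mu,1}}$, i.e.\ with $|\nabla u|_2 \geq \xi_0^{\mu,1}$, automatically obeys $I(u) \geq 0$. This already gives the lower barrier $\inf_{S_c \setminus A_{\xi_0^{\mu,1}}} I \geq 0$, so if I can produce one element of $A_{\xi_0^{\mu,1}}$ with negative energy, the global infimum will necessarily be attained inside $A_{\xi_0^{\mu,1}}$.

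Next I would construct the negative-energy element by exploiting the fiber map. Pick any $v \in S_c$ and set $u_s := s \ast v$, so $u_s \in S_c$ and $|\nabla u_s|_2 = e^s |\nabla v|_2 \to 0$ as $s \to -\infty$; in particular $u_s \in A_{\xi_0^{\mu,1}}$ for $s$ sufficiently negative. Writing out
$$I(u_s) = \frac{ae^{2s}}{2}|\nabla v|_2^2 + \frac{be^{4s}}{4}|\nabla v|_2^4 - \frac{\mu e^{q\delta_q s}}{q}|v|_q^q - \frac{e^{2^*s}}{2^*}|v|_{2^*}^{2^*},$$
and using $2 < q < 2 + \tfrac{4}{N}$, which forces $q\delta_q < 2 < 2^* < 4$, I would factor out $e^{q\delta_q s}$. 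The bracket then tends to $-\tfrac{\mu}{q}|v|_q^q < 0$ as $s \to -\infty$, so there exists $\bar s$ sufficiently negative with $u_{\bar s} \in A_{\xi_0^{\mu,1}}$ and $I(u_{\bar s}) < 0$.

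Combining the two steps I obtain $\inf_{A_{\xi_0^{\mu,1}}} I \leq I(u_{\bar s}) < 0 \leq \inf_{S_c \setminus A_{\xi_0^{\mu,1}}} I$, hence $m(b,\mu) = \inf_{S_c} I = \inf_{A_{\xi_0^{\mu,1}}} I < 0$, which is the claimed equality. I do not expect a serious obstacle here: all the delicate geometric input (the positivity $h_1(\xi_+^\mu) > 0$ guaranteeing the convex-concave-convex profile of $h_1$ and the existence of the single threshold $\xi_0^{\mu,1}$) was already set up in Lemma \ref{lemma3.3} under the smallness assumption $\mu c^{q(1-\delta_q)/2} < \mu_2$, so the present lemma is essentially a scaling/threshold consequence of the mass-subcritical exponent $q\delta_q < 2$ dominating at small $|\nabla u|_2$.
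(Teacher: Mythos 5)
Your proposal is correct and follows essentially the same route as the paper: both use Lemma \ref{lemma3.3}(ii) to get $I(u)\geq h_1(|\nabla u|_2)\geq 0$ on $S_c\setminus A_{\xi_0^{\mu,1}}$, and both produce a negative-energy competitor via the fiber map by observing that $I(s\ast v)\to 0^-$ as $s\to-\infty$ (your factoring out of $e^{q\delta_q s}$ just makes the paper's assertion $\lim_{s\to-\infty}I(s\ast u)=0^-$ explicit). No gaps.
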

\begin{proof}
	For any $u\in S_c$, since $\lim\limits_{s\to -\infty}I(s\ast u)=0^-$ and $\lim\limits_{s\to +\infty}I(s\ast u)=+\infty$, we can choose a small $s_0\in\mathbb R$ such that $I(s_0*u)<0$. Therefore, $m(b,\mu)<0$. By Lemma \ref{lemma3.3} $(ii)$, for any $u\in A_{\xi_0^{\mu,1}}^c$, we have $I(u)\geq h_1(|\nabla u|_2)\geq0$, which implies $\inf_{u\in A_{\xi_0^{\mu,1}}}I(u)=m(b,\mu)<0$.

\end{proof}

\begin{proof}[The proof of Theorem \ref{th2}]
$(i)$ Let $\{v_n\}$ be a minimizing sequence for $ m(b,\mu)$. From Section 3.3 and Lemma 7.17 in \cite{lieb2001analysis}, we have $I(|v_n|^*)\leq I(|v_n|)$. So we can assume that $v_n\in S_c$ is nonnegative and radially decreasing for every $n$. Then we take $s_n\in \mathbb R$ such that $s_n*v_n\in \mathcal{P}$, and
\begin{equation*}
	I(s_n*v_n)=\min\{I(s*v_n):s\in \mathbb{R}\}\leq I(v_n).
\end{equation*}
Consequently, we obtain a new minimizing sequence $\{w_n=s_n*v_n\}$ for $m(b,\mu)$, with
\begin{equation*}
	w_n\in S_c\cap \mathcal{P} \ \ {\rm and} \ P(w_n)=0
\end{equation*}
for every $n$. Hence, the Ekeland's variational principle guarantees the existence of a new minimizing sequence $\{u_n\}\subset S_c$ for $m(b,\mu)$,  with the property that $|u_n-w_n|_{H^1}\to 0$ as $n\to +\infty$, which is also a Palais-Smale sequence for $I$ on $S_c$ with $P(u_n)\to 0$ as $n\to\infty$.
Thus, $\{u_n\}$ satisfies all the assumptions of Proposition \ref{pro1}. We shall show that the alternative $(ii)$ in Proposition \ref{pro1} occurs. Otherwise, by Lemma \ref{lemma3.2} and Lemma \ref{lemma3.4}, we have
\begin{equation*}
	0>m(b,\mu)\ge K(u)+c_{N,+}\ge 0.
\end{equation*}
Consequently, up to a subsequence $u_n\to u$ in $H^1_{rad}(\mathbb{R}^N)$ and $u$ solves equation \eqref{p1.1} for some $\lambda>0$. Moreover, $u$ is nonnegative and radially decreasing and the strong maximum principle implies $u>0$.

$(ii)$ Let $u_{b,\mu}\in S_c$ be a global minimizer of $I(u)$ given by $(i)$. By Lemma \ref{lemma3.4}, we also have $u_{b,\mu}\in A_{\xi_0^{\mu,1}}$. It is easy to check that $\xi_0^{\mu,1}\to0$ as $\mu\to0^+$, and then $|\nabla u_{b,\mu}|_2\to0$ as well. From \eqref{S} and \eqref{G-N}, we have
    \begin{align*}
        0&>m(b,{\mu})=I(u_{b,\mu})=\frac{a}{2}|\nabla u_{b,\mu}|_2^2+\frac{b}{4}|\nabla u_{b,\mu}|_2^4-\frac{\mu}{q}|u_{b,\mu}|_q^q-\frac{1}{2^*}|u_{b,\mu}|_{2^*}^{2^*}\\
    &\geq\frac{a}{2}|\nabla u_{b,\mu}|_2^2+\frac{b}{4}|\nabla u_{b,\mu}|_2^4-\frac{\mu}{q}C_q^qc^\frac{q(1-\delta_q)}{2}|\nabla u_{b,\mu}|_2^{q\delta_q}-\frac{S^{-\frac{2^*}{2}}}{2^*}|\nabla u_{b,\mu}|_2^{2^*}\to0\ \ {\rm as} \ \ \mu\to0^+,
    \end{align*}
    which implies that $m(b,\mu)\to0$ as $\mu\to 0^+$. The proof is complete.
\end{proof}

\section{The proof of Theorem \ref{th6} and Theorem \ref{th7}}\label{sec6}
\subsection{The proof of Theorem \ref{th6}}

In the subsection, we study the existence of the local minimizer for equation \eqref{p1.1} with $N=4$ and hence complete the proof of Theorem \ref{th6}. For any $u\in \bar{S}_c$, by $0<b<S^{-2}$, \eqref{S} and \eqref{G-N}, we have
\begin{equation}\label{J_I_inequality}
    \begin{aligned}
		J(u)&\geq I(u)=\frac{a}{2}|\nabla u|_2^2+\frac{b}{4}|\nabla u|_2^4-\frac{\mu}{q}  |u|_q^q-\frac{1}{4}|u|_{4}^{4}\\&\ge \frac{a}{2}|\nabla u|_2^2-\frac{\mu C_q^q}{q}c^{\frac{4-q}{2}}|\nabla u|_2^{2q-4}-\frac{1-bS^2}{4S^2}|\nabla u|_2^{4}=|\nabla u|_2^2f_c(|\nabla u|_2^2),
	\end{aligned}
\end{equation} where the function $f_c(k)$ is defined by
\begin{equation}\label{f}
    f_c(k):=\frac{a}{2}-\frac{\mu C_q^q}{q}c^{\frac{4-q}{2}}k^{q-3}-\frac{1-bS^2}{4S^2}k,\ \ \forall \ c,k\in (0,+\infty).
\end{equation}In the following, similar to Lemma 2.1 in \cite{jeanjean2022orbital}, we give some results of $f_c(k)$.
\begin{lemma}\label{lem3.1}
    Assume that $0<b<S^{-2}$, $2<q<3$ and $\mu>0$. For any $c>0$ fixed, then the function $f_c(k)$ has a unique global maximum and the maximum value satisfies
    \begin{equation}\label{maxf}
        \max_{k\in(0,+\infty)}f_c(k)=f_c(k_0)=\begin{cases}
		>0, &{\rm if} \ \ c<c_0;\\
		=0, &{\rm if} \ \ c=c_0;\\
        <0, &{\rm if} \ \ c>c_0,
	\end{cases}
    \end{equation}
    where $c_0$ is defined in \eqref{c0} and \begin{equation}
        k_c:=\left[\frac{4\mu C_q^qS^2(3-q)}{q(1-bS^2)}\right]^{\frac{1}{4-q}}c^{\frac{1}{2}}.
    \end{equation} In particular, we have $k_{c_0}=k_0$.
\end{lemma}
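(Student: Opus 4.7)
The plan is to treat $f_c$ as a one-variable calculus problem on $(0,+\infty)$. First I would record the two limiting behaviors: since $q-3<0$ and the coefficient of $k^{q-3}$ is strictly negative, $f_c(k)\to -\infty$ as $k\to 0^+$, and since the dominant term at infinity is the linear term $-\tfrac{1-bS^2}{4S^2}k$ (which is strictly negative because $0<b<S^{-2}$), we also have $f_c(k)\to -\infty$ as $k\to+\infty$. Hence $f_c$ must attain a maximum at an interior critical point.

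Next I would compute
\begin{equation*}
f_c'(k)=\frac{\mu C_q^{q}(3-q)}{q}\,c^{(4-q)/2}\,k^{q-4}-\frac{1-bS^2}{4S^2},
\end{equation*}
and observe that (as $q<3<4$) the first summand is strictly decreasing in $k$ from $+\infty$ to $0$, so $f_c'$ itself is strictly decreasing and has exactly one zero. Solving $f_c'(k)=0$ for $k$ gives explicitly the unique critical point
\begin{equation*}
k_c=\left[\frac{4\mu C_q^{q}S^2(3-q)}{q(1-bS^2)}\right]^{1/(4-q)} c^{1/2},
\end{equation*}
which is therefore the unique global maximum.

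The key algebraic step is to use the critical-point identity to rewrite the $k^{q-3}$ term in $f_c(k_c)$. From $f_c'(k_c)=0$ one gets
\begin{equation*}
\frac{\mu C_q^{q}}{q}\,c^{(4-q)/2}\,k_c^{q-3}=\frac{(1-bS^2)\,k_c}{4S^2(3-q)},
\end{equation*}
and substituting back yields the clean expression
\begin{equation*}
f_c(k_c)=\frac{a}{2}-\frac{(1-bS^2)(4-q)}{4S^2(3-q)}\,k_c.
\end{equation*}
This is strictly decreasing in $k_c$, and it vanishes precisely when $k_c=\frac{2aS^2(3-q)}{(1-bS^2)(4-q)}=k_0$, which is the definition of $k_0$ in \eqref{k0}. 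Solving $k_c=k_0$ for $c$ with the explicit formula for $k_c$ above reproduces exactly the constant $c_0$ in \eqref{c0}, giving simultaneously $k_{c_0}=k_0$ and the trichotomy: $f_c(k_c)>0$ iff $k_c<k_0$ iff $c<c_0$, with equality/reverse inequality in the other two cases. This establishes \eqref{maxf}.

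There is no real obstacle here; the argument is elementary calculus, and the only thing that requires care is the bookkeeping of exponents in the definitions of $k_c$, $k_0$ and $c_0$ so that $k_{c_0}=k_0$ drops out of the algebra with the precise constant in \eqref{c0}. The monotonicity of the linearized expression $f_c(k_c)$ in $k_c$ is what cleanly converts the sign comparison $c\lessgtr c_0$ into the sign of the maximum value.
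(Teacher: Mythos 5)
Your proposal is correct, and it is exactly the argument the paper intends: the paper states this lemma without proof (deferring to the analogous Lemma 2.1 of the cited Jeanjean--Le reference), and your computation --- the two limits $f_c(k)\to-\infty$ at $0^+$ and $+\infty$, the strict monotonicity of $f_c'$, the explicit critical point $k_c$, and the substitution of the critical-point identity to get $f_c(k_c)=\frac{a}{2}-\frac{(1-bS^2)(4-q)}{4S^2(3-q)}k_c$ --- is precisely the standard one, with all exponents checking out against \eqref{k0} and \eqref{c0}. The only discrepancy is in the paper's own statement, which writes the maximum value as $f_c(k_0)$ where it should read $f_c(k_c)$ (the maximizer is $k_c$, which coincides with $k_0$ only when $c=c_0$); your proof resolves this correctly.
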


\begin{lemma}\label{lem3.2}
    Assume that $0<b<S^{-2}$, $2<q<3$ and $\mu>0$. Let $c_1>0$ and $k_1>0$ be such that $f_{c_1}(k_1)\geq 0$. Then for any $c_2\in(0,c_1]$, it holds that
    \begin{equation}
        f_{c_2}(k_2)\geq 0, \ \ \forall \ k_2\in [\frac{c_2}{c_1}k_1,k_1].
    \end{equation}
\end{lemma}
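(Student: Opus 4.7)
The plan is to exploit the unimodal structure of $f_c$ already exposed by Lemma~\ref{lem3.1}, so that positivity on the interval $[\tfrac{c_2}{c_1}k_1,k_1]$ reduces to positivity at the two endpoints. First I would note that for every fixed $c>0$, differentiating the expression \eqref{f} gives
\[
f_c'(k)=\tfrac{\mu C_q^q}{q}(3-q)c^{\frac{4-q}{2}}k^{q-4}-\tfrac{1-bS^2}{4S^2},
\]
which is strictly decreasing in $k$ (because $q-4<0$ and $3-q>0$), so $f_c$ has a single critical point, and since $f_c(k)\to-\infty$ as $k\to 0^+$ (the term $k^{q-3}$ blows up) and as $k\to +\infty$ (the linear term dominates), $f_c$ is strictly increasing on $(0,k_0]$ and strictly decreasing on $[k_0,+\infty)$. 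In particular, the minimum of $f_{c_2}$ on any closed subinterval is attained at one of the two endpoints.

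Next I would check both endpoints. At the right endpoint $k=k_1$, since $\tfrac{4-q}{2}>0$ and $c_2\le c_1$, one has $c_2^{(4-q)/2}\le c_1^{(4-q)/2}$, so
\[
f_{c_2}(k_1)=\tfrac{a}{2}-\tfrac{\mu C_q^q}{q}c_2^{\frac{4-q}{2}}k_1^{q-3}-\tfrac{1-bS^2}{4S^2}k_1\;\ge\; f_{c_1}(k_1)\;\ge\;0.
\]
At the left endpoint $k=\tfrac{c_2}{c_1}k_1$, the key identity is the exponent arithmetic
\[
c_2^{\frac{4-q}{2}}\Bigl(\tfrac{c_2}{c_1}k_1\Bigr)^{q-3}=c_2^{\frac{q-2}{2}}\,c_1^{3-q}\,k_1^{q-3},
\]
and since $\tfrac{q-2}{2}>0$ and $c_2\le c_1$, the right-hand side is at most $c_1^{(q-2)/2}c_1^{3-q}k_1^{q-3}=c_1^{(4-q)/2}k_1^{q-3}$. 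Combined with the trivial bound $\tfrac{1-bS^2}{4S^2}\cdot\tfrac{c_2}{c_1}k_1\le\tfrac{1-bS^2}{4S^2}k_1$, this yields
\[
f_{c_2}\!\bigl(\tfrac{c_2}{c_1}k_1\bigr)\;\ge\;\tfrac{a}{2}-\tfrac{\mu C_q^q}{q}c_1^{\frac{4-q}{2}}k_1^{q-3}-\tfrac{1-bS^2}{4S^2}k_1=f_{c_1}(k_1)\;\ge\;0.
\]

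Finally I would combine the two pieces: by unimodality of $f_{c_2}$, for every $k_2\in[\tfrac{c_2}{c_1}k_1,k_1]$,
\[
f_{c_2}(k_2)\;\ge\;\min\!\Bigl\{f_{c_2}\!\bigl(\tfrac{c_2}{c_1}k_1\bigr),\,f_{c_2}(k_1)\Bigr\}\;\ge\;0,
\]
which is the desired conclusion. The only step with any substance is the scaling computation at the lower endpoint, where the cancellation $\tfrac{4-q}{2}+(q-3)=\tfrac{q-2}{2}$ is precisely what makes the factors of $c_2/c_1$ cooperate rather than compete; nothing else requires work beyond monotonicity in $c$ and unimodality in $k$.
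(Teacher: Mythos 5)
Your proposal is correct and follows essentially the same route as the paper: monotonicity of $f_c(k)$ in $c$ handles the right endpoint, the scaling computation (which the paper dismisses as ``direct calculations'') handles the left endpoint, and the unique-maximum/unimodal structure of $f_{c_2}$ from Lemma~\ref{lem3.1} rules out an interior dip. The only difference is presentational: you verify unimodality by differentiating and argue via the minimum at an endpoint, while the paper phrases the same fact as a contradiction with the uniqueness of the global maximum.
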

\begin{proof}
    It is easy to see that $f_c(k)$ is non-increasing with respect to $c$. Then, for any $c_2\in(0,c_1]$, we have \begin{equation}
        f_{c_2}(k_1)\geq f_{c_1}(k_1)\geq0.
    \end{equation}
    By direct calculations, one obtains that \begin{equation}
        f_{c_2}(\frac{c_2}{c_1}k_1)\geq f_{c_1}(k_1)\geq0.
    \end{equation} If $f_{c_2}(k)<0$ for some $k\in (\frac{c_2}{c_1}k_1,k_1)$, then $f_{c_2}(k)$ has a local minimum point in $(\frac{c_2}{c_1}k_1,k_1)$, which contradicts the fact $f_{c_2}(k)$ has a unique global maximum by Lemma \ref{lem3.1}. The proof is complete.
\end{proof}

\begin{lemma}\label{lem3.3}
    Assume that $0<b<S^{-2}$, $2<q<3$ and $\mu>0$. For any $c\in (0,c_0)$, it holds that
\begin{equation}\label{m}
    m(c):=m(c,b,\mu)=\inf_{u\in A_{k_0}(c)}I(u)<0<\inf_{u\in \partial{(A_{k_0}(c)})}I(u)
\end{equation}and
\begin{equation}\label{bm}
    \bar{m}(c):=\bar{m}(c,b,\mu)=\inf_{u\in A_{k_0}(c)}J(u)<0<\inf_{u\in \partial{(A_{k_0}(c)})}J(u).
\end{equation}
\end{lemma}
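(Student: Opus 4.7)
The plan is to split the lemma into two halves: the sign of the functionals on the boundary $\partial A_{k_0}(c)$, and the construction of test functions inside $A_{k_0}(c)$ realizing negative values. Both halves will exploit the inequality $J(u)\ge I(u)\ge |\nabla u|_2^2 f_c(|\nabla u|_2^2)$ from \eqref{J_I_inequality} together with the monotonicity/sign information for $f_c$ provided by Lemma \ref{lem3.1}.

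For the boundary estimate, if $u\in\partial A_{k_0}(c)$ then $|\nabla u|_2^2=k_0$, so \eqref{J_I_inequality} gives $J(u)\ge I(u)\ge k_0\, f_c(k_0)$. Since $c\in(0,c_0)$, Lemma \ref{lem3.1} says $f_c(k_0)=\max_{k>0}f_c(k)>0$, so $k_0 f_c(k_0)>0$. Taking the infimum over $u\in\partial A_{k_0}(c)$ yields
\[
\inf_{u\in\partial A_{k_0}(c)} I(u)\ge k_0 f_c(k_0)>0,\qquad \inf_{u\in\partial A_{k_0}(c)} J(u)\ge k_0 f_c(k_0)>0,
\]
which handles the right-hand inequalities in \eqref{m} and \eqref{bm}.

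For the left-hand inequalities, I will use the scaling $s\ast u$. Fix any $u\in\bar{S}_c$ and recall that for $N=4$ one has $|\nabla(s\ast u)|_2^2=e^{2s}|\nabla u|_2^2$ and
\[
I(s\ast u)=\frac{ae^{2s}}{2}|\nabla u|_2^2+\frac{be^{4s}}{4}|\nabla u|_2^4-\frac{\mu e^{q\delta_q s}}{q}|u|_q^q-\frac{e^{4s}}{4}|u|_4^4,
\]
and analogously for $J(s\ast u)$ with $a,b$ replaced by $a/(1-bS^2)$ and $b/(1-bS^2)$. The critical observation is that for $N=4$ and $2<q<3$ one has $q\delta_q=2(q-2)\in(0,2)$, so the exponent on the $L^q$-term is strictly smaller than the exponents $2$ and $4$ appearing on the other three terms. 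Factoring out $e^{q\delta_q s}$ gives
\[
I(s\ast u)=e^{q\delta_q s}\Bigl[-\tfrac{\mu}{q}|u|_q^q+o(1)\Bigr]\quad\text{as }s\to-\infty,
\]
and the same for $J(s\ast u)$. Hence for $s$ sufficiently negative, $I(s\ast u)<0$ and $J(s\ast u)<0$; moreover $e^{2s}|\nabla u|_2^2\to 0<k_0$, so $s\ast u\in A_{k_0}(c)$ for such $s$. Taking infima over $u\in A_{k_0}(c)$ gives $m(c)<0$ and $\bar m(c)<0$.

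There is no real obstacle here: the main input is the doubly critical feature $2^*=2+8/N=4$ which is exactly what makes the $L^4$ and nonlocal terms scale identically under $s\ast$, while the subcritical exponent of the $L^q$ term ($q\delta_q<2$) ensures it dominates in the regime $s\to-\infty$. The only point requiring a moment of care is verifying that the constant $k_0$ defined in \eqref{k0} is indeed the maximizer of $f_c$ and that $f_c(k_0)$ changes sign precisely at $c=c_0$, but this has already been recorded in Lemma \ref{lem3.1}; everything else in the argument is a direct application of \eqref{J_I_inequality}.
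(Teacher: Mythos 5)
Your proof is correct and takes essentially the same route as the paper's: the boundary positivity follows from $J(u)\geq I(u)\geq |\nabla u|_2^2 f_c(|\nabla u|_2^2)=k_0f_c(k_0)>0$ for $c\in(0,c_0)$, and the negativity of both infima follows from the mass-preserving scaling (the paper writes $u_t(x)=t^2u(tx)$, which is exactly your $s\ast u$ with $t=e^s$ in dimension $4$), where the $L^q$-term with exponent $q\delta_q=2q-4<2$ dominates as $s\to-\infty$. No issues.
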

\begin{proof}
    For any $u\in \bar{S}_c$, set that $u_t(x):=t^2u(tx)$. Then we can check that $u_t\in \bar{S}_c$ and $|\nabla u_t|_2^2=t^2|\nabla u|_2^2$. Moreover, we have
    \begin{equation*}
        I(u_t)\leq J(u_t)=\frac{at^2}{2(1-bS^2)}|\nabla u|^2_2+\frac{bt^4}{4(1-bS^2)}|\nabla u|^4_2-\frac{\mu t^{2q-4}}{q}|u|^q_q-\frac{t^4}{4}|u|^{4}_{4}.
    \end{equation*} Since $2<q<3$, there exists $t_0>0$ small enough such that $|\nabla u_{t_0}|_2^2<k_0$ and $I(u_{t_0})\leq J(u_{t_0})<0$, which implies that $u_{t_0}\in A_{k_0}(c)$ and $m(c)\leq \bar{m}(c)<0$. Moreover, by \eqref{J_I_inequality} and the fact that $f_c(k_0)>f_{c_0}(k_0)=0$ for any $c\in(0,c_0)$, we have $J(u)\geq I(u)\geq |\nabla u|_2^2f_c(|\nabla u|_2^2)=k_0f_c(k_0)>0$ for any $u\in \partial{(A_{k_0}(c))}$. Thus, \eqref{m} and \eqref{bm} hold. The proof is complete.
\end{proof}

\begin{lemma} \label{lem3.4}
Assume that $0<b<S^{-2}$, $2<q<3$ and $\mu>0$. It holds that
\begin{itemize}
    \item [$(i)$] Let $c\in(0,c_0)$. For any $\alpha\in(0,c)$, we have $m(c)\leq m(\alpha)+m(c-\alpha)$. Moreover, if $m(\alpha)$ or $m(c-\alpha)$ is achieved, then the inequality is strict.
    \item [$(ii)$] The function $c\mapsto m(c) $ is continuous on $(0,c_0)$.
\end{itemize}
\end{lemma}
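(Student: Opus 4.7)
For Part (i), my plan is to derive the subadditivity from the stronger statement that $c \mapsto m(c)/c$ is non-increasing on $(0,c_0)$, itself obtained through a scaling identity. Given $u \in A_{k_0}(c) \cap \bar S_c$ and $\theta > 1$, I would set $\tilde u(x) := u(x/\theta^{1/4})$. Then $\tilde u \in \bar S_{\theta c}$ with $|\nabla \tilde u|_2^2 = \theta^{1/2} |\nabla u|_2^2$, $|\tilde u|_q^q = \theta|u|_q^q$, and $|\tilde u|_4^4 = \theta|u|_4^4$, which yields the direct identity
\begin{equation*}
I(\tilde u) - \theta I(u) = \frac{a\theta^{1/2}(1 - \theta^{1/2})}{2}|\nabla u|_2^2 \leq 0,
\end{equation*}
with strict inequality whenever $|\nabla u|_2 > 0$. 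A key structural observation is that any $u \in A_{k_0}(\beta) \cap \bar S_\beta$ with $I(u) < 0$ must satisfy $|\nabla u|_2^2 < k^-(\beta)$, where $k^-(\beta) < k_0$ denotes the smaller zero of $f_\beta$; this follows from $I(u) \geq |\nabla u|_2^2 f_\beta(|\nabla u|_2^2)$ combined with $|\nabla u|_2^2 < k_0 < k^+(\beta)$.

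Hence near-minimizers of $m(c)$ all lie in $A_{k^-(c)}(c)$, so the scaled $\tilde u$ remains in $A_{k_0}(\theta c)$ whenever $\theta \leq (k_0/k^-(c))^2$. Taking infimum gives $m(\theta c) \leq \theta m(c)$ in this range, and for larger $\theta$ I would iterate by writing $\theta = \theta_1\cdots\theta_n$ with each $\theta_j$ in the admissible range for the corresponding intermediate mass. This gives the scaling inequality for all $\theta \in (1, c_0/c)$, equivalent to $m(c)/c$ being non-increasing. Then for $\alpha \in (0,c)$,
\begin{equation*}
m(\alpha) \geq \frac{\alpha}{c}\,m(c), \qquad m(c-\alpha) \geq \frac{c-\alpha}{c}\,m(c),
\end{equation*}
and summing yields $m(c) \leq m(\alpha) + m(c-\alpha)$. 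When $m(\alpha)$ is achieved by $u_\alpha$ with $|\nabla u_\alpha|_2 > 0$, the scaling identity gives $m(c) < (c/\alpha) m(\alpha)$ strictly, so the first inequality above is strict and the subadditivity becomes strict; the case when $m(c-\alpha)$ is achieved is symmetric.

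For Part (ii), continuity of $m$ on $(0, c_0)$ follows from the mass-adjusting scaling $u \mapsto \sqrt{c'/c}\,u$. For upper semicontinuity, given $c_n \to c$ and $u \in A_{k_0}(c)$ with $I(u) < m(c) + \varepsilon$, the rescaled $u_n := \sqrt{c_n/c}\,u$ has $|\nabla u_n|_2^2 \to |\nabla u|_2^2 < k_0$, so $u_n \in A_{k_0}(c_n)$ for $n$ large and $I(u_n) \to I(u)$, giving $\limsup m(c_n) \leq m(c)$. For lower semicontinuity, take near-minimizers $u_n \in A_{k_0}(c_n)$; by the structural observation above, $|\nabla u_n|_2^2 < k^-(c_n)$, and the rescaled $\tilde u_n := \sqrt{c/c_n}\,u_n$ satisfies $|\nabla \tilde u_n|_2^2 \to k^-(c) < k_0$ (using continuity of $c \mapsto k^-(c)$ via the implicit function theorem on $f_c$), hence $\tilde u_n \in A_{k_0}(c)$ for $n$ large and $I(\tilde u_n) = I(u_n) + o(1)$, yielding $m(c) \leq \liminf m(c_n)$. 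The main obstacle is the iteration step in Part (i): since $(k_0/k^-(c))^2 \to 1$ as $c \to c_0^-$, the admissible scaling window shrinks, so I must split $\theta$ into finitely many sub-scalings, each small enough to fit the window at the corresponding intermediate mass, with a bound that remains uniform on any compact subinterval of $(0, c_0)$ along which the iteration proceeds.
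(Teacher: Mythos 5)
Your proposal is correct and follows essentially the same route as the paper: the dilation $u\mapsto u(\theta^{-1/4}x)$ with the identity $I(\tilde u)-\theta I(u)=\tfrac{a}{2}\theta^{1/2}(1-\theta^{1/2})|\nabla u|_2^2\le 0$, the resulting bound $m(\theta\alpha)\le\theta m(\alpha)$, and the mass-rescaling $u\mapsto\sqrt{c'/c}\,u$ for continuity are exactly the paper's devices (applied there to $J$ and $\bar m$ in Lemma \ref{lem3.5}, with Lemma \ref{lem3.4} stated as analogous). The only difference is bookkeeping: the paper invokes Lemma \ref{lem3.2} to get the sharper bound $|\nabla u_n|_2^2<\frac{\alpha}{c}k_0$ on near-minimizers, which keeps the dilated function inside $A_{k_0}$ for the full range $\theta\in(1,c/\alpha]$ in a single step, whereas your weaker bound $|\nabla u|_2^2<k^-(\alpha)$ forces the finite iteration over sub-scalings --- valid, but avoidable.
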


The proof of Lemma \ref{lem3.4} is similar to the following lemma, so we omit it.

\begin{lemma} \label{lem3.5}
Assume that $0<b<S^{-2}$, $2<q<3$ and $\mu>0$. It holds that
\begin{itemize}
    \item [$(i)$] Let $c\in(0,c_0)$. For any $\alpha\in(0,c)$, we have $\bar{m}(c)\leq \bar{m}(\alpha)+\bar{m}(c-\alpha)$. Moreover, if $\bar{m}(\alpha)$ or $\bar{m}(c-\alpha)$ is achieved,  then the inequality is strict.
    \item [$(ii)$] The function $c\mapsto \bar{m}(c) $ is continuous on $(0,c_0)$.
\end{itemize}
\end{lemma}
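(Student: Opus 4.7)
My plan is to mirror the structure of the (omitted) proof of Lemma \ref{lem3.4}, replacing $I$ by $J$ throughout. The core idea is that strict subadditivity of $\bar m$ will follow from strict monotonicity of the quotient $c\mapsto \bar m(c)/c$, obtained via a scaling argument, while continuity is obtained from upper and lower semicontinuity under small perturbations of the mass.

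For part (i), I would fix $0<c_1<c_2<c_0$, set $\theta := c_2/c_1>1$, and pick a near-minimizer $u\in A_{k_0}(c_1)$ for $\bar m(c_1)$ with $J(u)<0$; the coercivity estimate \eqref{J_I_inequality} together with Lemma \ref{lem3.1} forces $|\nabla u|_2^2<k_1(c_1)$, where $k_1(c_1)$ denotes the smaller positive root of $f_{c_1}$. Introduce the scaling $w(x):=u(\theta^{-1/4}x)$, so that $|w|_2^2=\theta c_1=c_2$, $|\nabla w|_2^2=\theta^{1/2}|\nabla u|_2^2$, $|w|_q^q=\theta|u|_q^q$ and $|w|_4^4=\theta|u|_4^4$. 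A direct computation then gives
\[
J(w)=\theta J(u)+\frac{a(\theta^{1/2}-\theta)}{2(1-bS^2)}|\nabla u|_2^2<\theta J(u),
\]
since $\theta>1$ and $|\nabla u|_2>0$. To verify $w\in A_{k_0}(c_2)$ I would show, by implicit differentiation of $f_c(k_1(c))=0$ combined with the sign condition $f_c'(k_1(c))>0$, that $c\mapsto k_1(c)^2/c$ is strictly increasing on $(0,c_0]$ with terminal value $k_0^2/c_0$; this yields $\theta^{1/2}|\nabla u|_2^2<k_0\sqrt{c_2/c_0}<k_0$. Taking the infimum over such near-minimizers gives $\bar m(c_2)\leq(c_2/c_1)\bar m(c_1)$, with strict inequality whenever $\bar m(c_1)$ is attained. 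Consequently $c\mapsto \bar m(c)/c$ is nonincreasing on $(0,c_0)$, so $\bar m(t)\geq t\bar m(c)/c$ for every $t\in(0,c]$; summing at $t=\alpha$ and $t=c-\alpha$ yields $\bar m(\alpha)+\bar m(c-\alpha)\geq\bar m(c)$, and if either $\bar m(\alpha)$ or $\bar m(c-\alpha)$ is attained then one of the summed inequalities is strict, upgrading the subadditivity to strict.

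For part (ii), given $c_n\to c^*$ in $(0,c_0)$, I would use the same mass-adjusting scaling $v(x):=u(\sigma x)$ with $\sigma=(c^*/c_n)^{1/4}$ to transport near-minimizers between $A_{k_0}(c^*)$ and $A_{k_0}(c_n)$ (admissibility for large $n$ again uses the monotonicity of $k_1(c)^2/c$). Since $\sigma\to 1$, every term of $J$ varies continuously in $\sigma$, yielding both $\limsup \bar m(c_n)\leq\bar m(c^*)$ and $\liminf \bar m(c_n)\geq\bar m(c^*)$, hence continuity.

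The main obstacle is keeping the scaled competitor inside the open set $A_{k_0}$: the scaling that produces the clean identity above inflates $|\nabla u|_2^2$ by the factor $\theta^{1/2}>1$, which is incompatible with the crude bound $|\nabla u|_2^2<k_0$. The fix is the sharper a priori bound $|\nabla u|_2^2<k_1(c_1)$ for near-minimizers with $J<0$, combined with the strict inequality $k_1(c)^2/c<k_0^2/c_0$ throughout $(0,c_0)$. Verifying this monotonicity by chasing signs through the implicit-function computation (using only $f_c'(k_1)>0$ and $2<q<3<4$) is the technical heart of the argument.
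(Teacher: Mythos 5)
Your argument is correct and follows the same core route as the paper's proof: the dilation $w(x)=u(\theta^{-1/4}x)$, the identity $J(w)=\theta J(u)+\frac{a(\theta^{1/2}-\theta)}{2(1-bS^2)}|\nabla u|_2^2$, and the resulting inequality $\bar m(\theta\alpha)\leq\theta\bar m(\alpha)$ (strict when the infimum is attained), followed by the same convexity-type bookkeeping $\bar m(c)=\frac{\alpha}{c}\bar m(c)+\frac{c-\alpha}{c}\bar m(c)\leq\bar m(\alpha)+\bar m(c-\alpha)$. You deviate in two places, both soundly. First, to keep the scaled competitor inside $A_{k_0}$, the paper invokes Lemma \ref{lem3.2} to conclude that near-minimizers at mass $\alpha$ satisfy $|\nabla u_n|_2^2<\frac{\alpha}{c}k_0$, whereas you use the bound $|\nabla u|_2^2<k_1(c_1)$ by the smaller root of $f_{c_1}$ together with the strict monotonicity of $c\mapsto k_1(c)^2/c$. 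That monotonicity does hold: substituting $k=\rho\sqrt{c}$ turns $f_c(k)=0$ into $g(\rho)=\frac{a}{2\sqrt{c}}$ with $g(\rho)=\frac{\mu C_q^q}{q}\rho^{q-3}+\frac{1-bS^2}{4S^2}\rho$ independent of $c$ and possessing a single interior minimum, so the smaller root $\rho_-(c)=k_1(c)/\sqrt{c}$ increases to $k_0/\sqrt{c_0}$ as $c\uparrow c_0$; this is cleaner than the implicit differentiation you sketch and is essentially a repackaging of Lemma \ref{lem3.2}. Second, for continuity the paper splits into the cases $\bar c_n<c$ (handled by subadditivity) and $\bar c_n\geq c$ (handled by the amplitude rescaling $\sqrt{c/\bar c_n}\,u_n$, which only shrinks the gradient), while you use the mass-adjusting dilation uniformly in both directions; this works precisely because your sharper gradient bound keeps the transported near-minimizers strictly inside $A_{k_0}$ even when the dilation inflates the gradient norm. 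Net effect: the same proof, with the admissibility check routed through an equivalent monotonicity statement.
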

\begin{proof}
    $(i)$ Let $\{u_n\}\in A_{k_0}(\alpha)$ be a minimizing sequence with respect to $\bar{m}(\alpha)$, i.e., $\lim_{n\to \infty}J(u_n)=\bar{m}(\alpha)$. By Lemma \ref{lem3.3}, we have $\bar{m}(\alpha)<0$ and $0>J(u_n)\geq|\nabla u_n|_2^2f_\alpha(|\nabla u_n|_2^2)$ for $n\in \mathbb N$ large enough. Then, in view of Lemma \ref{lem3.2} and the fact that $f_c(k_0)>f_{c_0}(k_0)=0$ for any $c\in(0,c_0)$, we have $f_{\alpha}(k)\geq 0$ for any $k\in [\frac{\alpha}{c}k_0,k_0]$, which implies that \begin{equation*}
        |\nabla u_n|_2^2<\frac{\alpha}{c}k_0
    \end{equation*} for $n\in \mathbb N$ large enough. For any $\theta \in (1,\frac{c}{\alpha}]$, set that $v_n(x):=u_n(\theta ^{-\frac{1}{4}}x)$. Then we have $|v_n|_2^2=\theta|u_n|_2^2=\theta\alpha$ and \begin{equation*}
        |\nabla v_n|_2^2=\theta^{\frac{1}{2}}|\nabla u_n|_2^2\leq (\frac{c}{\alpha})^{\frac{1}{2}}\frac{\alpha}{c}k_0<k_0,
    \end{equation*} which means that $v_n\in A_{k_0}(\theta\alpha)$. Thus, it follows from \eqref{bm} that \begin{equation}\label{m(alpha)}
    \begin{aligned}
          \bar{m}(\theta\alpha)&\leq J(v_n) \\ & =\frac{a\theta^{\frac{1}{2}}}{2(1-bS^2)}|\nabla u_n|^2_2+\frac{b\theta}{4(1-bS^2)}|\nabla u_n|^4_2-\frac{\mu\theta}{q}|u_n|^q_q-\frac{\theta}{4}|u_n|^{4}_{4}\\ &\leq\frac{a\theta}{2(1-bS^2)}|\nabla u_n|^2_2+\frac{b\theta}{4(1-bS^2)}|\nabla u_n|^4_2-\frac{\mu\theta}{q}|u_n|^q_q-\frac{\theta}{4}|u_n|^{4}_{4}\\&=\theta J(u_n)=\theta \bar{m}(\alpha)+o(1),
    \end{aligned}
    \end{equation} which implies that \begin{equation*}
        \bar m(\theta\alpha)\leq \theta\bar{m}(\alpha), \ \ \forall \ \theta\in(1,\frac{c}{\alpha}].
    \end{equation*} If $\bar{m}(\alpha)$ is achieved by $u\in A_{k_0}(\alpha)$, then we can choose $u_n \equiv u$ in \eqref{m(alpha)} and hence the strict inequality holds. Moreover, it follows that
    $$\bar{m}(c)=\frac{c-\alpha}{c}\bar{m}(c)+\frac{\alpha}{c}\bar{m}(c)\leq \bar{m}(c-\alpha)+\bar{m}(\alpha)$$ with a strict inequality if $\bar{m}(c-\alpha)$ and $\bar{m}(\alpha)$ is achieved.

    $(ii)$ Let $\{\bar{c}_n\}\subset(0,c_0)$ be such that $\lim_{n\to\infty} \bar{c}_n=c\subset(0,c_0)$. For any $\alpha\in (0,c_0)$, Lemma \ref{lem3.3} implies that $\bar{m}(\alpha)<0$. If $\bar{c}_n<c$, then it follows from $(i)$ that \begin{equation}\label{m_cn_inequality}
        \bar{m}(c)\leq \bar{m}(\bar{c}_n)+\bar{m}(c-\bar{c}_n)<\bar{m}(\bar{c}_n).
    \end{equation} If $\bar{c}_n\geq c$, we let $u_n\in A_{k_0}(\bar{c}_n)$ be such that $J(u_n)\leq \bar{m}(\bar{c}_n)+\frac{1}{n}$. Set that $v_n=\sqrt{\frac{c}{\bar{c}_n}}u_n$. Then $v_n\in A_{k_0}(c)$. Moreover, we have \begin{equation*}
    \begin{aligned}
        \bar{m}(c)&\leq J(v_n)=J(u_n)+[J(v_n)-J(u_n)]\\&=J(u_n)+\frac{a}{2(1-bS^2)}\frac{c-\bar{c}_n}{\bar{c}_n}|\nabla u|^2_2+\frac{b}{4(1-bS^2)}\frac{c^2-\bar{c}_n^2}{\bar{c}_n^2}|\nabla u|^4_2\\&-\frac{\mu(c^{\frac{q}{2}}-\bar{c}_n^{\frac{q}{2}})}{q\bar{c}_n^{\frac{q}{2}}}|u|^q_q-\frac{c^2-\bar{c}_n^2}{4\bar{c}_n^2}|u|^{4}_{4}\\&=J(u_n)+o(1)\\&\leq \bar{m}(\bar{c}_n)+o(1).
    \end{aligned}
    \end{equation*} Combining this with \eqref{m_cn_inequality}, we have \begin{equation}\label{mm1}
        \bar{m}(c)\leq \bar{m}(\bar{c}_n)+o(1).
    \end{equation} For $\varepsilon>0$ small, there exists $u\in A_{k_0}(c)$ such that $J(u)<\bar{m}(c)+\varepsilon$.
    Set that $w_n=\sqrt{\frac{\bar{c}_n}{c}}u$. Then we have $w_n\in A_{k_0}(\bar{c}_n)$ for $n\in \mathbb N$ large enough. Since $\lim_{n\to \infty}J(w_n)=J(u)$, one obtains that
    $\bar{m}(\bar{c}_n)\leq J(w_n)=J(u)+[J(w_n)-J(u)]=J(u)+o(1)<\bar{m}(c)+\varepsilon+o(1)$. Thus, since $\varepsilon>0$ is arbitrary, we have \begin{equation}\label{mm2}
        \bar{m}(\bar{c}_n)<\bar{m}(c)+o(1).
    \end{equation} Both \eqref{mm1} and \eqref{mm2} conclude that $\lim_{n\to \infty}\bar{m}(\bar{c}_n)=\bar{m}(c)$. The proof is complete.
\end{proof}
\begin{proof}[The proof of Theorem \ref{th6}]
  $(i)$ Let $\{u_n\}\subset A_{k_0}(c)$ be a minimizing sequence with respect to $m(c)$. Since  $\{|u_n|\}\subset A_{k_0}(c)$ is also a minimizing sequence with respect to $m(c)$, so we can assume that $u_n\geq0$. It follows from Lemma \ref{lem3.3} that \begin{equation}\label{I(un)}
        |u_n|_2^2=c,\ \ |\nabla u_n|_2^2<k_0<+\infty,\ \ I(u_n)=m(c)+o(1)<0.
    \end{equation}
Let $$\delta:=\lim\sup_{n\to\infty}\sup_{y\in\mathbb R^4}\int_{B_1(y)}|u_n|^2dx\geq0.$$ If $\delta=0$, then it follows from Lions' concentration compactness principle in \cite{minimax} that $u_n\to 0$ in $L^s(\mathbb R^4)$ for $s\in(2,4)$. Then, together with \eqref{S}, \eqref{f}, \eqref{maxf} and \eqref{I(un)}, we have \begin{equation*}
        \begin{aligned}
            m(c)+o(1)=I(u_n)&=\frac{a}{2}|\nabla u_n|_2^2+\frac{b}{4}|\nabla u_n|_2^4-\frac{1}{4}|u_n|_4^4-\frac{\mu}{q}|u_n|_q^q\\&=\frac{a}{2}|\nabla u_n|_2^2+\frac{b}{4}|\nabla u_n|_2^4-\frac{1}{4}|u_n|_4^4+o(1)\\&\geq\frac{a}{2}|\nabla u_n|_2^2-\frac{1-bS^2}{4S^2}|\nabla u_n|_2^4+o(1)\\&\geq|\nabla u_n|_2^2\left(\frac{a}{2}-\frac{1-bS^2}{4S^2}k_0\right)+o(1)\\&=|\nabla u_n|_2^2\left[f_c(k_0)+\frac{\mu C_q^q}{q}c^{\frac{4-q}{2}}k_0^{q-3}\right]+o(1)\\&\geq o(1),
        \end{aligned}
    \end{equation*}which contradicts the fact that $m(c)<0$. Thus, $\delta>0$ and there exists a sequence $\{y_n\}\in\mathbb R^4$ such that, up to a subsequence, $\int_{B_1(y_n)}|u_n|^2dx>\frac{\delta}{2}.$ Set that $\tilde{u}_n:=u_n(x+y_n)$. Then $\int_{B_1(0)}|\tilde{u} _n|^2dx>\frac{\delta}{2}.$ Hence, there exists a $\tilde{u}\in H^1(\mathbb R^4)\setminus\{0\}$ with $\tilde{u}\geq 0$ such that, up to a subsequence, \begin{equation}\label{tu_n}
        \begin{cases}
		\tilde{u}_n\rightharpoonup \tilde{u} &{\rm in} \ \ H^1(\mathbb R^4);\\
		\tilde{u}_n\to\tilde{u} &{\rm in} \ \ L^s_{loc}(\mathbb R^4), \ \ \forall \ s\in(2,4);\\
        \tilde{u}_n(x)\to\tilde{u}(x) &{\rm a.e. \ on} \ \ \mathbb R^4.
	\end{cases}
    \end{equation} Furthermore, by the variance of translations and \eqref{I(un)}, we have \begin{equation}\label{I(tun)}
        0<|\tilde{u}|_2^2\leq|\tilde{u}_n|_2^2=c,\ \ |\nabla \tilde{u}_n|_2^2<k_0<+\infty,\ \ I(\tilde{u}_n)=m(c)+o(1).
    \end{equation}
Set that $v_n:=\tilde{u}_n-\tilde{u}$. By \eqref{tu_n} and the Brezis-Lieb Lemma, one obtains that \begin{equation}\label{ntu}
        |\nabla\tilde{u}_n|_2^2=|\nabla\tilde{u}|_2^2+|\nabla v_n|_2^2+o(1), \ \ |\nabla\tilde{u}_n|_2^4=|\nabla\tilde{u}|_2^4+|\nabla v_n|_2^4+2|\nabla\tilde{u}|_2^2|\nabla v_n|_2^2+o(1)
    \end{equation}and \begin{equation}\label{bl}
        c=|\tilde{u}_n|_2^2=|\tilde{u}|_2^2+|v_n|_2^2+o(1),\ \ |\tilde{u}_n|_4^4=|\tilde{u}|_4^4+|v_n|_4^4+o(1).
    \end{equation} It follows that \begin{equation}\label{IuIv}
        m(c)=I(\tilde{u}_n)+o(1)=I(\tilde{u})+I(v_n)+\frac{b}{2}|\nabla\tilde{u}|_2^2|\nabla v_n|_2^2+o(1).
    \end{equation}
    Now, we prove that $v_n\to0$ in $H^1(\mathbb R^4)$ and henceforth $\tilde{u}_n\to\tilde{u}$ in $H^1(\mathbb R^4)$. This proof is divided into two steps:

    Firstly, we claim that $|v_n|_2^2\to 0$ as $n\to\infty$, i.e., $|\tilde{u}|_2^2=c$. Suppose by contradiction that $|\tilde{u}|_2^2=:\tilde{c}<c$. From \eqref{ntu} and \eqref{bl}, for $n\in\mathbb N$ large enough, we have \begin{equation*}
        \alpha_n:=|v_n|_2^2\leq c, \ \ |\nabla v_n|_2^2\leq |\nabla\tilde{u}_n|_2^2<k_0,
    \end{equation*}which implies that $v_n\in A_{k_0}(\alpha_n)$ and $I(v_n)\geq m(\alpha_n)$. Then, together with \eqref{IuIv}, one obtains that \begin{equation}
         m(c)=I(\tilde{u})+I(v_n)+\frac{b}{2}|\nabla\tilde{u}|_2^2|\nabla v_n|_2^2+o(1)\geq I(\tilde{u})+m(\alpha_n)+o(1).
    \end{equation} By Lemma \ref{lem3.5} $(ii)$ and \eqref{bl}, we have \begin{equation}\label{mIm}
        m(c)\geq I(\tilde{u})+m(c-\tilde{c}).
    \end{equation} We also have that $\tilde{u}\in\overline{A_{k_0}(\tilde{c})}$ by the weak limit, which implies that $I(\tilde{u})\geq m(\tilde{c})$. If $I(\tilde{u})>m(\tilde{c})$, then it follows from \eqref{mIm} and Lemma \ref{lem3.5} $(i)$ that $$m(c)> m(\tilde{c})+m(c-\tilde{c})\geq m(c),$$which is a contradiction. If $I(\tilde{u})=m(\tilde{c})$, then the strict inequality in Lemma \ref{lem3.5} $(i)$ holds and $$m(c)\geq m(\tilde{c})+m(c-\tilde{c})>m(c),$$which is also a contradiction. Thus, $|\tilde{u}|_2^2=c$ and $\tilde{u}\in\overline{A_{k_0}(c)}$ by the weak limit.

    Secondly, we claim that $|\nabla v_n|_2^2\to 0$ as $n\to\infty$. It follows from \eqref{f}, \eqref{maxf}, \eqref{IuIv} and $I(\tilde{u})\geq m(c)$ that \begin{equation*}
    \begin{aligned}
         o(1)&\geq\frac{a}{2}|\nabla v_n|_2^2+\frac{b}{4}|\nabla v_n|_2^4-\frac{1}{4}|v_n|_4^4+\frac{b}{2}|\nabla\tilde{u}|_2^2|v_n|_2^2+o(1)\geq\frac{a}{2}|\nabla v_n|_2^2-\frac{1-bS^2}{4S^2}|\nabla v_n|_2^4+o(1)\\
         &\geq|\nabla v_n|_2^2\left(\frac{a}{2}-\frac{1-bS^2}{4S^2}k_0\right)+o(1)=|\nabla v_n|_2^2\left[f_c(k_0)+\frac{\mu C_q^q}{q}c^{\frac{4-q}{2}}k_0^{q-3}\right]+o(1),
    \end{aligned}
    \end{equation*} which implies that $|\nabla v_n|_2^2\to 0$ as $n\to \infty$. Together with the fact that $|v_n|_2^2\to0$ as $n\to\infty$, we have $v_n\to0$ in $H^1(\mathbb R^4)$, i.e., $\tilde{u}_n\to\tilde{u}$ in $H^1(\mathbb R^4)$. Thus, \begin{equation*}
       |\tilde{u}|_2^2=c,\ \ |\nabla \tilde{u}|_2^2\leq k_0, \ \ I(\tilde{u})=m(c).
    \end{equation*} Combining this with Lemma \ref{lem3.3}, we know that $|\nabla\tilde{u}|_2^2<k_0$ and henceforth $\tilde{u}\in A_{k_0}(c)$. By the Corollary 2.4 in \cite{chen2024normalized}, one obtains that $I|'_{S_c}(\tilde{u})=0$ and henceforth there exists a lagrange multiplier $\tilde{\lambda}_c\in \mathbb R$ such that $$-(a+b|\nabla\tilde{u}|_2^2)\Delta\tilde{u}+\tilde{\lambda}_c\tilde{u}=\tilde{u}^3+\mu|\tilde{u}|^{q-2}\tilde{u}.$$ It is easy to check that $\tilde{\lambda}_c>0$. Since $\tilde{u}\geq 0$ and $\tilde{u}\neq0$, the strong maximum principle implies that $\tilde{u}>0$.

     $(ii)$ Similar to $(i)$, by Lemmas \ref{lem3.3} and \ref{lem3.5}, we can prove that a positive  $\bar{u}\in A_{k_0}(c)$ satisfies that  $J(\bar{u})=\bar{m}(c)<0$ and for a $\bar{\lambda}_c>0$, \begin{equation}\label{new_equation}
        - \left [ \frac{a}{(1-bS^2)}+\frac{b}{(1-bS^2)}|\nabla \bar{u}_c|_2^2\right ]\Delta\bar{u}_c+\bar{\lambda}_c\bar{u}_c=\bar{u}_c^3+\mu|\bar{u}_c|^{q-2}\bar{u}_c.
    \end{equation}
     The proof is complete.
\end{proof}
\subsection{The proof of Theorem \ref{th7}}
%\section{The proof of Theorem \ref{th7}}\label{sec7}
In the  subsection, we establish the threshold of the mountain pass level to prove the existence of the mountain pass solution, thereby proving Theorem \ref{th7}. By a standard argument, we obtain the Pohozaev-type functional related to \eqref{new_equation} as follows.
\begin{lemma}\label{new_Pohozaev}
Assume that $0<b<S^{-2}$, $2<q<3$, $\mu>0$ and $c\in(0,c_0)$. It holds that
   \begin{equation}
    \frac{a}{(1-bS^2)}|\nabla \bar{u}_c|_2^2+\frac{b}{(1-bS^2)}|\nabla \bar{u}_c|_2^4-|\bar{u}_c|_4^4-\frac{\mu(2q-4)}{q}|\bar{u}_c|_q^q=0.
\end{equation}
\end{lemma}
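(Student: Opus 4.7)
The plan is to derive this identity directly from the fact that $\bar u_c$ is a local minimizer of $J$ on $\bar S_c$, via an $L^2$-preserving dilation. This converts the Pohozaev identity into a one-variable stationarity condition, which is by far the cleanest route in this setting.

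For $s\in\mathbb{R}$ and $u\in H^1(\mathbb{R}^4)$, set $(s*u)(x):=e^{2s}u(e^s x)$; since $N=4$ this preserves the $L^2$ norm, so $s*\bar u_c\in\bar S_c$ for every $s\in\mathbb{R}$. A change of variables, together with the identity $q\delta_q=N(q-2)/2=2q-4$ for $N=4$, gives
\begin{align*}
J(s*u)&=\frac{ae^{2s}}{2(1-bS^2)}|\nabla u|_2^2+\frac{be^{4s}}{4(1-bS^2)}|\nabla u|_2^4\\
&\quad-\frac{\mu e^{(2q-4)s}}{q}|u|_q^q-\frac{e^{4s}}{4}|u|_4^4.
\end{align*}
Since $\bar u_c$ is a local minimizer of $J$ on $\bar S_c$ (Theorem~\ref{th6}$(ii)$), the smooth one-variable map $s\mapsto J(s*\bar u_c)$ attains a local minimum at $s=0$, so its derivative vanishes there. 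Differentiating the display above at $s=0$ and equating to zero yields exactly
\[
\frac{a}{1-bS^2}|\nabla\bar u_c|_2^2+\frac{b}{1-bS^2}|\nabla\bar u_c|_2^4-|\bar u_c|_4^4-\frac{\mu(2q-4)}{q}|\bar u_c|_q^q=0,
\]
which is the claim.

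As a cross-check (and an alternative derivation), one can follow the classical route: multiply \eqref{new_equation} by $x\cdot\nabla\bar u_c$ and integrate over $\mathbb{R}^4$. After the standard integration by parts, using $\int(-\Delta u)(x\cdot\nabla u)\,dx=\frac{2-N}{2}|\nabla u|_2^2$ and $\int f(u)(x\cdot\nabla u)\,dx=-N\int F(u)\,dx$ (with $F'=f$), one obtains for $N=4$
\[
-(\alpha+\beta|\nabla\bar u_c|_2^2)|\nabla\bar u_c|_2^2-2\bar\lambda_c|\bar u_c|_2^2=-|\bar u_c|_4^4-\tfrac{4\mu}{q}|\bar u_c|_q^q,
\]
with $\alpha=a/(1-bS^2)$, $\beta=b/(1-bS^2)$. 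Combining this with the energy identity obtained by testing \eqref{new_equation} against $\bar u_c$, namely $(\alpha+\beta|\nabla\bar u_c|_2^2)|\nabla\bar u_c|_2^2+\bar\lambda_c|\bar u_c|_2^2=|\bar u_c|_4^4+\mu|\bar u_c|_q^q$, and taking the combination $2(\text{energy})-(\text{Pohozaev})$ eliminates $\bar\lambda_c$ and produces the same identity.

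The main (mild) technical point is differentiability of $s\mapsto J(s*\bar u_c)$ at $s=0$, which follows from the continuity of the scaling action on $H^1(\mathbb{R}^4)$ and the $C^1$ regularity of each term of $J$; for the alternative route, one must justify vanishing of the boundary terms at infinity in the integration by parts, which follows from standard elliptic regularity applied to \eqref{new_equation} (giving $\bar u_c\in W^{2,p}_{\rm loc}$ with the decay needed to discard boundary terms). Neither is a real obstacle, so the dilation-based argument above completes the proof in a few lines.
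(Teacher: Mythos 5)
Your proposal is correct, and both of your computations check out: for $N=4$ one has $|\nabla(s*u)|_2^2=e^{2s}|\nabla u|_2^2$, $|s*u|_4^4=e^{4s}|u|_4^4$, $|s*u|_q^q=e^{(2q-4)s}|u|_q^q$, and differentiating $J(s*\bar u_c)$ at $s=0$ gives exactly the claimed identity; the cross-check via $2(\text{Nehari})-(\text{Pohozaev})$ with the frozen coefficient $\alpha+\beta|\nabla\bar u_c|_2^2$ eliminates $\bar\lambda_c$ and yields the same expression. The paper offers no argument beyond the phrase ``by a standard argument \ldots related to \eqref{new_equation}'', which is precisely your second route: derive the classical Pohozaev identity for the semilinear equation \eqref{new_equation} (treating $\tfrac{a+b|\nabla\bar u_c|_2^2}{1-bS^2}$ as a constant) and combine it with the Nehari identity. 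Your primary, dilation-based route is genuinely different in flavor and arguably cleaner here: it uses only the variational characterization of $\bar u_c$ as a minimizer of $J$ over the relatively open set $A_{k_0}(c)$ (so that $s*\bar u_c$ remains admissible for $|s|$ small, since $|\nabla(s*\bar u_c)|_2^2=e^{2s}|\nabla\bar u_c|_2^2<k_0$), and it sidesteps entirely the elliptic regularity and decay estimates needed to justify the integration by parts in the classical computation. It is also consistent with the fiber-map formalism $(\Psi_u)'(0)=P(u)$ that the paper uses elsewhere. The one point worth making explicit in your write-up is that the local minimality is over $A_{k_0}(c)$ and that the dilation curve stays inside this set for small $|s|$; with that noted, the proof is complete.
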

\begin{lemma}\label{mountainpass}
    Assume that $0<b<S^{-2}$, $2<q<3$, $\mu>0$ and $c\in(0,c_0)$. Then there exist $d>0$ such that \begin{equation}\label{M}
        M(c):=\inf_{\gamma\in \Gamma_c}\max_{t\in[0,1]}I(\gamma(t))\geq d>\sup_{\gamma\in\Gamma_c}\max\{I(\gamma(0)),I(\gamma(1))\},
    \end{equation}where
    \begin{equation}
        \Gamma_c:=\{\gamma\in \mathcal{C}([0,1], S_c):\gamma(0)=\bar{u}_c, I(\gamma(1))<2m(c)\}.
    \end{equation}
\end{lemma}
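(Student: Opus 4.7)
The plan is to exploit the barrier provided by $\partial A_{k_0}(c)$, on which \eqref{J_I_inequality} and Lemma~\ref{lem3.1} force $I$ to be uniformly positive. The first task is to exhibit $v_1\in S_c$ with $I(v_1)<2m(c)$, so that $\Gamma_c\neq\emptyset$. Since $b<S^{-2}$, the truncated Aubin--Talenti construction used in the proof of Theorem~\ref{th5} yields a radial $u_\varepsilon$, and after normalizing in $L^2$ to mass $\sqrt c$ one obtains a radial $v_\varepsilon\in S_c$ with $|v_\varepsilon|_4^4>b|\nabla v_\varepsilon|_2^4$ for $\varepsilon$ small (the ratio $|u|_4^4/|\nabla u|_2^4$ is invariant under multiplication by scalars and approaches $S^{-2}>b$). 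Setting $\phi_t(x):=t^2 v_\varepsilon(tx)\in S_c$ (which preserves $|\cdot|_2^2=c$), a direct scaling computation gives
\[
I(\phi_t)=\tfrac{a}{2}t^2|\nabla v_\varepsilon|_2^2+\tfrac{t^4}{4}\bigl(b|\nabla v_\varepsilon|_2^4-|v_\varepsilon|_4^4\bigr)-\tfrac{\mu}{q}t^{2q-4}|v_\varepsilon|_q^q.
\]
Because $2q-4<4$ and the coefficient of $t^4$ is strictly negative, $I(\phi_t)\to-\infty$ as $t\to+\infty$. Pick $t_\star$ with $I(\phi_{t_\star})<2m(c)$. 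Taking $\bar u_c$ to be radial (by Schwarz symmetrization, as in the proof of Theorem~\ref{th6}), path-connectedness of $S_c$ in $H^1_{\rm rad}(\mathbb R^4)$ allows us to join $\bar u_c$ to $\phi_{t_\star}$ by a continuous path, producing an element of $\Gamma_c$.

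\textbf{Barrier crossing.} Fix $\gamma\in\Gamma_c$. Theorem~\ref{th6}$(ii)$ gives $\gamma(0)=\bar u_c\in A_{k_0}(c)$, so $|\nabla\gamma(0)|_2^2<k_0$. Since $m(c)<0$ and $\gamma(1)$ satisfies $I(\gamma(1))<2m(c)<m(c)$, while both $\inf_{A_{k_0}(c)}I=m(c)$ and $\inf_{\partial A_{k_0}(c)}I\ge k_0 f_c(k_0)>0$ exceed $2m(c)$ (Lemma~\ref{lem3.3} and \eqref{J_I_inequality}), we conclude $\gamma(1)\notin\overline{A_{k_0}(c)}$, hence $|\nabla\gamma(1)|_2^2>k_0$. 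By continuity of $t\mapsto|\nabla\gamma(t)|_2^2$, there exists $t_\gamma\in(0,1)$ with $|\nabla\gamma(t_\gamma)|_2^2=k_0$, and \eqref{J_I_inequality} together with Lemma~\ref{lem3.1} gives
\[
I(\gamma(t_\gamma))\ge |\nabla\gamma(t_\gamma)|_2^2\,f_c(|\nabla\gamma(t_\gamma)|_2^2)=k_0 f_c(k_0)>0.
\]
Set $d:=k_0 f_c(k_0)$; then $\max_{t\in[0,1]}I(\gamma(t))\ge d$, whence $M(c)\ge d$.

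\textbf{Endpoint comparison.} For every $\gamma\in\Gamma_c$, $I(\gamma(0))=I(\bar u_c)\le J(\bar u_c)=\bar m(c)<0$ and $I(\gamma(1))<2m(c)<0$, while $d>0$; this gives \eqref{M}. \emph{The one non-routine step} is producing the low-energy end point: it is precisely here that the hypothesis $b<S^{-2}$ is used, through a Sobolev-near-optimizer, to ensure negativity of the quartic coefficient $b|\nabla v_\varepsilon|_2^4-|v_\varepsilon|_4^4$ under the scaling $u\mapsto t^2 u(t\,\cdot)$, which drives $I(\phi_t)\to-\infty$; once this is in place, the barrier argument is an elementary continuity-in-$|\nabla\cdot|_2$ consequence of Lemma~\ref{lem3.1}.
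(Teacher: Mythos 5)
Your argument is correct and follows essentially the same route as the paper's proof: take $d$ to be (a lower bound for) $\inf_{\partial A_{k_0}(c)}I>0$ from Lemma \ref{lem3.3}, observe that $\gamma(0)=\bar u_c\in A_{k_0}(c)$ while $I(\gamma(1))<2m(c)$ forces $\gamma(1)\notin\overline{A_{k_0}(c)}$, and conclude by continuity that every admissible path crosses $\partial A_{k_0}(c)$. Your explicit verification that $\Gamma_c\neq\emptyset$ via the scaling $t^2v_\varepsilon(t\cdot)$ is a sensible addition the paper omits here (it only exhibits a concrete path later, in Lemma \ref{lem3.10}), but otherwise the two proofs coincide.
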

\begin{proof}
    Set that $d:=\inf_{u\in\partial(A_{k_0}(c)) }I(u)$. By \eqref{m}, we know that $d>0$. Let $\gamma\in\Gamma_c$ be arbitrary. By Lemma \ref{lem3.3}, one obtains that $\gamma(0)=\bar{u}_c\in (A_{k_0}(c))\setminus(\partial(A_{k_0}(c)))$ and $I(\gamma(1))<2m(c)<m(c)<0$. Using again \eqref{m}, we have $\gamma(1)\notin \overline{A_{k_0}(c)}$. Since $\gamma(t)$ is continuous on $[0,1]$, there exists a $t_0\in(0,1)$ such that $\gamma(t_0)\in\partial(A_{k_0}(c))$. Then, $\max_{t\in[0,1]}I(\gamma(t))\geq d>0$. Since $I(\gamma(0))=I(\bar{u}_c)<J(\bar{u}_c)=\bar{m}(c)<0$. Thus, \eqref{M} holds. The proof is completed.
\end{proof}
Similar to Lemma 3.11 in \cite{chen2024normalized}, we obtain the following lemma.
\begin{lemma}\label{ps}
    Assume that $0<b<S^{-2}$, $2<q<3$, $\mu>0$ and $c\in(0,c_0)$. Then there exists a sequence $\{u_n\}\in S_c$ such that \begin{equation}
        I(u_n)\to M(c)>0, \ \ I|'_{S_c}(u_n)\to 0 \ \ and \ \ P(u_n)\to 0.
    \end{equation}
\end{lemma}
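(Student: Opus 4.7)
The plan is to apply the now-standard Jeanjean trick (cf.\ \cite{bartsch2019multiple}) that lifts the problem to the product space $\mathbb{R}\times S_c$, so that a Palais--Smale sequence for the lifted functional automatically satisfies the Pohozaev identity in the limit. Concretely, I introduce the auxiliary functional
\begin{equation*}
\widetilde{I}:\mathbb{R}\times H^1(\mathbb R^4)\to\mathbb{R},\qquad \widetilde{I}(s,u):=I(s\ast u)=\tfrac{a}{2}e^{2s}|\nabla u|_2^2+\tfrac{b}{4}e^{4s}|\nabla u|_2^4-\tfrac{\mu}{q}e^{q\delta_q s}|u|_q^q-\tfrac{e^{4s}}{4}|u|_4^4,
\end{equation*}
which is of class $C^1$ on $\mathbb R\times H^1(\mathbb R^4)$. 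Restrict $\widetilde{I}$ to the $C^1$-submanifold $\mathbb{R}\times S_c$, and define the lifted minimax class
\begin{equation*}
\widetilde{\Gamma}_c:=\{\widetilde{\gamma}\in\mathcal{C}([0,1],\mathbb{R}\times S_c):\widetilde{\gamma}(0)=(0,\bar{u}_c),\ \widetilde{I}(\widetilde{\gamma}(1))<2m(c)\},\qquad \widetilde{M}(c):=\inf_{\widetilde{\gamma}\in\widetilde{\Gamma}_c}\max_{t\in[0,1]}\widetilde{I}(\widetilde{\gamma}(t)).
\end{equation*}
The natural map $\gamma\mapsto(0,\gamma)$ embeds $\Gamma_c$ into $\widetilde{\Gamma}_c$, so $\widetilde{M}(c)\le M(c)$; conversely, given $\widetilde{\gamma}=(s(\cdot),\gamma(\cdot))\in\widetilde{\Gamma}_c$, the path $t\mapsto s(t)\ast\gamma(t)$ lies in $\Gamma_c$ (using $\bar{u}_c=0\ast\bar{u}_c$ and $I(s(1)\ast\gamma(1))=\widetilde{I}(\widetilde{\gamma}(1))<2m(c)$) and realizes the same maximum, whence $M(c)\le\widetilde{M}(c)$. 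Thus $\widetilde{M}(c)=M(c)>0$ by Lemma \ref{mountainpass}.

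Next I would verify the minimax geometry on the lifted level: by the same argument as in Lemma \ref{mountainpass} applied pointwise (using that $|\nabla(s\ast u)|_2^2=e^{2s}|\nabla u|_2^2$ only changes $\Psi_u$ by a continuous reparametrization), every $\widetilde{\gamma}\in\widetilde{\Gamma}_c$ must cross the set $\{(s,u):|\nabla(s\ast u)|_2^2=k_0\}$, on which $\widetilde{I}\ge d>0=\max\{\widetilde{I}(\widetilde{\gamma}(0)),\widetilde{I}(\widetilde{\gamma}(1))\}$. Then I apply Ekeland's variational principle (or the deformation/minimax lemma of Ghoussoub, in the form of Proposition 2.2 of \cite{bartsch2019multiple}) on the complete $C^1$-manifold $\mathbb R\times S_c$ endowed with the product metric. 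This produces a sequence $(s_n,v_n)\in\mathbb R\times S_c$ such that
\begin{equation*}
\widetilde{I}(s_n,v_n)\to M(c),\qquad \partial_s\widetilde{I}(s_n,v_n)\to 0,\qquad \|\widetilde{I}\,'|_{\mathbb R\times S_c}(s_n,v_n)\|_{(T_{(s_n,v_n)}(\mathbb R\times S_c))^*}\to 0.
\end{equation*}

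Finally, set $u_n:=s_n\ast v_n\in S_c$. The identity $\widetilde{I}(s_n,v_n)=I(u_n)$ yields $I(u_n)\to M(c)$. Direct differentiation gives
\begin{equation*}
\partial_s\widetilde{I}(s_n,v_n)=ae^{2s_n}|\nabla v_n|_2^2+be^{4s_n}|\nabla v_n|_2^4-\mu\delta_q e^{q\delta_q s_n}|v_n|_q^q-e^{4s_n}|v_n|_4^4=P(u_n),
\end{equation*}
so the $s$-derivative condition is exactly $P(u_n)\to 0$. For the tangential derivative I use the fact that the scaling $u\mapsto s\ast u$ is an isometry of $S_c$ (it preserves the $L^2$-norm) and conjugates $\widetilde{I}\,'|_{\{s\}\times S_c}$ with $I'|_{S_c}$ at the point $u_n$; more precisely, for any $\varphi\in T_{u_n}S_c$ one has $\langle I'|_{S_c}(u_n),\varphi\rangle=\langle\widetilde{I}\,'|_{\{s_n\}\times S_c}(s_n,v_n),\varphi(\cdot/e^{s_n})e^{-2s_n}\rangle$ with the corresponding norms controlled by a factor bounded in terms of $s_n$. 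Hence, provided $\{s_n\}$ stays bounded, $\|I'|_{S_c}(u_n)\|\to 0$ as required.

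The one delicate step is therefore showing $\{s_n\}$ is bounded; this is the main obstacle. I would obtain this from a standard comparison argument: since $I(u_n)\to M(c)$ is finite and $P(u_n)\to 0$, the identity
\begin{equation*}
I(u_n)-\tfrac{1}{4}P(u_n)=\tfrac{a}{4}|\nabla u_n|_2^2+\mu\bigl(\tfrac{\delta_q}{4}-\tfrac{1}{q}\bigr)|u_n|_q^q
\end{equation*}
combined with the Gagliardo--Nirenberg inequality forces $|\nabla u_n|_2$ to be bounded from above, and the nontriviality of $M(c)>0$ together with $P(u_n)\to 0$ prevents $|\nabla u_n|_2\to 0$; since $|\nabla u_n|_2=e^{s_n}|\nabla v_n|_2$ and $v_n$ can be chosen to satisfy $|\nabla v_n|_2$ bounded and bounded away from $0$ (this is the only nontrivial normalization to maintain in the application of the minimax lemma, and can be arranged by selecting the minimizing sequence $\widetilde{\gamma}_n$ to have uniformly bounded gradients), we conclude $s_n$ is bounded. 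With this, $u_n$ is the required Palais--Smale sequence for $I|_{S_c}$ at level $M(c)$ with $P(u_n)\to 0$.
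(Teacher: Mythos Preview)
Your approach is exactly the standard one the paper defers to (it cites Lemma 3.11 of \cite{chen2024normalized}, which in turn is the Jeanjean lifting argument you outline, cf.\ \cite{bartsch2019multiple}). The identification $\widetilde{M}(c)=M(c)$, the reading of $\partial_s\widetilde{I}=P$, and the conjugation of tangential derivatives under $s\ast(\cdot)$ are all correct.

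The one point that is not quite closed is the boundedness of $\{s_n\}$. Your argument tries to deduce it from two-sided bounds on $|\nabla u_n|_2$ \emph{and} on $|\nabla v_n|_2$; the first follows cleanly from $I(u_n)-\tfrac14 P(u_n)$ and Gagliardo--Nirenberg, but the claim that ``$v_n$ can be chosen with $|\nabla v_n|_2$ bounded and bounded away from $0$'' is not justified---you invoke control over almost-optimal paths, which is precisely the localization feature of the minimax principle you have not stated. In the standard implementation this step is handled differently and more directly: one takes almost-optimal paths of the form $\widetilde{\gamma}_n=(0,\gamma_n)$ with $\gamma_n\in\Gamma_c$, and the minimax principle with localization (Ghoussoub's version, as in \cite{bartsch2019multiple}) yields $(s_n,v_n)$ with $\mathrm{dist}_{\mathbb R\times S_c}\bigl((s_n,v_n),\{0\}\times\gamma_n([0,1])\bigr)\to 0$, hence $s_n\to 0$ immediately. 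With $s_n\to 0$ the norm comparison $\|(-s_n)\ast\varphi\|_{H^1}\le C\|\varphi\|_{H^1}$ is automatic and $I'|_{S_c}(u_n)\to 0$ follows. Replacing your last paragraph with this localization step closes the argument.
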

Now, we define the functions $U_n(x):=T_n(|x|)$, where \begin{equation}\label{T}
    T_n(r)=2\sqrt2\begin{cases}
		\sqrt{\frac{n}{1+n^2r^2}}, &0\leq r<1;\\
		\sqrt{\frac{n}{1+n^2}}(2-r),&1\leq r<2;\\
        0,&r\geq2.
	\end{cases}
\end{equation}
From \cite{chen2024another}, we have
\begin{equation}\label{U2}
    |U_n|_2^2=O\left(\frac{\log(1+n^2)}{n^2}\right), \ \ n\to \infty,
\end{equation}
\begin{equation}\label{nabla_U2}
    |\nabla U_n|_2^2=S^2+O\left(\frac{1}{n^2}\right), \ \ n\to \infty,
\end{equation}
\begin{equation}\label{U4}
    |U_n|_4^4=S^2+O\left(\frac{1}{n^4}\right), \ \ n\to \infty.
\end{equation}
In the following, we consider the superposition of $\bar{u}$ and $U_n$ to construct a test function in $S_c$. By some delicate calculations, we obtain the threshold of mountain pass level $M(c)$ as follows, which is a crucial step in proving Theorem \ref{th7}.
\begin{lemma}\label{lem3.10}
     Assume that $0<b<S^{-2}$, $2<q<3$, $\mu>0$ and $c\in(0,c_0)$. It holds that \begin{equation}\label{Mm}
         M(c)<\bar{m}(c)+\Lambda.
     \end{equation}
\end{lemma}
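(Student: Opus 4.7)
The plan is to construct an admissible path $\gamma_n\in\Gamma_c$ by superposing the local minimizer $\bar u_c$ of $J$ with a dilated and translated copy of the truncated Aubin--Talenti bubble $U_n$ from \eqref{T}, and to bound $\max_{t\in[0,1]} I(\gamma_n(t))$ from above by $\bar m(c)+\Lambda$ with a strict deficit. The critical ingredient is an algebraic cancellation between the cross term produced by the nonlocal Kirchhoff nonlinearity $\frac b4|\nabla u|_2^4$ and the difference $J(\bar u_c)-I(\bar u_c)$, which explains why the threshold has the specific form $\bar m(c)+\Lambda$.

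For a sequence $|y_n|\to\infty$ and parameters $(s,\tau)\in\mathbb{R}\times[0,\infty)$, I would consider the two-parameter family
\begin{equation*}
\Phi_{n,s,\tau}(x):=\bar u_c(x)+\tau\,(s\ast U_n)(x-y_n),
\end{equation*}
where $s\ast$ denotes the $L^2$-preserving dilation of Section 2. Since $\bar u_c\in H^1(\mathbb{R}^4)\cap L^\infty(\mathbb{R}^4)$ decays at infinity (by the Brezis--Kato bootstrap applied to \eqref{new_equation}), all cross integrals between $\bar u_c$ and $(s\ast U_n)(\cdot-y_n)$ in the norms $|\,\cdot\,|_2^2$, $|\nabla\,\cdot\,|_2^2$, $|\,\cdot\,|_q^q$ and $|\,\cdot\,|_4^4$ tend to zero once $|y_n|$ is chosen large compared to the bubble's scale $e^{-s}$. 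The path is then defined by $\gamma_n(t):=\Pi\bigl(\Phi_{n,s(t),\tau(t)}\bigr)$, where $\Pi(u)=\sqrt c\,u/|u|_2$ projects onto $S_c$ and $(s(t),\tau(t))$ is a continuous curve starting at $(s_0,0)$ (so that $\gamma_n(0)=\bar u_c$) and terminating at $(s_1,\tau_1)$ chosen so that $I(\gamma_n(1))<2m(c)$; the latter is achievable because $I\bigl(\tau(s\ast U_n)(\cdot-y_n)\bigr)\to-\infty$ as $\tau\to\infty$ whenever $|U_n|_4^4/|\nabla U_n|_2^4>b$, which holds for $n$ large by \eqref{nabla_U2}--\eqref{U4} under the standing assumption $b<1/S^2$.

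The decisive step is the expansion
\begin{equation*}
\frac b4\bigl|\nabla\Phi_{n,s,\tau}\bigr|_2^4=\frac b4|\nabla\bar u_c|_2^4+\frac b2|\nabla\bar u_c|_2^2\,\sigma+\frac b4\sigma^2+o_n(1),
\end{equation*}
where $\sigma:=\tau^2|\nabla(s\ast U_n)|_2^2=\tau^2 e^{2s}|\nabla U_n|_2^2$, whose middle summand is the genuine nonlocal cross term. Combining this with the asymptotically additive expansions of $|\Phi_{n,s,\tau}|_4^4$, $|\Phi_{n,s,\tau}|_q^q$ and $|\nabla\Phi_{n,s,\tau}|_2^2$, and using $|U_n|_4^4/|\nabla U_n|_2^4\to 1/S^2$ from \eqref{nabla_U2}--\eqref{U4}, yields
\begin{equation*}
I(\Phi_{n,s,\tau})=I(\bar u_c)+\frac{a+b|\nabla\bar u_c|_2^2}{2}\sigma-\frac{1-bS^2}{4S^2}\sigma^2-\frac{\mu\tau^q}{q}|s\ast U_n|_q^q+o_n(1).
\end{equation*}
The quadratic in $\sigma$ attains its maximum at $\sigma_\ast=(a+b|\nabla\bar u_c|_2^2)S^2/(1-bS^2)$, and using the identity
$J(u)-I(u)=\frac{abS^2}{2(1-bS^2)}|\nabla u|_2^2+\frac{b^2S^2}{4(1-bS^2)}|\nabla u|_2^4$
together with a binomial expansion of $(a+b|\nabla\bar u_c|_2^2)^2$ gives
\begin{equation*}
I(\bar u_c)+\frac{(a+b|\nabla\bar u_c|_2^2)^2 S^2}{4(1-bS^2)}=J(\bar u_c)+\Lambda=\bar m(c)+\Lambda,
\end{equation*}
the cross term absorbing exactly the $J$-$I$ discrepancy.

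The main obstacle is upgrading the resulting bound $\sup_{s,\tau}I(\Phi_{n,s,\tau})\le\bar m(c)+\Lambda+o_n(1)$ into the strict inequality \eqref{Mm}. The required negative deficit must come from the strictly negative term $-\frac{\mu\tau^q}{q}|s\ast U_n|_q^q=-\frac{\mu}{q}\tau^q e^{(2q-4)s}|U_n|_q^q$, whose magnitude at the maximizer (using $\tau e^s\sim\sqrt{\sigma_\ast}/S$) is of order $e^{(q-4)s}|U_n|_q^q$. Since a direct computation with \eqref{T} gives $|U_n|_q^q\sim n^{-q/2}\to 0$, the ``delicate calculation'' anticipated in Remark \ref{rem1.4} consists of choosing $s_n\to-\infty$ along $e^{-s_n}\sim n^{q/(2(4-q))}$ so that $e^{(q-4)s_n}|U_n|_q^q$ stays bounded below by a positive constant (this uses $2<q<3<4$), and then taking $|y_n|\gg e^{-s_n}$ so all interaction error terms remain smaller than this uniform deficit. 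With the $L^2$-projection $\Pi$ acting as a near-identity (because $|\Phi_{n,s_n,\tau_n}|_2^2=c+o_n(1)$), one concludes that $\sup_{t\in[0,1]}I(\gamma_n(t))\le\bar m(c)+\Lambda-\delta$ for some $\delta>0$ and all sufficiently large $n$, proving \eqref{Mm}.
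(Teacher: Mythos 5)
Your overall architecture (superpose $\bar u_c$ with a bubble and use the algebraic identity $I(\bar u_c)+\frac{(a+b|\nabla\bar u_c|_2^2)^2S^2}{4(1-bS^2)}=J(\bar u_c)+\Lambda$, so that the Kirchhoff cross term $\frac b2|\nabla\bar u_c|_2^2\sigma$ exactly absorbs the $J$--$I$ discrepancy) is sound and is the same mechanism as the paper's \eqref{t_*-inequality}. But your source of the \emph{strict} inequality is different from the paper's and, as specified, it does not close. By sending $|y_n|\to\infty$ you deliberately kill every interaction between $\bar u_c$ and the bubble --- in particular the term $\int_{\mathbb R^4}\bar u_cU_n^3\,dx\gtrsim 1/n$, which is precisely what the paper uses (via \eqref{uU3}) to produce the deficit $-B_0t^3/n$. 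You propose to replace it by the subcritical term $-\frac\mu q\tau^q|s\ast U_n|_q^q$ and to make it uniformly bounded below by dilating with $e^{-s_n}\sim n^{q/(2(4-q))}$. The gap is in the accompanying assertion that $|\Phi_{n,s_n,\tau_n}|_2^2=c+o_n(1)$ so that $\Pi$ is a near-identity: at the relevant maximizer one has $\tau_n\sim Ce^{-s_n}$, hence
\begin{equation*}
\tau_n^2|U_n|_2^2\sim e^{-2s_n}\,\frac{\log(1+n^2)}{n^2}\sim n^{\frac{3q-8}{4-q}}\log n,
\end{equation*}
which tends to $+\infty$ for every $q\in[8/3,3)$. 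For such $q$ the added mass blows up, $\Pi$ rescales the competitor by a factor tending to $0$, and the entire expansion of $I(\gamma_n(t))$ (including the identification of the maximal value with $\bar m(c)+\Lambda$) collapses. You never verify the mass claim, and it is false with your parameters on a full subinterval of the admissible range of $q$.

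The construction can be repaired, but only by abandoning the demand for a deficit that is uniform in $n$: taking $s_n\equiv 0$ gives a deficit of order $|U_n|_q^q\sim n^{-q/2}$, which (since $q<4$) still dominates both the $O(n^{-2})$ errors coming from \eqref{nabla_U2}--\eqref{U4} and the mass perturbation $O(\log n/n^2)$; a deficit merely tending to $0$ is all that is needed, exactly as in the paper's bound $\bar m(c)+\Lambda-C/n$. Two further points of contrast with the paper's proof: the paper normalizes by the dilation $W_{n,t}=\tau[\bar u_c(\tau x)+tU_n(\tau x)]$, which in dimension $4$ leaves $|\nabla\cdot|_2^2$ and $|\cdot|_4^4$ invariant and perturbs only the $q$-norm by a factor $\tau^{q-4}=1+O(\sqrt{\log(1+n^2)}/n)$, whereas your multiplicative projection $\Pi(u)=\sqrt c\,u/|u|_2$ perturbs all four terms of $I$ and you do not track this; and the paper uses the Euler--Lagrange equation \eqref{laga} of $\bar u_c$ to convert the gradient cross term $\int_{\mathbb R^4}\nabla\bar u_c\nabla U_n\,dx$ into lower-order quantities, a step you avoid entirely by translating the bubble away --- which is legitimate, but then the strict inequality must come from somewhere else, and that is exactly where your argument breaks.
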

\begin{proof}
    Let $\bar{u}_c\in S_c$ is given by Theorem \ref{th6} $(ii)$. Then by Theorem \ref{th6} $(ii)$  and Lemma \ref{new_Pohozaev}, one obtains that \begin{equation}\label{uc}
        |\bar{u}_c|_2^2=c,\ \ J(\bar{u}_c)=\bar{m}(c), \ \ \bar{\lambda}_c|\bar{u}_c|_2^2=\frac{\mu(4-q)}{q}|\bar{u}_c|_q^q,\ \ \bar{u}_c(x)>0,\ \ \forall \ x\in \mathbb R^4
    \end{equation}and
    \begin{equation}\label{laga}
        \left [ \frac{a}{(1-bS^2)}+\frac{b}{(1-bS^2)}|\nabla \bar{u}_c|_2^2\right ]\int_{\mathbb R^4} \nabla\bar{u}_c\nabla U_n dx=\int_{\mathbb R^4}(\bar{u}_c^3+\mu\bar{u}_c^{q-1}-\bar{\lambda}_c\bar{u}_c)U_n dx.
    \end{equation} Set that $B:=\inf_{|x|\leq1}\bar{u}_c(x)$. Then $B>0$. Together with \eqref{T}, \eqref{U2} and \eqref{laga}, we can deduce that
    \begin{equation}\label{uU}
        \int_{\mathbb R^4}\bar{u}_cU_ndx=O\left(\frac{\sqrt{\log(1+n^2)}}{n}\right), \ \ n\to\infty,
    \end{equation}
    \begin{equation}\label{nunU}
        \left | \int_{\mathbb R^4}\nabla\bar{u}_c\nabla U_ndx\right |=\left | \int_{\mathbb R^4}U_n\Delta \bar{u}_cdx\right |=O\left(\frac{\sqrt{\log(1+n^2)}}{n}\right), \ \ n\to\infty,
    \end{equation}
    \begin{equation}\label{uqU}
        \int_{\mathbb R^4}\bar{u}_c^{q-1}U_ndx\leq\left [\int_{\mathbb R^4}\bar{u}_c^{2(q-1)}dx\int_{|x|\leq2}U_n^2dx\right]^{\frac{1}{2}} =O\left(\frac{\sqrt{\log(1+n^2)}}{n}\right), \ \ n\to\infty,
    \end{equation}and
    \begin{equation}\label{uU3}
    \begin{aligned}
        \int_{\mathbb R^4}\bar{u}_cU_n^3dx&\geq2\pi^2 B\int_0^1r^3|T_n(r)|^{3}dr\\&\geq32\sqrt{2}\pi^2B\int_0^1\frac{n^3r^3}{(1+n^2r^2)^3}dr\\&\geq\frac{32\sqrt{2}\pi^2B}{n}\int_0^n\frac{s^3}{(1+s^2)^3}ds:=\frac{B_0}{n}.
    \end{aligned}
    \end{equation}
    By \eqref{U2} and \eqref{uc}, we have \begin{equation}\label{uc+Un}
    \begin{aligned}
         |\bar{u}_c+tU_n|_2^2&=c+t^2|U_n|_2^2+2t\int_{\mathbb R^4}\bar{u}_cU_ndx\\&=c+2t\int_{\mathbb R^4}\bar{u}_cU_ndx+t^2\left[O\left(\frac{\log(1+n^2)}{n^2}\right)\right], \ \ n\to\infty.
    \end{aligned}
    \end{equation}Let $\tau=\tau_{n,t}:=\frac{|\bar{u}_c+tU_n|_2}{\sqrt{c}}$. Then
    \begin{equation}\label{tau}
        \tau^2=1+\frac{2t}{c}\int_{\mathbb R^4}\bar{u}_cU_ndx+t^2\left[O\left(\frac{\log(1+n^2)}{n^2}\right)\right], \ \ n\to\infty.
    \end{equation} Now, we define \begin{equation}\label{W}
        W_{n,t}:=\tau\left[\bar{u}_c(\tau x)+tU_n(\tau x)\right].
    \end{equation}Then we have \begin{equation}\label{nW}
        |\nabla W_{n,t}|_2^2=|\nabla(\bar{u}_c+tU_n)|_2^2,\ \ | W_{n,t}|_4^4=|\bar{u}_c+tU_n|_4^4
    \end{equation}and \begin{equation}\label{Wq}
        |W_{n,t}|_2^2=\tau^{-2}|\bar{u}_c+tU_n|_2^2=c,\ \ |W_{n,t}|_q^q=\tau^{q-4}|\bar{u}_c+tU_n|_q^q.
    \end{equation}Set that \begin{equation}\label{t*}
        t_*^2:=\frac{a+b|\nabla\bar{u}_c|^2_2}{1-bS^2}.
    \end{equation}Then \eqref{laga} can be rewritten as \begin{equation}
        (a+b|\nabla\bar{u}_c|^2_2+bS^2t_*^2)\int_{\mathbb R^4}\nabla\bar{u}_c\nabla U_ndx=\int_{\mathbb R^4}(\bar{u}_c^3+\mu|\bar{u}_c|^{q-2}\bar{u}_c-\bar{\lambda}_c\bar{u}_c)U_ndx.
    \end{equation} It follows from \eqref{t*} that
    \begin{equation}\label{t_*-inequality}
    \begin{aligned}
          &S^2\left[\frac{a+b|\nabla\bar{u}_c|^2_2}{2}t^2-\frac{1-bS^2}{4}t^4\right]<S^2\left[\frac{a+b|\nabla\bar{u}_c|^2_2}{2}t_*^2-\frac{1-bS^2}{4}t_*^4\right]\\
          &=S^2\left[\frac{(a+b|\nabla\bar{u}_c|^2_2)^2}{4(1-bS^2)}\right]=\frac{a^2S^2}{4(1-bS^2)}+\frac{abS^2|\nabla\bar{u}_c|^2_2}{2(1-bS^2)}+\frac{b^2S^2|\nabla\bar{u}_c|^4_2}{4(1-bS^2)}\\
          &=\Lambda+\frac{abS^2|\nabla\bar{u}_c|^2_2}{2(1-bS^2)}+\frac{b^2S^2|\nabla\bar{u}_c|^4_2}{4(1-bS^2)},\ \ \forall \ t\in(0,t_*)\cup(t_*,+\infty).
    \end{aligned}
    \end{equation}
    By direct calculations, we can check that \begin{equation}\label{eq1}
        (1+t)^p\geq 1+pt+pt^{p-1}+t^p,\ \ \forall \ p\geq3,\ t\geq0
    \end{equation}
    and \begin{equation}\label{eq2}
        (1+t)^p\geq 1+pt^{p-1}+t^p,\ \ \forall \ p\geq2,\ t\geq0.
    \end{equation}
    From \eqref{U2}-\eqref{U4}, \eqref{uc} and \eqref{uU}-\eqref{eq2}, we have \begin{equation}
    \begin{aligned}
        I(W_{n,t})&=\frac{a}{2}|\nabla W_{n,t}|_2^2+\frac{b}{4}|\nabla W_{n,t}|_2^4-\frac{1}{4}|W_{n,t}|_4^4-\frac{\mu}{q}|W_{n,t}|_q^q\\&=\frac{a}{2}|\nabla(\bar{u}_c+tU_n)|_2^2+\frac{b}{4}|\nabla(\bar{u}_c+tU_n)|_2^4-\frac{1}{4}|\bar{u}_c+tU_n|_4^4-\frac{\mu\tau^{q-4}}{q}|\bar{u}_c+tU_n|_q^q\\&\leq\frac{a}{2}|\nabla\bar{u}_c|_2^2+\frac{b}{4}|\nabla\bar{u}_c|_2^4-\frac{1}{4}|\bar{u}_c|^4_4-\frac{\mu\tau^{q-4}}{q}|\bar{u}_c|_q^q+\frac{at^2}{2}|\nabla U_n|_2^2+\frac{bt^4}{4}|\nabla U_n|_2^4\\&-\frac{t^4}{4}|U_n|_4^4-t\int_{\mathbb R^4}\bar{u}_c^3U_ndx-t^3\int_{\mathbb R^4}\bar{u}_cU_n^3dx-\mu\tau^{q-4}t\int_{\mathbb R^4}\bar{u}_c^{q-1}U_ndx+\frac{bt^2}{2}|\nabla \bar{u}_c|_2^2|\nabla U_n|_2^2\\&+\left(a+b|\nabla \bar{u}_c|_2^2+bt^2|\nabla U_n|_2^2\right)t\int_{\mathbb R^4}\nabla\bar{u}_c\nabla U_ndx+bt^2\left(\int_{\mathbb R^4}\nabla\bar{u}_c\nabla U_ndx\right)^2\\&=\frac{a}{2}|\nabla\bar{u}_c|_2^2+\frac{b}{4}|\nabla\bar{u}_c|_2^4-\frac{1}{4}|\bar{u}_c|^4_4-\frac{\mu}{q}|\bar{u}_c|_q^q+\frac{at^2}{2}|\nabla U_n|_2^2+\frac{bt^4}{4}|\nabla U_n|_2^4-\frac{t^4}{4}|U_n|_4^4\\&+\frac{\mu(1-\tau^{q-4})}{q}|\bar{u}_c|_q^q+\mu(1-\tau^{q-4})t\int_{\mathbb R^4}\bar{u}_c^{q-1}U_ndx-\bar{\lambda}_ct\int_{\mathbb R^4}\bar{u}_cU_ndx\\&+\frac{bt^2}{2}|\nabla \bar{u}_c|_2^2|\nabla U_n|_2^2+b(t^2|\nabla U_n|_2^2-t_*^2S^2)t\int_{\mathbb R^4}\nabla\bar{u}_c\nabla U_ndx\\&-t^3\int_{\mathbb R^4}\bar{u}_cU_n^3dx+t^2\left[O\left(\frac{\log(1+n^2)}{n^2}\right)\right]\\&=\frac{a}{2}|\nabla\bar{u}_c|_2^2+\frac{b}{4}|\nabla\bar{u}_c|_2^4-\frac{1}{4}|\bar{u}_c|^4_4-\frac{\mu}{q}|\bar{u}_c|_q^q+S^2\left[\frac{a+b|\nabla \bar{u}_c|_2^2}{2}t^2-\frac{1-bS^2}{4}t^4\right]\\&+\frac{\mu|\bar{u}_c|_q^q}{q} \left\{1-\left[1+\frac{2t}{c}\int_{\mathbb R^4}\bar{u}_cU_ndx+t^2\left(O\left(\frac{\log(1+n^2)}{n^2}\right)\right)\right]^{\frac{q-4}{2}}\right\}-\bar{\lambda}_ct\int_{\mathbb R^4}\bar{u}_cU_ndx\\&+\mu \left\{1-\left[1+\frac{2t}{c}\int_{\mathbb R^4}\bar{u}_cU_ndx+t^2\left(O\left(\frac{\log(1+n^2)}{n^2}\right)\right)\right]^{\frac{q-4}{2}}\right\}t\int_{\mathbb R^4}\bar{u}_c^{q-1}U_ndx\\&+bS^2(t^2-t_*^2)t\int_{\mathbb R^4}\nabla\bar{u}_c\nabla U_ndx-t^3\int_{\mathbb R^4}\bar{u}_cU_n^3dx+(t^2+t^4)\left[O\left(\frac{\log(1+n^2)}{n^2}\right)\right]\\&\leq\frac{a}{2}|\nabla\bar{u}_c|_2^2+\frac{b}{4}|\nabla\bar{u}_c|_2^4-\frac{1}{4}|\bar{u}_c|^4_4-\frac{\mu}{q}|\bar{u}_c|_q^q+S^2\left[\frac{a+b|\nabla \bar{u}_c|_2^2}{2}t^2-\frac{1-bS^2}{4}t^4\right]\\&-\frac{B_0t^3}{n}+bS^2(t^2-t_*^2)t\left[O\left(\frac{\sqrt{\log(1+n^2)}}{n}\right)\right]+(t^2+t^4)\left[O\left(\frac{\log(1+n^2)}{n^2}\right)\right]\\&\leq\frac{a}{2}|\nabla\bar{u}_c|_2^2+\frac{b}{4}|\nabla\bar{u}_c|_2^4-\frac{1}{4}|\bar{u}_c|^4_4-\frac{\mu}{q}|\bar{u}_c|_q^q+\Lambda+\frac{abS^2|\nabla\bar{u}_c|^2_2}{2(1-bS^2)}+\frac{b^2S^2|\nabla\bar{u}_c|^4_2}{4(1-bS^2)}-\frac{C}{n}\\&=J(\bar{u}_c)+\Lambda-\frac{C}{n}=\bar{m}(c)+\Lambda-\frac{C}{n}, \ \ \forall \ t>0,
    \end{aligned}
    \end{equation} which implies that there exists $\bar{n}\in \mathbb N$ such that \begin{equation}\label{sup}
        \sup_{t>0}J(W_{\bar{n},t})<\bar{m}(c)+\Lambda.
    \end{equation}Then, by \eqref{uc+Un}, \eqref{W}, \eqref{nW} and \eqref{Wq}, we have \begin{equation*}
    W_{\bar{n},t}:=\bar{\tau}\left[\bar{u}_c(\bar{\tau} x)+tU_{\bar{n}}(\bar{\tau} x)\right],\ \  |W_{\bar{n},t}|_2^2=c
    \end{equation*}and \begin{equation*}
        \begin{aligned}
            |\nabla W_{\bar{n},t}|_2^2&=|\nabla(\bar{u}_c+tU_{\bar{n}})|_2^2\\&=|\nabla \bar{u}_c|_2^2+t^2|\nabla U_{\bar{n}}|_2^2+2t\int_{\mathbb R^4}\nabla\bar{u}_c\nabla U_{\bar{n}}dx,
        \end{aligned}
    \end{equation*}where $$\bar{\tau}^2=1+\frac{2t}{c}\int_{\mathbb R^4}\bar{u}_c U_{\bar{n}}dx+t^2|U_{\bar{n}}|_2^2,$$which implies that $W_{\bar{n},t}\in S_c$ for all $t>0$, $W_{\bar{n},0}=\bar{u}_c$, and there exists a $\bar{t}>0$  large enough such that $I(W_{\bar{n},\bar{t}})<2m(c)$. Let $\gamma_{\bar{n}}(t):=W_{\bar{n},t\bar{t}}$. Then we can check that $\gamma_{\bar{n}}\in \Gamma$ defined in \eqref{mountainpass}. Thus, it follows from \eqref{M} and \eqref{sup} that \eqref{Mm} holds. The proof is completed.
\end{proof}
\begin{proof}[The proof of Theorem \ref{th7}]
    It follows from Lemma \ref{ps} there exists a sequence $\{u_n\}\subset S_c$ such that \begin{equation}\label{ps_un}
        I(u_n)\to M(c),\ \ I|'_{S_c}(u_n)\to0\ \ {\rm and} \ \ P(u_n)\to0,\ \ {\rm as} \ \ n\to\infty.
    \end{equation} Using \eqref{G-N} and \eqref{ps_un}, one obtains that \begin{equation*}
    \begin{aligned}
        M(c)+o(1)&=I(u_n)-\frac{1}{4}P(u_n)\\&=\frac{a}{4}|\nabla u_n|_2^2-\frac{\mu(4-q)}{2q}|u_n|_q^q\\&\geq\frac{a}{4}|\nabla u_n|_2^2-\frac{\mu(4-q)}{2q}C_q^qc^{\frac{4-q}{2}}|\nabla u_n|_2^{2q-4},
    \end{aligned}
    \end{equation*} which implies that $\{u_n\}$ is bounded in  $ H_{rad}^1(\mathbb R^4)$ due to $2<q<3$. Then, we can assume that there exists a $\hat u\in H_{rad}^1(\mathbb R^4)$ such that, up to a subsequence, \begin{equation}
    \begin{cases}
		u_n\rightharpoonup \hat{u} &{\rm in} \ \ H_{rad}^1(\mathbb R^4);\\
		u_n\to\hat{u} &{\rm in} \ \ L^s(\mathbb R^4), \ \ \forall \ s\in(2,4);\\
        u_n(x)\to\hat{u}(x), &{\rm a.e. \ on} \ \ \mathbb R^4.
	\end{cases}
    \end{equation} By \eqref{ps_un} and the Lagrange multipliers rule, there exists a sequence $\{\lambda_n\}\in \mathbb R$ such that \begin{equation}\label{weak}
    \begin{split}
        a\int_{\mathbb R^4}\nabla u_n \nabla \varphi dx &+b|\nabla u_n|_2^2\int_{\mathbb R^4}\nabla u_n \nabla \varphi dx-\int_{\mathbb R^4}|u_n|^2u_n\varphi dx-\mu\int_{\mathbb R^4}|u_n|^{q-2}u_n \varphi dx\\ &+\int_{\mathbb R^4}\lambda_n u_n\varphi dx=o(1)||\varphi||, \ \ \forall \ \varphi\in H_{rad}^1(\mathbb R^4).
    \end{split}
    \end{equation}In particular, taking $\varphi=u_n$ in \eqref{weak}, then we have \begin{equation}\label{lam_sequence}
        -\lambda_nc=a|\nabla u_n|_2^2+b|\nabla u_n|_2^4-|u_n|_4^4-\mu|u_n|_q^q.
    \end{equation}Since $\{u_n\}$ is bounded in $H_{rad}^1(\mathbb R^4)$, we know that $\{\lambda_n\}$ is bounded in $\mathbb R$. Thus, there exists a $\hat{\lambda}_c\in \mathbb R$ such that, up to a subsequence, $\lambda_n\to \hat{\lambda}_c$ as $n\to \infty$.

    Now,  we claim that $\hat{u}\neq0$. Indeed, if $\hat{u}=0$, then we have $u_n\to0$ in $L^q(\mathbb R^4)$. Then, it follows from \eqref{G-N} and \eqref{ps_un} that \begin{equation}
    \begin{split}
        o(1)&=a|\nabla u_n|_2^2+b|\nabla u_n|_2^4-|u_n|_4^4 \geq a|\nabla u_n|_2^2+(b-S^{-2})|\nabla u_n|_2^4.
    \end{split}
    \end{equation} Set that $A:=\lim_{n\to \infty}|\nabla u_n|_2^2\geq0$. Then we have  $aA+(b-S^{-2})A^2\leq0$, which indicates that either $A\geq\frac{aS^2}{1-bS^2}$ or $A=0$. If $A\geq\frac{aS^2}{1-bS^2}$, combining this with \eqref{ps_un}, we have \begin{equation*}
    \begin{aligned}
            M(c)+o(1)&=I(u_n)-\frac{1}{4}P(u_n)=\frac{a}{4}|\nabla u_n|_2^2\geq \frac{aS^2}{4(1-bS^2)}=\Lambda,
    \end{aligned}
    \end{equation*} which contradicts with \eqref{Mm}. If $A=0$, it follows from that \eqref{ps_un} that $M(c)=0$, which is impossible. Therefore, $\hat u\neq0$. Moreover, in view of \eqref{ps_un}, \eqref{lam_sequence} and $2<q<3$, we have \begin{equation*}
    \begin{aligned}
    \hat{\lambda}_cc=\lim_{n\to\infty}\lambda_nc&=\lim_{n\to\infty}-(a|\nabla u_n|_2^2+b|\nabla u_n|_2^4-|u_n|_4^4-\mu|u_n|_q^q)\\
    &=\lim_{n\to\infty}\mu(\frac{4}{q}-1)|u_n|_q^q=\mu(\frac{4}{q}-1)|\hat u|_q^q>0,
    \end{aligned}
    \end{equation*}which implies that $\hat{\lambda}_c>0$.

    Letting $n\to \infty$ in \eqref{weak}, we have \begin{equation}\label{Alam_sequence}
         a\int_{\mathbb R^4}\nabla \hat u\nabla \varphi dx +bA\int_{\mathbb R^4}\nabla \hat u\nabla \varphi dx+\int_{\mathbb R^4}\hat{\lambda}_c\hat u\varphi dx=\int_{\mathbb R^4}|\hat u|^2\hat u\varphi dx+\mu\int_{\mathbb R^4}|\hat u|^{q-2}\hat u \varphi dx,
    \end{equation} which means that $\hat u$ is a weak solution of the following equation
    \begin{equation*}
        -(a+bA)\Delta u+\hat{\lambda}_c u=\mu|u|^{q-2}u+|u|^3u \ \ {\rm in} \ \ \mathbb R^4.
    \end{equation*}
    Then, $\hat{u}$ satisfies the following Pohozaev identity\begin{equation}\label{Apohozaev}
    (a+bA)|\nabla\hat{u}|_2^2+2\hat{\lambda}_c|\hat{u}|_2^2-\frac{4\mu}{q}|\hat{u}|_q^q-|\hat{u}|_4^4.
    \end{equation}Let $\varphi=\hat{u} $ in \eqref{Alam_sequence}, together with \eqref{Apohozaev}, we can deduce that \begin{equation}\label{hP}
        \hat{P}(\hat u):=(a+bA)|\nabla\hat u|_2^2-|\hat u|_4^4-\frac{2\mu(q-2)}{q}|\hat u|_q^q=0.
    \end{equation}
    Set that $v_n:=u_n-\hat u$. By the Brezis-Lieb Lemma, we have \begin{equation}\label{BL}
        A=|\nabla u_n|_2^2+o(1)=|\nabla v_n|_2^2+|\nabla\hat u|_2^2+o(1) \ \ {\rm and} \ \ |u_n|_4^4=|v_n|_4^4+|\hat u|_4^4+o(1).
    \end{equation}
    It follows from \eqref{ps_un}, \eqref{hP} and \eqref{BL} that \begin{equation}\label{RP}
    \begin{aligned}
          o(1)=P(u_n)&=a|\nabla u_n|_2^2+b|\nabla u_n|_4^4-|u_n|^4_4-\frac{2\mu(q-2)}{q}|u_n|^q_q\\
          &=\hat{P}(\hat u)+(a+bA)|\nabla v_n|_2^2-|v_n|_4^4+o(1)=(a+bA)|\nabla v_n|_2^2-|v_n|_4^4+o(1)\\
          &=(a+b|\nabla \hat u|_2^2)|\nabla v_n|_2^2+b|\nabla v_n|_2^4-|v_n|_4^4+o(1),
    \end{aligned}
    \end{equation}
    together with \eqref{ps_un} and \eqref{BL}, one obtains that \begin{equation}\label{RM}
        \begin{aligned}
            M(c)+o(1)&=\frac{a}{2}|\nabla u_n|_2^2+\frac{b}{4}|\nabla u_n|_2^4-\frac{1}{4}|u_n|_4^4-\frac{\mu}{q}|u_n|_q^q\\&=\frac{a}{2}|\nabla v_n|_2^2+\frac{b}{4}|\nabla v_n|_2^4-\frac{1}{4}|v_n|_4^4+\frac{b}{2}|\nabla\hat{u}|_2^2|\nabla v_n|_2^2+I(\hat{u})+o(1)\\& =\frac{a}{4}|\nabla v_n|_2^2+\frac{b}{4}|\nabla\hat{u}|_2^2|\nabla v_n|_2^2+I(\hat{u})+o(1).       \end{aligned}
    \end{equation}
    Set that $l:=\lim_{n\to \infty}|\nabla v_n|_2^2\geq0$. Then it follow from \eqref{G-N} and \eqref{RP} that \begin{equation*}
    \begin{aligned}
        (a+b|\nabla \hat u|_2^2)l+bl^2&=\lim_{n\to\infty}(a+b|\nabla \hat u|_2^2)|\nabla v_n|_2^2+b|\nabla v_n|_2^4=\lim_{n\to\infty}|v_n|_4^4\\
        &\leq \lim_{n\to\infty}S^{-2}|\nabla v_n|_2^4=S^{-2}l^2,
    \end{aligned}
    \end{equation*}
    which indicates that either $l\geq\frac{(a+b|\nabla \hat u|_2^2)S^2}{1-bS^2}$ or $l=0$. If $l\geq\frac{(a+b|\nabla \hat u|_2^2)S^2}{1-bS^2}$, we consider the two cases: $|\nabla \hat{u}|_2^2<k_0$ and $|\nabla \hat{u}|_2^2\geq k_0$.

    $(i)$ If $|\nabla \hat{u}|_2^2<k_0$, by Lemmas \ref{lem3.3} and \ref{lem3.5}, we have $$J(\hat u)\geq \bar{m}(|\hat u|_2^2)\geq \bar{m}(c).$$ Combining this with \eqref{RM}, one obtains that \begin{equation*}
        \begin{aligned}
           M(c)+o(1)&=\frac{a}{4}|\nabla v_n|_2^2+\frac{b}{4}|\nabla\hat{u}|_2^2|\nabla v_n|_2^2+I(\hat{u})+o(1)\\&=\frac{a+b|\nabla\hat{u}|_2^2}{4}l+I(\hat{u})+o(1)\\&\geq \frac{(a+b|\nabla\hat{u}|_2^2)^2S^2}{4(1-bS^2)}+I(\hat{u})+o(1)\\&=\frac{a^2S^2}{4(1-bS^2)}+\frac{abS^2|\nabla \hat u|_2^2}{2(1-bS^2)}+\frac{b^2S^2|\nabla \hat u|_2^4}{4(1-bS^2)}+I(\hat{u})+o(1)\\&=\Lambda+J(\hat u)+o(1)\\&\geq \Lambda+\bar{m}(c)+o(1),
        \end{aligned}
    \end{equation*} which contradicts with \eqref{Mm}.

    $(ii)$ If $|\nabla \hat{u}|_2^2\geq k_0$, by \eqref{G-N},  \eqref{c1}, \eqref{hP}, \eqref{BL} and \eqref{RM}, one obtains that \begin{equation*}
        \begin{aligned}
             M(c)+o(1)&=\frac{a}{4}|\nabla v_n|_2^2+\frac{b}{4}|\nabla\hat{u}|_2^2|\nabla v_n|_2^2+I(\hat{u})-\frac{1}{4}\hat{P}(\hat{u})+o(1)\\&=\frac{a}{4}|\nabla\hat{u}|_2^2+\frac{a}{4}|\nabla v_n|_2^2-\frac{\mu(4-q)}{2q}|\hat u|_q^q+o(1)\\&=\frac{a}{4}|\nabla\hat{u}|_2^2+\frac{a}{4}l-\frac{\mu(4-q)}{2q}|\hat u|_q^q+o(1)\\&\geq \frac{a^2S^2}{4(1-bS^2)}+\frac{a}{4(1-bS^2)}|\nabla\hat{u}|_2^2-\frac{\mu(4-q)}{2q}|\hat u|_q^q+o(1)\\&\geq \frac{a^2S^2}{4(1-bS^2)}+\frac{a}{4(1-bS^2)}|\nabla\hat{u}|_2^2-\frac{\mu(4-q)}{2q}C_q^qc^{\frac{4-q}{2}}|\nabla\hat{u}|_2^{2q-4}+o(1)\\&= \frac{a^2S^2}{4(1-bS^2)}+\left[\frac{a}{4(1-bS^2)}|\nabla\hat{u}|_2^{6-2q}-\frac{\mu(4-q)}{2q}C_q^qc^{\frac{4-q}{2}}\right]|\nabla\hat{u}|_2^{2q-4}+o(1)\\&\geq\frac{a^2S^2}{4(1-bS^2)}+\left[\frac{a}{4(1-bS^2)}k_0^{3-q}-\frac{\mu(4-q)}{2q}C_q^qc^{\frac{4-q}{2}}\right]|\nabla\hat{u}|_2^{2q-4}+o(1)\\&\geq \Lambda+o(1),
        \end{aligned}
    \end{equation*} which also contradicts with \eqref{Mm}. Therefore, $l=0$ holds. Then we know that $|\nabla u_n|_2\to|\nabla\hat u|_2$ and $|u_n|_4\to|\hat u|_4$ as $n\to \infty$. Moreover, letting $\varphi=u_n-\hat{u}$ in \eqref{weak} and \eqref{Alam_sequence} respectively and subtracting them, we have $$(a+bA)|\nabla(u_n-\hat u)|_2^2-\lambda_c|u_n-\hat u|_2^2\to 0 \  {\rm as} \ \ n\to\infty,$$which implies that $|u_n|_2\to|\hat u|_2$ as $n\to\infty$. Thus, $u_n\to\hat u$ in $H^1_{rad}(\mathbb R^4)$. The proof is complete.
\end{proof}
\section*{}

\noindent {\bf Author's contributions:} All authors contributed equally to this work. Qilin Xie  and Jianshe Yu are the corresponding authors.
\smallskip

\bibliography{references}
\bibliographystyle{abbrv}
\end{document}